\documentclass[12pt,a4paper]{amsart}
\usepackage[dvips]{graphics}
\usepackage{amsmath, amsthm, amssymb, amsfonts}
\usepackage[cp 1250]{inputenc}
\usepackage[T1]{fontenc}
\usepackage[dvips]{graphicx}
\usepackage{amscd}
\usepackage{a4wide}

\newtheorem{thm}{\sc Theorem}[section]
\newtheorem{lemma}[thm]{\sc Lemma}

\newtheorem{rem}[thm]{\sc Remark}
\newtheorem{cor}[thm]{\sc Corollary}
\newtheorem{prop}[thm]{\sc Proposition}

\theoremstyle{definition} 
\newtheorem{defi}[thm]{\sc Definition}

\usepackage[all]{xy}
\newcommand{\tp}{\xymatrix{*+<.7ex>[o][F-]{\scriptstyle\top}}}

\newcommand{\widebar}[1]{\overline{#1\phantom{'}\!}}
%
\newcommand{\A}{\mathsf{A}}
\newcommand{\C}{\mathbb{C}}
\renewcommand{\i}{{\rm i}}
\newcommand{\I}{{\rm i}}
\newcommand{\id}{{\rm id}}
\newcommand{\Mor}{{\rm Mor}\,}
\newcommand{\one}{1_\A}

\newcommand{\e}{\varepsilon}

\newcommand{\G}{\mathbb{G}}

\newcommand{\N}{\mathbb{N}}

\newcommand{\R}{\mathbb{R}}
\newcommand{\T}{\mathbb{T}}

\newcommand{\co}[2]{c_{#1}^{(#2)}}

\setlength{\oddsidemargin}{0cm}  
\setlength{\evensidemargin}{0cm} 
\setlength{\topmargin}{-.2cm} \setlength{\textheight}{24cm}
\setlength{\textwidth}{16cm}

\begin{document}

\title[Woronowicz Construction for $N=3$]{Woronowicz construction of compact
quantum groups for functions on permutations. \\ Classification result for 
$N=3$}
 \author{Anna Kula}
 \date{\today}
\address{AK: Instytut Matematyczny, Uniwersytet Wroc{\l}awski,  pl. Grunwaldzki
2/4, 50-384 Wrocław, Poland. {\em On leave from} Insitute of
Mathematics, Jagiellonian University, ul. {\L}ojasiewicza 6, 
Krak{\'o}w, Poland.}
\email{anna.kula@math.uni.wroc.pl}
\subjclass[2010]{Primary: 
81R50, 
Secondary: 46L65,
18D10. 
}
\keywords{compact quantum groups, twisted determinant condition, intertwining
operators.}
\begin{abstract}
We provide a classification of compact quantum groups, which can be 
obtained by the Woronowicz construction, when the arrays used in the twisted
determinant condition are extensions of functions on permutations.
General properties of such quantum groups are revealed with the aid of operators
intertwining tensor powers of the fundamental corepresentation. Two new families
of quantum groups appear: three versions of the quantum group $SU_q(3)$ with
complex $q$ and a non-commutative analogue of the semi-direct product
of two-dimensional torus with the alternating group $A_3$. 
\end{abstract}
\maketitle

\section{Introduction}

The Woronowicz construction \cite{woronowicz88}
provides a very general and elegant method of creating compact quantum groups. 
According to this construction, the universal unital C${}^*$-algebra generated
by abstract elements making up a $N\times N$ unitary matrix $u$ 
and satisfying a modified (twisted) determinant
condition admits the quantum group structure. The modification of the
determinant condition depends on a choice of $N^N$ complex constants
$E_{i_1,i_2,\ldots, i_N}$, $i_k\in \{1,2,\ldots,N\}$, forming a 'nondegenerate
array'. The nondegeneracy condition still leaves a lot
of freedom to choose the constants--at least in theory. In
practice, it is extremely difficult to find an example which is non-trivial.
The latter means that the C$^*$-algebra is noncommutative
and thus the quantum group does not originate from any classical group. 

Until present there have been only few examples of quantum groups successfully
constructed with the aid of the Woronowicz result. These are: the twisted
special unitary group
$SU_q(N)$ \cite{woronowicz88}, the $q$-deformed unitary group $U_q(2)$ 
\cite{wysoczanski04} and the $q$-deformed special orthogonal group $SO_q(3)$
\cite{lance98}. Recently, the author showed that also a
twisted product of $SU_{-1}(2)$ with the two-dimensional torus
can be added to this list (see \cite{kula13}). 

On the other hand, the construction is fruitful in the sense that it helped
describing the quantum group structure. For instance, Wysocza\'{n}ski observed
that $U_q(2)$ is a twisted product of $SU_q(2)$ with the unit circle, while
Lance found a C$^*$-embedding of $SO_q(3)$ into $SU_{q^{1/2}}(2)$. 
Both results allowed to provide systematic descriptions of irreducible
representations of the quantum groups in question as well as the related Haar
states.

Several examples of objects coming from Woronowicz construction, which
are known, contrast vividly with the broad range of choice of the constants in
the array $E=[E_{i_1,i_2,\ldots,
i_N}]_{i_1,i_2,\ldots, i_N=1}^N$. It is thus natural to wonder what kind
of arrays $E$ lead--through the Woronowicz construction--to non-trivial quantum
groups. However, different arrays can lead to the same quantum
group (as will become clear from the results of this paper); therefore, it is
better to rephrase the question as follows: what are the non-trivial quantum
groups which can be obtained from the Woronowicz construction?  

One can observe that all the examples, except $SO_q(3)$, are related
to arrays which are trivial extensions of nowhere-zero \emph{functions on
permutations}. By this we mean that $E_{i_1,i_2,\ldots, i_N}$ is non-zero if and
only if $(i_1,i_2,\ldots, i_N)$ is a permutation of the set $\{1,2,\ldots,N\}$.
For instance, for $SU_q(N)$ the non-zero constants are
given by $E_{\sigma}=(-q)^{\mbox{inv} (\sigma)}$, where inv
denotes the function counting the number of inversion in the permutation. In
case of the quantum group $U_q(2)$, the function used in
the construction associates a permutation $\sigma \in S_3$ with the value
$(-q)^{3-\mbox{c} (\sigma)}$, where $\mbox{c} (\sigma)$ stands for the
number of cycles in $\sigma$. In the last known case of
$SU_{-1}(2)\ {\rule{0.4pt}{1.2ex}\! \smash{\times}}_\alpha \ \T^2$, a
transposition-coloring function was used (see \cite{kula13} for details). 

The aim of this paper is to study compact quantum groups coming from the
Woronowicz construction under the assumption that arrays $E$ are trivial
extensions of (nowhere-zero) functions on permutations. In such case, the
nondegeneracy condition is always satisfied. Moreover, we can use the idea
included in the proof of the Woronowicz construction to derive modular
properties of the quantum group  as well as commutation relations between its
generators. In particular, we show (Corollary \ref{cor_modular_prop}) that such
a quantum group is always a quantum subgroup of the van Daele and Wang's
universal unitary quantum group $A_u(M)$, where $M$ is a diagonal matrix with
coefficients depending on $E$. Morphisms which intertwine tensor powers of the
fundamental corepresentation are the key ingredients in these calculations. This
method replaces routine but tedious calculations (usually omitted as in
\cite{bragiel89} or \cite{lance98}) and, to the best of our knowledge, it has
not yet been used in this context. 

The main result of this paper (Theorem \ref{thm_classification}) asserts that
the Woronowicz construction for $N=3$, with $E$ as above, allows only a
restricted type of compact quantum groups. Some of them are known: the quantum
version of the two-dimensional torus, the quantum $U_q(2)$ with complex $q$
\cite{zhang_zhao_2005}, the quantum $SU_q(3)$ with $q$ real. Nevertheless, two
new families appear too; these are three versions of $SU_q(3)$ with complex
parameter $q$ and a non-commutative analogue of the semi-direct product of
two-dimensional torus $\T^2$ with the alternating group $A_3$, denoted by
$A_{p,k,m}(3)$. We will focus primarily on the question of non-triviality of
the new quantum groups, postponing deeper studies of their structure to a
separate paper.

The classification result may be interpreted as reflecting a very specific way
in which the construction was carried out. In fact, the twisted determinant
condition comes from the property that the third power of the fundamental
corepresentation intertwines the identity and, in this sense, it mimics the
property of the classical special unitary group. But the classification result
reveals also a great intuition of S. L. Woronowicz who was able to find a rare
set of parameters leading to the non-trivial quantum group $SU_q(3)$.  

Let us mention that the analogous classification problem for $N=2$ is
already solved. Indeed, using Theorem A1.1 from \cite{woronowicz87a}, one can
deduce that if an array $E=[E_{ij}]_{i,j=1,2}$ is a
trivial extension of a function on permutations, then the only quantum groups
which we get by the Woronowicz construction are the twisted $SU_q(2)$ (if
$\frac{E_{21}}{E_{12}}=-q$ is real) and the quantum version of the unit circle
$\T$ (otherwise). 

\medskip
The paper is organized as follows. In Section \ref{sec_prelim} we recall the
Woronowicz construction and gather necessary definitions. Modular properties
as well as commutation relations between generators are deduced from
morphism properties in Sections \ref{sec_morphisms} and \ref{sec_Q}. Additional
relations, which do not refer explicitly to morphism property, are presented
in Section \ref{sec_additional}. In Section \ref{sec_definitions} we define 
the new quantum groups arising from the construction and answer the question of
their non-triviality. Section \ref{sec_decomposition} is devoted to study the
case when the fundamental corepresentation decomposes into two blocks. Finally,
the proof of the classification theorem represents the content of Section
\ref{sec_class}. 

Throughout the paper, we assume that $N=3$, although the general theory
recalled in Section \ref{sec_prelim} can be stated for arbitrary dimension. We
use the notation $\C^*=\C\setminus\{0\}$, $\R^*=\R\setminus\{0\}$. A scalar
product will always be linear
with respect to the second variable.

\section{General theory}
\label{sec_prelim}

We recall in this Section the construction from \cite{woronowicz88} and
some notation to be used in the sequel. We refer to \cite{woronowicz87b} and
\cite{timmerman} for more detailed information about compact quantum groups. 

There exist various (not always equivalent) definitions of compact
quantum groups. The one which we adopt here comes from \cite{woronowicz87b},
see also \cite{vandaele_wang96}, and is the most suitable in our context. By a
\emph{compact (matrix) quantum group} we understand a pair $\G=(\A,u)$, where
$\A$ is a unital C$^*$-algebra and $u$ is a $N\times N$ matrix with coefficients
in $A$ such that:
\begin{enumerate}
 \item the coefficients $u_{ij}$, $i,j=1,\ldots, n$, generate $\A$,
 \item there exists a C$^*$-homomorphism $\Delta: \A \to \A \otimes \A$ such
that $\Delta(u_{ij})=\sum_{k=1}^n u_{ik}\otimes u_{kj}$,
 \item the matrices $u=[u_{ij}]$ and $u^t=[u_{ji}]$ are invertible. 
\end{enumerate}
The $*$-subalgebra generated by $u_{ij}$'s admits the Hopf structure, with the
counit $\e(u_{ij})=\delta_{ij}$ and the antipode $\kappa(u_{ij})=
(u^{-1})_{ij}$.

\medskip
For a fixed $N\in \N$ let us consider a $N^N$-array of complex numbers
$E=[E_{i_1,i_2,\ldots,i_N}]_{i_1,i_2,\ldots,i_N=1}^N$. We say that the array $E$
is {\em left nondegenerate} if the arrays $E_{1-}, \ldots,E_{N-}$ are linearly
independent, where $E_{k-}=[E_{k,i_2,\ldots,i_N}]_{i_2,\ldots,i_N=1}^N$.
Similarly, the array $E$ is said to be {\em right nondegenerate} if the arrays
$E_{-1}, \ldots,E_{-N}$ are linearly independent, where
$E_{-k}=[E_{i_1,\ldots,i_{N-1},k}]_{i_1,\ldots,i_{N-1}=1}^N$.

\begin{thm}[Woronowicz Construction, Theorem 1.4 in \cite{woronowicz88}] 
Let $E$ be a left and right nondegenerate $N^N$-array of complex
numbers, and let $\A$ be the universal C${}^*$-algebra with unit $1_\A$
generated by matrix
coefficients $u=[u_{jk}]_{j,k=1}^N$ satisfying
\begin{itemize}
\item[a)] {\bf unitarity condition}:
\begin{equation}\tag{U}\label{U}
\sum_{s=1}^{N} u_{js}u_{ks}^* = \delta_{jk}1_\A= \sum_{s=1}^{N}
u_{sj}^*u_{sk}
\end{equation}
\item[b)] {\bf twisted determinant condition}:
\begin{equation}\tag{TD}\label{TD}
\sum_{i_1,i_2,\ldots,i_N=1}^{N} E_{i_1,i_2,\ldots,i_N}
u_{a_1i_1}u_{a_2i_2}\ldots u_{a_Ni_N} = E_{a_1,a_2,\ldots,a_N}\cdot 1_\A.
\end{equation}
\end{itemize}
Then the pair $\G=(\A,u)$ is a compact quantum group. 
\end{thm}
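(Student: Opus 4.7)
The plan is to verify the three conditions in the definition of a compact matrix quantum group: the generators $u_{ij}$ generate $\A$ (automatic from the universal construction), a C*-homomorphism $\Delta$ exists as specified, and both $u$ and $u^t$ are invertible in $M_N(\A)$.

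Existence of $\A$ follows from the bound $\|u_{ij}\|\leq 1$ in every $*$-representation, itself an immediate consequence of (U). To produce $\Delta$, I would set $\Delta(u_{ij}):=\sum_k u_{ik}\otimes u_{kj}$ on generators and appeal to universality; this reduces the task to checking that the matrix $U:=[\Delta(u_{ij})]$ over $\A\otimes\A$ again satisfies (U) and (TD). Unitarity of $U$ is a direct computation: in $(UU^*)_{ij}$ the inner tensor factor collapses to $\delta_{rs}$ by (U) for $u$, leaving $\sum_r u_{ir}u_{jr}^*\otimes 1=\delta_{ij}\cdot 1\otimes 1$. For (TD), one expands the product $\sum E\cdot U_{a_1i_1}\cdots U_{a_Ni_N}$, swaps the order of summation, and applies (TD) for $u$ twice: first on the second tensor factor with $(k_1,\ldots,k_N)$ free, and then on the first tensor factor with $(a_1,\ldots,a_N)$ free.

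The invertibility of $u$ is immediate from (U). For $u^t$, I would first construct a \emph{left} inverse using (TD) together with right nondegeneracy of $E$. Fixing $a_1,\ldots,a_{N-1}$ and summing (TD) over $i_N$ isolates $u_{a_Ni_N}$ as the rightmost factor, multiplied against a polynomial in the remaining $u$'s; pairing the resulting identity with the dual basis $\{F^{(k)}\}$ to $\{E_{-l}\}$ supplied by right nondegeneracy yields an explicit $B\in M_N(\A)$ with $Bu^t=1$.

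The main obstacle is the construction of a \emph{right} inverse for $u^t$. Because only the column indices of the $u$'s are summed in (TD), the same isolation trick never extracts a $u$-factor with its row index summed, and the analogous argument using left nondegeneracy delivers merely a right inverse for $u$ itself, which is already known. To break this symmetry I would first derive the auxiliary ``transposed'' twisted determinant identity
\[
\sum_{i_1,\ldots,i_N}\bar E_{i_1,\ldots,i_N}\,u_{i_1a_1}u_{i_2a_2}\cdots u_{i_Na_N}=\bar E_{a_1,\ldots,a_N}
\]
as a \emph{consequence} of (TD) and (U): multiply (TD) successively from the left by $u^*_{a_1,p_1},u^*_{a_2,p_2},\ldots,u^*_{a_N,p_N}$, summing over $a_\ell$ at each step and collapsing each partial sum via the $\sum_s u^*_{s,p}u_{s,i}=\delta_{p,i}$ part of (U); a final C*-adjoint delivers the displayed identity. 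In the new identity the row indices of the $u$'s are summed and the column indices are free, so running the previous isolation argument with respect to its leftmost factor $u_{i_1a_1}$, paired with the left-nondegenerate dual basis $\{G^{(k)}\}$ for $\{\bar E_{l-}\}$ (whose existence follows because complex conjugation preserves linear independence), produces an explicit $C\in M_N(\A)$ with $u^tC=1$. Invertibility of $u^t$ follows, completing the verification.
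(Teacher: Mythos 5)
Your proposal is correct, but it proves the theorem by a genuinely different route than the one the paper follows. The paper simply invokes Woronowicz's Tannaka--Krein machinery: one builds a complete concrete monoidal W$^*$-category whose morphisms are generated by the maps $I_k\otimes E\otimes I_l$ and their adjoints, checks the conjugation axioms (this is where the operators $R$, $T$, $\bar T$ of Section 3 come from), and applies Theorem 1.3 of \cite{woronowicz88}. You instead verify the three axioms of a compact matrix quantum group directly, and the only genuinely delicate point --- invertibility of $u^t$, equivalently the existence of the conjugate $\bar u$ --- is handled by your auxiliary ``transposed'' determinant identity $\sum_{i_1,\ldots,i_N}\bar E_{i_1,\ldots,i_N}\,u_{i_1a_1}\cdots u_{i_Na_N}=\bar E_{a_1,\ldots,a_N}\cdot 1_{\mathsf A}$. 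I checked that derivation: multiplying (TD) on the left by $u^*_{a_1p_1}$ and summing over $a_1$ strips off the leftmost factor via the second half of (U) while depositing $u^*_{a_1p_1}$ on the right-hand side; iterating and taking adjoints gives exactly the displayed identity, and pairing its leftmost factor against the dual basis supplied by left nondegeneracy does produce a right inverse $C$ with $u^tC=1$, complementing the left inverse $B$ obtained from (TD) and right nondegeneracy. Your identity is precisely the elementary avatar of Woronowicz's statement that $s=\sum\bar m_{st}\otimes u^*_{st}$ is a corepresentation conjugate to $u$ (item (a) in Section 3.2), and your two one-sided inverses correspond to the morphisms $T$ and $\bar T$. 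What your route buys is a short, self-contained argument using nothing beyond (U), (TD) and linear algebra; what the categorical route buys is the entire collection of intertwiners $P$, $Q$, $R$, $T$, $\bar T$, which the rest of the paper relies on heavily, so for the purposes of this paper the detour through the W$^*$-category is not wasted effort. One small remark: it is worth noting explicitly that $u_{jk}=\delta_{jk}$ satisfies (U) and (TD) for any $E$, so the universal C$^*$-algebra is nonzero and the counit exists.
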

We shall call $\G$ as above a \emph{compact quantum group related to $E$, coming
from Woronowicz construction}. The quantum group structure on $\G$ is then
imposed by the fact that $u$ is a corepresentation, so the usual formulas for
comultiplication, counit and antipode hold. Namely, 
$$ \Delta(u_{jk}) =\sum_{s=1}^N u_{js}\otimes u_{sk}, \quad
\e(u_{jk})=\delta_{jk}, \quad \kappa (u_{jk})=u_{jk}^*.$$
The matrix $u$ is called the \emph{fundamental corepresentation}. 

Given a compact quantum group $\G$ there exists the Haar state $h \in
\mathsf{A}'$ and it is a KMS-state, i.e.  $h(ab) =h(b\sigma_{-i}(a))$ for any $
a,b\in \A$, where $\sigma_{-i}$ is an analytic extension of the modular
automorphisms group $(\sigma_t)_{t\in \R}$ (see for example \cite[Definition
5.3.1]{bratteli+robinson97}). The KMS-property encodes the modular properties
of $\G$, which describe how far from being tracial the Haar state is. If $h$
happens to be tracial, then we say that $\G$ is {\em of Kac type}.

It is well known (cf.\ \cite{woronowicz87b}) that if the C$^*$-algebra $\A$ of
a compact quantum group is commutative, then there exists a (classical) compact
group $G\subset \mathbb{GL}(N)$ such that $\A=C(G)$ is the algebra of all
continuous functions on $G$, the generator $u_{ij}$ associates a matrix $g\in
G$ with the $(i,j)$-th coefficient of $g$, and the comultiplication of functions
from $\A$ reflects the group multiplication: $\Delta(f) (g,g')=f(gg')$. Then
$\G=(\A,[u_{ij}]_{i,j=1}^N)$ is usually called a \emph{quantum version} of the
group $G$. Quantum versions of classical groups represent a very
well-understood class of compact quantum groups (in particular, they are
always of Kac type). That is why, in this paper, we shall refer to them as to
\emph{trivial} compact quantum groups. 

\medskip
We say that a $N^N$-array $E$ satisfies the \emph{permutation condition} if
\begin{equation} \tag{P} \label{perm_cond}
 E_{i_1,i_2,\ldots,i_N}\neq 0 \quad \mbox{if and only if}\quad
(i_1,i_2,\ldots,i_N)\in S_N.
\end{equation}
This means that there exists a complex, nowhere-zero function $f$ on $S_N$, the
set of permutations of $N$ elements, such that $E$ is the trivial extension of 
$f$, that is
$$E_{i_1,i_2,\ldots,i_N}=\left\{ \begin{array}{ll} f(\sigma) & \mbox{if }
(i_1,i_2,\ldots,i_N)=\sigma\in S_N\\
0 & \mbox{if }  (i_1,i_2,\ldots,i_N)\not\in S_N\end{array}\right. .$$
It is easily seen that the permutation condition \eqref{perm_cond} ensures
that
the array $E$ is left and right nondegenerate and thus the Woronowicz
construction applies. 

It will be convenient to use the following (technical) definition.
\begin{defi}
 A \emph{PW-quantum group of dimension $N$} is a compact quantum group $\G$ for
which there exists a $N$-dimensional array $E$ satisfying the permutation
condition \eqref{perm_cond} and such that $\G$ comes from the Woronowicz
construction related to the array $E$.
\end{defi}

Let $E$ be an array related to a PW-quantum group (in particular, $E$ must
satisfy the permutation condition). Then we can multiply all constants by
a non-zero scalar and the relations remain unchanged. Therefore, without
lose of generality, we can assume the normalization $E_{12\ldots N}=1$.

The main problem investigated in this paper can now be shortly rephrased:
classify all PW-quantum groups of dimension 3. The answer which we aim to show
is the following. 

\begin{thm}[Classification Theorem] \label{thm_classification}
The following is a complete list of PW-quantum groups of dimension 3:
 \begin{enumerate}
  \item $C(\T^2)$, quantum version of two-dimensional torus,
  \item $U_q(2)$ for $q\in \C^*$, see Definition \ref{def_uq2},
  \item $A_{p,k,m}(3)$ for $p\in \C^*$ and $k,m\in \{0,1,2\}$, see Definition
\ref{def_akm},
  \item $SU_{p,m}(3)$ for $p\in \C^*$ and $m \in \{0,1,2\}$, see
Definition \ref{def_su_k}.
 \end{enumerate}
\end{thm}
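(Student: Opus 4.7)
The plan is to leverage the general theory from Sections \ref{sec_prelim}--\ref{sec_additional} to reduce the classification to a finite case analysis in 5 complex parameters. After the normalization $E_{123}=1$, a PW-quantum group of dimension $3$ is determined by the five complex numbers $E_{132},E_{213},E_{231},E_{312},E_{321}\in \C^*$. The whole argument consists in pinning down, via consistency relations, which choices of these numbers give rise to which of the four listed families, and then recognising each surviving family as a known one or as one of the newly defined quantum groups $A_{p,k,m}(3)$ or $SU_{p,m}(3)$.

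The first step is to collect all constraints on these parameters and on the generators $u_{ij}$ that follow from the machinery already built. The intertwining morphism calculations in Section \ref{sec_morphisms} produce an explicit diagonal matrix $Q$ (equivalently the matrix $M$ in Corollary \ref{cor_modular_prop}) whose entries are rational expressions in the $E_\sigma$'s, together with commutation-type relations between the $u_{ij}$'s coming from the analysis of $\Mor(u^{\ot 2},u^{\ot 2})$. Section \ref{sec_Q} then translates the modular data into further identities, and Section \ref{sec_additional} adds the polynomial relations that do not come directly from a morphism. Taken together these give a finite system of equations on the $E_\sigma$'s and a set of quadratic relations that express each product $u_{ij}u_{kl}$ in terms of other products.

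The second step is a case split according to the block structure of the fundamental corepresentation $u$. If $u$ decomposes (in the sense of Section \ref{sec_decomposition}), then the analysis of that section isolates the cases (1) $C(\T^2)$, (2) $U_q(2)$ with $q\in \C^*$, and (3) $A_{p,k,m}(3)$; here the discrete parameters $k,m\in\{0,1,2\}$ enter as a consequence of the fact that the TD condition sums over $S_3$ and therefore couples naturally to cube roots of unity. If $u$ is indecomposable, one uses the commutation relations obtained in step one together with the cubic constraint coming from the TD condition (which expresses the invariant vector in $u^{\ot 3}$) to show that the ratios $E_\sigma/E_{123}$ are forced to have the form $(-p)^{\mathrm{inv}(\sigma)}$ up to a multiplicative cube root of unity, yielding exactly the family $SU_{p,m}(3)$, $m\in\{0,1,2\}$.

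The main obstacle, I expect, will be this indecomposable case: one has to rule out all "exotic" parameter choices by solving a system of polynomial equations in $E_{132},\ldots,E_{321}$, and to do so it is important to combine the intertwiner relations with the unitarity \eqref{U} in a non-trivial way (the mere TD condition \eqref{TD} is too weak on its own). Keeping track of which numerical freedom is genuine (giving rise to the continuous parameter $p\in \C^*$) and which is spurious (i.e.\ can be absorbed by a gauge transformation of the generators) is where the isomorphism identifications among different arrays $E$ come in, and it is the step that naturally produces the discrete parameters $k,m\in\{0,1,2\}$. Once this case analysis is complete, a final verification that each listed family actually arises from some admissible array $E$ closes the argument and establishes completeness of the list (1)--(4).
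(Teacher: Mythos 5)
Your case split --- decomposable versus indecomposable $u$ --- does not match how the classification actually has to be organized, and as stated it would fail. First, you place $A_{p,k,m}(3)$ in the decomposable branch; in fact, for those arrays all three diagonal constants $p_1,p_2,p_3$ coincide and $u$ admits no a priori block decomposition --- and conversely, when $u$ does decompose into a $1\times 1$ and a $2\times 2$ block, Theorem \ref{thm_uq2_or_t2} shows the only possible outcomes are $C(\T^2)$ and $U_q(2)$. Second, your claim that indecomposability forces the array into the $SU_{p,m}(3)$ form (inversion function times cube roots of unity) is false: the case $p_1=p_2=p_3$ contains \emph{all four} families, and the decisive invariant there is not reducibility of $u$ but the trichotomy on the characteristic constants $\co{l}{n}$ of \eqref{eq_const_charact} --- none equal to $1$, exactly two equal to $1$, or all six equal to $1$ (the relations \eqref{eq_char_const_rel} exclude every other pattern). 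Only the last subcase reduces to solving a polynomial system in the $E_\sigma$'s in the way you describe (Lemma \ref{lem_4_solutions}, producing four parameter families that collapse to $SU_{p,m}(3)$ and $A_{p,-m,m}(3)$ via the permutation-isomorphism Lemma \ref{lem_iso}).

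The other two subcases need representation-theoretic arguments that your plan does not anticipate. When no characteristic constant equals $1$, the generators become normal partial isometries with $\pi$-invariant ranges, so every irreducible representation is supported on a single permutation matrix pattern, and one must rule out odd permutations; this yields $A_{p,k,m}(3)$ with $k+m\notin 3\N$, or $U_q(2)$, or $C(\T^2)$ depending on the modular constants. When exactly two characteristic constants equal $1$, one must show that under \emph{every} irreducible representation $\pi(u_{11})$ is unitary, so that $u$ decomposes representation-by-representation even though the array forces no global block decomposition; this is how $U_q(2)$ and $C(\T^2)$ reappear from arrays with $p_1=p_2=p_3$. This last point is the real content missing from your proposal: a dichotomy based solely on whether $u$ visibly splits cannot separate the families, because the splitting relevant to the classification is sometimes only visible at the level of irreducible representations.
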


The proof of Theorem will be given in Section \ref{sec_class}. We emphasize,
however, that the meaning of this result is that, although a lot of freedom
is given in the Woronowicz construction, only few quantum groups can appear.
Moreover, it will become clear from Section \ref{sec_definitions} that the
non-trivial ones are $U_q(2)$ with $q\neq 1$, $SU_{p,m}(3)$ with $p\neq 1$ and
$m\neq 0$, and $A_{p,k,m}(3)$ with $k\neq m$ and $k+m \not \in 3\mathbb{N}$. 

\begin{rem} \label{rem_notation}
For $N=3$, an array $E$ satisfying the permutation condition (P) and the
normalization $E_{123}=1$ is uniquely defined by 5 complex, non-zero parameters.
In the sequel, we shall frequently use these parameters, adopting the following
notation:
\begin{eqnarray*} 
&E_{123}=1, \; E_{132}=p, \; E_{213}=q, & \\ 
&E_{231}=r, \; E_{312}=s, \; E_{321}=t. &
\end{eqnarray*}
\end{rem}

\begin{rem} \label{rem_convention}
The indices used to number the generators in the array $u=[u_{ij}]_{i,j=1}^3$
always change in $\{1,2,3\}$. We try to use $r,i,k$ to number the rows in the
array and $n,j,l$ to number the columns. This is not always possible due to
the antipode property, see Corollary \ref{cor_modular_prop}. Moreover, starting
from Section \ref{ssec_Q}, whenever a triple $(r,i,k)$ or $(n,j,l)$ appear, we
tacitly assume that this is an element of $S_3$. For example, when $r$ and $i$
are given and $r\neq i$, then the index $k$ is the unique element different from
the other two, that is such that $(r,i,k)\in S_3$. 
\end{rem}

\section{Morphisms of PW-quantum groups}
\label{sec_morphisms}
Throughout this Section, whenever $\G$ appears, it is understood to be a
PW-quantum group related to a normalized array $E$ of dimension 3. We shall
deduce here several relations which hold for generators of such $\G$. For that
purpose we recall the main idea of the proof of the Woronowicz construction,
which establishes morphisms between tensor powers of the fundamental
corepresentation. Then we show how the morphism properties translate into
relations. 

\subsection{Idea of the Woronowicz construction} 

Each compact quantum group $\G=(\A, u)$ gives rise to the so-called {complete
concrete monoidal W$^*$-category} $R$, cf. \cite{woronowicz88}. 
In this category, objects ${\rm Obj}(R)$ are finite-dimensional unitary
corepresentations $u_s$ on (some) Hilbert spaces $H_s$ with a distinguish
element $u$--the fundamental corepresentation. Morphisms of this category are
intertwining operators
$\Mor (u_s,u_t) :=\{T\in B(H_s,H_t) : u_s(T\otimes \id) =
(T\otimes \id) u_t\}$, where $\id$ denotes the identity on $\A$.  

In \cite[Theorem 1.3]{woronowicz88}, Woronowicz showed that the converse is
true. Namely, if $R$ is a complete concrete monoidal W$^*$-category with a
distinguished object $u$, and if the conjugate object $\bar{u}$ exists and $\{u,
\bar{u}\}$ generates $\text{Obj} (R)$, then the universal C$^*$-algebra $\A$
generated by matrix coefficients of $u$, together with $u$ as fundamental
corepresentation is a compact quantum group. 

This result lies in the core of the proof of the Woronowicz construction.
Namely, given the C$^*$-algebra $\A$ with $u$ satisfying (U) and (TD),
it is possible to construct a complete concrete monoidal W$^*$-category
$R(\G)$, which leads to $\G=(\A,u)$. Objects of this category are generated by
tensor powers of the matrix $u$
$$ {\rm Obj}_0 (\G):=\{ u^{\tp n}: n\in \N\}.$$
Morphisms are linear
combinations of elements which are composition of
mappings of the form $I_k\otimes E \otimes I_l$ and  $I_k\otimes E^* \otimes
I_l$, where $E$ is a fixed operator. 
The operator $E$ serves as the main building block for morphisms between
different elements in $ {\rm Obj} (\G)$. 

\subsection{Morphisms} 

Let $e_1,e_2,e_3$ be the standard orthonormal basis in $\C^3$. The
fundamental corepresentation $u$ belongs to $M_3(\A) \cong B(\C^3) \otimes \A$
and thus can be expressed as
$$ u= \sum_{k,l} m_{kl} \otimes u_{kl},$$
where $m_{kl}$ are standard matrix units (i.e. $m_{kl}e_j=\delta_{jk}e_k$).
Given two elements $v\in M_{n'}(\A)$, $w\in M_{n''}(\A)$, for some $n',n''\in
\mathbb{N}$, 
$$ v = \sum_{j,k=1}^{n'} m'_{jk} \otimes v_{jk} w_{st}, \quad 
w = \sum_{s,t=1}^{n''} m''_{st} \otimes v_{jk} w_{st},$$
where $m'_{jk}$ and $m''_{st}$ are matrix units in
$M_{n'}(\A)$ and $M_{n''}(\A)$ respectively, 
we define the \emph{tensor product of corepresentations} $v$ and $w$ 
$$ v \tp w = \sum_{j,k=1}^{n'} \sum_{s,t=1}^{n''} m'_{jk}
\otimes m''_{st} \otimes v_{jk} w_{st}.$$

Let $H_n=(\C^3)^{\otimes n}$ and let $I_n$ denotes the identity operator on
$H_n$. We define the following mappings:
\begin{eqnarray*}
 \label{eq_E}
 E: \C \ni 1 &\mapsto& \sum_{i,j,k} E_{ijk} e_i \otimes e_j
\otimes e_k \in H_3, \\
 \label{eq_P}
 P=(E^*\otimes I_1)(I_1 \otimes E) : H_1 \ni e_x &\mapsto& \sum_a
\left(\sum_{j,k} \bar{E}_{xjk} E_{jka}\right) e_a \in H_1, \\
 \label{eq_Q}
 Q=(E^*\otimes I_2)(I_2 \otimes E) : H_2 \ni e_x\otimes e_y
&\mapsto&
\sum_{a,b} \left( \sum_k \bar{E}_{xyk} E_{kab} \right) e_a \otimes e_b \in H_2. 
\end{eqnarray*}
The explicit expressions follow from $E^*(e_i \otimes e_j \otimes
e_k)=\bar{E}_{ijk}$. Moreover, we observe that the space $\bar{H} :=Q(H_2)$ is
spanned by the vectors 
$$ x_k=\sum_{a,b} E_{kab} e_a \otimes e_b, \quad k=1,2,3$$
(cf. formula (4.2) in \cite{woronowicz88}). Under the permutation assumption
for $E$, the vectors $x_1,x_2,x_3$ form an orthogonal basis (not orthonormal
in general). 

Furthermore, let $R$ denotes the embedding of $\bar{H}$ into $H_2$. Define
morphisms
\begin{eqnarray*}
T:=(I_1\otimes R^*)E: \C \ni 1 &\mapsto& \sum_{i} e_i \otimes x_i \in 
H_1\otimes \bar{H} , \\
\bar{T}: \bar{H} \otimes H_1 \ni x_j \otimes e_k &\mapsto& \delta_{jk} \in \C.
\end{eqnarray*}
and let
$$ s = \sum_{s,t=1}^3 \bar{m}_{st} \otimes u^*_{st}\in M_3(A)\cong B(\bar{H})
\otimes \A,$$
where $\bar{m}_{jk}$ ($j,k=1,2,3$) are the matrix units in $B(\bar{H})$ with
respect to the orthogonal basis $x_1,x_2,x_3$, i.e. $\bar{m}_{jk} x_n
=\delta_{kn} x_j$. 

It follows from (U) and (TP) (see the
proof of Theorem 1.4 in \cite{woronowicz88}) that 
\begin{enumerate}
 \item[(a)] $s$ is the corepresentation conjugate to $u$ (denoted in the sequel
by $\bar{u}$),
 \item[(b)] $E\in \Mor (1, u^{\tp 3})$, 
 \item[(c)] $P\in \Mor(u,u)$, $Q\in \Mor (u^{\tp 2}, u^{\tp 2})$, $R\in \Mor
(\bar{u}, u^{\tp 2})$, 
 \item[(d)] $T\in \Mor (1, u{\tp}\bar{u})$, $\bar{T} \in \Mor (\bar{u}{\tp}u,
1)$. 
\end{enumerate}
Note that (b) encodes the twisted determinant condition (TP) whereas the
condition (U) is hidden in (d). 

In the following we shall translate these morphism properties into 
relations between algebra generators $u_{jk}$'s. 
\subsection{Decomposition of $u$}

For $\G$ related to the array $E$ we define the \emph{diagonal constants} 
\begin{equation} \label{eq_const_diagonal}
 p_j =\sum_{a,b} \bar{E}_{jab} E_{abj} \quad \mbox{for}\quad  j=1,2,3. 
\end{equation}
\begin{lemma} 
If $p_j \neq p_k$ then $u_{kj}=u_{jk}=0$. 
\end{lemma}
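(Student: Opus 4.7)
The plan is to exploit the morphism property $P\in\Mor(u,u)$ listed in item (c) of the properties established from $(U)$ and $(TD)$. This says $u(P\ot\id)=(P\ot\id)u$ in $B(H_1)\ot\A$, and the key point will be that under the permutation condition $P$ is diagonal with diagonal entries precisely $p_1,p_2,p_3$. Once this is in hand, comparing matrix coefficients will immediately force $u_{kl}=0$ whenever $p_k\neq p_l$.

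First I would compute the matrix of $P$ explicitly. From the defining formula $P(e_x)=\sum_a\bigl(\sum_{j,k}\bar E_{xjk}E_{jka}\bigr)e_a$, the coefficient $P_{ax}$ of $e_a$ is $\sum_{j,k}\bar E_{xjk}E_{jka}$. Under the permutation condition, a nonzero contribution requires both $(x,j,k)\in S_3$ and $(j,k,a)\in S_3$. The first forces $\{j,k\}=\{1,2,3\}\setminus\{x\}$, and then $(j,k,a)\in S_3$ forces $a\notin\{j,k\}$, i.e.\ $a=x$. Hence $P_{ax}=0$ for $a\neq x$, while the diagonal entry is exactly $P_{xx}=\sum_{a,b}\bar E_{xab}E_{abx}=p_x$ by \eqref{eq_const_diagonal} (after relabeling $(j,k)$ as $(a,b)$).

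Next I would unpack the intertwining relation. Writing $u=\sum_{k,l}m_{kl}\ot u_{kl}$ and $P\ot\id=\sum_x p_x\, m_{xx}\ot 1_\A$, one computes
\begin{align*}
u(P\ot\id)&=\sum_{k,l}p_l\,m_{kl}\ot u_{kl},\\
(P\ot\id)u&=\sum_{k,l}p_k\,m_{kl}\ot u_{kl},
\end{align*}
using $m_{kl}m_{xx}=\delta_{lx}m_{kl}$ and $m_{xx}m_{kl}=\delta_{xk}m_{kl}$. Equating coefficients of each $m_{kl}$ gives $(p_k-p_l)u_{kl}=0$ in $\A$ for all $k,l$. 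Taking $(k,l)=(k,j)$ and $(k,l)=(j,k)$ and invoking $p_j\neq p_k$ yields $u_{kj}=u_{jk}=0$, as claimed.

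There is essentially no analytic obstacle here; the only subtle point is the combinatorial reduction showing that $P$ is diagonal, which relies in an essential way on the permutation condition \eqref{perm_cond} (without it, $P$ could have nonzero off-diagonal entries and the argument would collapse to a weaker statement).
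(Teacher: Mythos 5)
Your proposal is correct and follows exactly the paper's own argument: use the permutation condition to show that $P$ is diagonal with entries $p_1,p_2,p_3$, then compare the matrix-unit coefficients of $u(P\otimes\id)=(P\otimes\id)u$ to get $(p_k-p_l)u_{kl}=0$. Nothing to add.
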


\begin{proof} 
The operator $P=(E^*\otimes I_1)(I_1 \otimes E)$ is by definition a morphism
between $u$ and itself, and (as mentioned above) $P e_j = \sum_k\left(\sum_{a,b}
\bar{E}_{jab} E_{abk}\right) e_k$. But since $E_{jln}\neq 0$ iff $(j,l,n)\in
S_3$, the only non-zero coefficient on the right-hand side will be the one
standing by
$e_j$. Thus $P$ is diagonal, $P e_j = p_j e_j$. 
The fact that $P\in \Mor(u,u)$, i.e.\ $u(P\otimes \id)= (P\otimes \id) u$
means that the two terms
\begin{eqnarray*}
 u(P\otimes \id)&=&
 \big( \sum_{k,l} m_{kl} \otimes u_{kl}\big) 
 \big( \sum_{j} m_{jj} \otimes p_j\one\big)
 = \sum_{j,k} m_{kj} \otimes p_ju_{kj}\\
 (P\otimes \id)u&=&
 \big( \sum_{j} m_{jj} \otimes p_j\one\big)
 \big( \sum_{k,l} m_{kl} \otimes u_{kl}\big) 
 = \sum_{j,l} m_{jl} \otimes p_ju_{jl}
\end{eqnarray*}
are equal (above, we use $m_{kl}m_{ij}=\delta_{il} m_{kj}$). Comparing the
corresponding terms, we see that $p_ju_{kj} = p_ku_{kj}$, from which the
conclusion follows.
\end{proof}

\begin{thm}[Decomposition of the fundamental representation] \label{thm_decomp}
 Let $\G$ be a PW-quantum group related to $E$, with diagonal constants
$p_1,p_2,p_3$. 
Then unless $p_1=p_2=p_3$, $u$ is reducible. 
More precisely, if the diagonal constants are pairwisely different, then
$u=u_{11}\oplus u_{22}\oplus u_{33}$. If $p_i=p_k\neq p_r$ then $u$ decomposes
into two blocks of dimension 1 and 2.
\end{thm}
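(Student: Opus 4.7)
The plan is to treat the statement as an immediate corollary of the preceding lemma, which tells us that $p_j\neq p_k$ forces $u_{jk}=u_{kj}=0$. I would simply translate the two cases — pairwise distinct diagonal constants, versus exactly one distinct value — into the resulting zero pattern of $u$, and then verify that the surviving entries form genuine subcorepresentations with respect to the coproduct and the unitarity relation.

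For the case of three pairwise distinct diagonal constants, the lemma kills every off-diagonal entry, so $u$ is diagonal. To promote this to the statement $u=u_{11}\oplus u_{22}\oplus u_{33}$, I would check that each scalar entry $u_{jj}$ is by itself a one-dimensional corepresentation. The formula $\Delta(u_{jj})=\sum_{s=1}^{3}u_{js}\otimes u_{sj}$ collapses to $u_{jj}\otimes u_{jj}$ because every other summand contains a factor $u_{js}$ or $u_{sj}$ with $s\neq j$, which vanishes; similarly, the relation (U) specialized to $j=k$ collapses to $u_{jj}u_{jj}^*=u_{jj}^*u_{jj}=1_\A$, so each $u_{jj}$ is unitary. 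This is exactly what is needed to interpret the diagonal matrix as a direct sum of three one-dimensional corepresentations.

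For the case $p_i=p_k\neq p_r$, I would apply the lemma to the pairs $\{i,r\}$ and $\{k,r\}$, obtaining $u_{ir}=u_{ri}=u_{kr}=u_{rk}=0$. The $r$-th row and column then contain only the single entry $u_{rr}$, while $u_{ii}, u_{ik}, u_{ki}, u_{kk}$ assemble into a $2\times 2$ submatrix. As in the previous case, the relations $\Delta(u_{rs})=\sum_t u_{rt}\otimes u_{ts}$ and (U) split cleanly along the partition $\{r\}\sqcup\{i,k\}$ because all cross terms carry a vanishing factor. Hence $u_{rr}$ is a one-dimensional unitary corepresentation and the remaining $2\times 2$ block is a two-dimensional unitary corepresentation, yielding the claimed decomposition.

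I do not anticipate any real obstacle: the content of the theorem is essentially a repackaging of the previous lemma in matrix language. The only point requiring mild attention is confirming that the sub-blocks inherit unitarity; this is automatic since the cross terms in (U) involve precisely the generators forced to zero by the lemma, so no delicate algebra is needed.
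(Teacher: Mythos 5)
Your argument is correct and is exactly the route the paper intends: Theorem \ref{thm_decomp} is stated immediately after the lemma asserting $p_j\neq p_k \Rightarrow u_{jk}=u_{kj}=0$ and is left without an explicit proof precisely because it follows by the bookkeeping you carry out (the zero pattern forced by the lemma, plus the observation that the coproduct and the unitarity relation split along the resulting block structure). Your verification that the sub-blocks are unitary corepresentations fills in the details the paper omits, with no gap.
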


Note that the fact that $p_1=p_2=p_3$ does not imply that $u$ is irreducible. 
\subsection{Modular properties}
Let us define the \emph{modular constants} related to the array $E$ by
\begin{equation} \label{eq_const_modular}
M_n = \sum_{jk} |E_{njk}|^2 \quad \mbox{for} \quad n=1,2,3.  
\end{equation}
These constants encode the modular properties of the related PW-quantum group in
the sense which will be clear from Corollary \ref{cor_modular_prop}. First, we
show that the transpose of $u$ is similar to a unitary matrix. 

\begin{thm}[Modular properties] \label{thm_modular}
Let $\G$ be a PW-quantum group related to $E$, with modular constants
$M_1,M_2,M_3$. Then the generators of $\G$ satisfy 
\begin{equation} \label{eq_modular_prop} 
\sum_{s=1}^3 M_s u_{sa}u^*_{sb} =\delta_{ab} M_a \one, \quad 
\sum_{i=1}^3 \frac{1}{M_i} u^*_{ki}u_{si}=\frac{1}{M_k}\delta_{ks} \one.
\end{equation}
\end{thm}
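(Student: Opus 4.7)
The plan is to derive both identities as direct consequences of the morphism properties $T\in\Mor(1,u\tp\bar{u})$ and $\bar{T}\in\Mor(\bar{u}\tp u,1)$ already listed in item (d). Since $u$ and $\bar u$ are unitary corepresentations, taking the Hilbert-space adjoint of an intertwiner yields an intertwiner in the opposite direction, so
$$
T^*\in\Mor(u\tp\bar{u},1),\qquad \bar{T}^*\in\Mor(1,\bar{u}\tp u).
$$
Expanding the intertwining conditions for these two adjoints on the basis vectors will produce exactly the two identities in \eqref{eq_modular_prop}, with the constants $M_s$ and $1/M_s$ entering through the norms of the $x_j$'s.

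First I would compute $T^*$ and $\bar{T}^*$ explicitly. The key input is the observation that the basis $\{x_1,x_2,x_3\}$ of $\bar H$ is orthogonal with $\langle x_a,x_b\rangle=\delta_{ab}M_a$: indeed, $\langle x_a,x_b\rangle=\sum_{r,s}\bar E_{ars}E_{brs}$, and under the permutation condition a nonzero summand forces $\{a,r,s\}=\{1,2,3\}=\{b,r,s\}$, hence $a=b$. Using only this orthogonality one reads off
$$
T^*(e_a\ot x_b)=\delta_{ab}M_a,\qquad \bar{T}^*(1)=\sum_{j=1}^3\frac{1}{M_j}\,x_j\ot e_j,
$$
the weights being precisely $\|x_a\|^2$ and $\|x_j\|^{-2}$.

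Now I expand the morphism condition for $T^*$. Writing $u\tp\bar u=\sum_{k,l,s,t}m_{kl}\ot\bar m_{st}\ot u_{kl}u_{st}^*$ and using $m_{kl}e_a=\delta_{la}e_k$, $\bar m_{st}x_b=\delta_{tb}x_s$, the evaluation of $(T^*\otimes\id)(u\tp\bar u)$ on $e_a\ot x_b\ot 1_\A$ collapses to
$$
\sum_{k,s}T^*(e_k\ot x_s)\ot u_{ka}u_{sb}^*=\sum_{s=1}^3 M_s\,u_{sa}u_{sb}^*,
$$
while the intertwining condition equates this with $T^*(e_a\ot x_b)\ot 1_\A=\delta_{ab}M_a\,1_\A$. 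This is the first identity of \eqref{eq_modular_prop}. An entirely analogous expansion of $\bar{T}^*\in\Mor(1,\bar u\tp u)$ applied to $1\in\C$ yields $\sum_{j}M_j^{-1}u_{sj}^*u_{kj}=\delta_{sk}M_s^{-1}\,1_\A$, which after relabeling $s\leftrightarrow k$ is the second identity.

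The only step that requires genuine care (and the natural place for a sign or index slip) is the computation of the adjoints on the non-orthonormal basis $\{x_i\}$; the weights $M_a$ and $M_a^{-1}$ must come out on opposite sides because the matrix units $\bar m_{st}$ are defined by $\bar m_{jk}x_n=\delta_{kn}x_j$ rather than from an orthonormal basis. Once $T^*$ and $\bar T^*$ are written down correctly the rest is pure bookkeeping, and in particular no direct manipulation of the twisted determinant relation \eqref{TD} is needed—those have already been absorbed into the existence of $T$ and $\bar T$ as morphisms.
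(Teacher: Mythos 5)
Your proposal is correct and follows essentially the same route as the paper: it uses $T^*\in\Mor(u\tp\bar u,1)$ and $\bar T^*\in\Mor(1,\bar u\tp u)$ (the paper justifies the passage to adjoints via the W$^*$-category axiom CMW$^*$\,III rather than by appeal to unitarity, but the content is the same), computes the adjoints from the orthogonality $\langle x_a,x_b\rangle=\delta_{ab}M_a$, and expands the two intertwining conditions exactly as in the paper's proof.
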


\begin{proof}
Recall that $T$ is a linear operator from $\C$ to $H_1\otimes \bar{H}$ defined
by $ T(1)= \sum_{i} e_i \otimes x_i$, and is a morphism between $1$ and
$u{\tp}\bar{u}$. Then from the condition CMW$^*$ III in \cite{woronowicz88} it
follows that $T^*\in \Mor (u{\tp}\bar{u},1)$. It can be checked directly that 
$$T^*(e_a\otimes x_b) = \delta_{ab} M_a, \quad \mbox{where} \quad M_a =\|x_a\|^2
.$$
Thus $(T^*\otimes \id)(e_a \otimes x_b \otimes \one) = \delta_{ab} M_a \one$
while 
\begin{eqnarray*}
 (T^*\otimes \id) (u{\tp}\bar{u})(e_a \otimes x_b \otimes \one) 
 &=& (T^*\otimes \id) \big( \sum_{i,j,s,t} m_{ij}e_a \otimes \bar{m}_{st}x_b
\otimes u_{ij}u^*_{st}\big)
 \\ &=& 
\sum_{i,s} T^*( e_i \otimes x_s) \otimes u_{ia}u^*_{sb}
= \sum_{s} M_s u_{sa}u^*_{sb}.
\end{eqnarray*}
So $(T^*\otimes \id) (u{\tp}\bar{u}) = (T^*\otimes \id)$ yields the first
relation in \eqref{eq_modular_prop}.

To show the second relation, we use the operator $\bar{T}: \bar{H} \otimes H_1
\to \C$ defined by $\bar{T}(x_j \otimes e_k) = \delta_{jk}$. Its adjoint is 
$$\bar{T}^*(1) = \sum_{i} \frac{1}{M_i} x_i \otimes e_i, \quad M_i =\|x_i\|^2.$$
Since we know that $\bar{T} \in \Mor (\bar{u}{\tp}u, 1)$, thus $\bar{T}^* \in
\Mor (1,\bar{u}{\tp}u)$.
Evaluating the morphism condition on $1 \otimes \one$ we have
\begin{eqnarray*}
\lefteqn{  \sum_{i} x_i \otimes e_i \otimes \frac{1}{M_i} \one  = 
(\bar{T}^*\otimes
\id)(1\otimes \one) = (\bar{u}{\tp}u)(\bar{T}^*\otimes \id) (1 \otimes \one)}
\\ &=& 
\big( \sum_{k,j,s,t} \bar{m}_{kj} \otimes m_{st} \otimes u^*_{kj}u_{st}\big)
(\sum_{i} \frac{1}{M_i} x_i \otimes e_i \otimes \one)
= 
\sum_{k,s} x_k \otimes e_s \otimes \sum_{i} \frac{1}{M_i} u^*_{ki}u_{si}.
\end{eqnarray*}
Comparing the coefficients we find out that the second relation in
\eqref{eq_modular_prop} holds. 
To end the proof, it remains to see that $\|x_n\|^2= \sum_{jk} |E_{njk}|^2$.
\end{proof}

Note that the relations \eqref{eq_modular_prop} can be read as 
\begin{equation*} 
 u^tM\bar{u}M^{-1} = I = M\bar{u}M^{-1}u^t,
\end{equation*}
where $M={\rm diag} (M_1,M_2,M_3)$. This fits with the definition
of the universal unitary quantum group, cf. \cite{vandaele_wang96}. The next
result combine
this observation with the standard results from \cite{woronowicz87b}, see also
\cite{timmerman}.

\begin{cor} \label{cor_modular_prop}
Let $\G$ be a PW-quantum group related to $E$, with modular constants
$M_1,M_2,M_3$. Then $\G$ is a quantum subgroup of the universal unitary
quantum group $A_u(M)$, where $M={\rm diag}\ (M_1,M_2,M_3)$. Moreover, 
\begin{enumerate}
 \item the antipode of $\G$ is uniquely defined by 
  \begin{equation} \label{eq_antipode}
  \kappa(u_{kj}) = u_{jk}^* \quad \mbox{ and }
  \quad \kappa(u_{kj}^*) = \frac{M_j}{M_k} u_{jk}; 
  \end{equation}
 \item the Woronowicz characters constants of $\G$ are given by
  $$f_z(u_{ij})=\lambda M_j\, \delta_{ij} \quad \mbox{ with } \quad
  \lambda=\sqrt{(\sum\frac{1}{M_i})(\sum M_i)^{-1}};$$ 
 \item if all $M_j$'s are equal, then $\G$ is of Kac type.
\end{enumerate}
\end{cor}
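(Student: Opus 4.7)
The plan is to treat the four assertions in order, with the subgroup statement following from a universality argument and items (1)--(3) reducing, once the embedding is in place, to short computations invoking standard Woronowicz theory.

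For the subgroup statement, I would rewrite the two modular identities \eqref{eq_modular_prop} as the single matrix relation $u^t M \bar u M^{-1}=I=M\bar u M^{-1} u^t$. Together with the unitarity (U), this is precisely the defining presentation of the van Daele--Wang universal unitary quantum group $A_u(M)$, so universality produces a surjective unital $*$-homomorphism $A_u(M)\to\A$ sending canonical generators to the $u_{ij}$'s and intertwining the coproducts, which by definition realizes $\G$ as a quantum subgroup of $A_u(M)$.

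For item (1), the general formula $\kappa(u_{ij})=(u^{-1})_{ij}$ combined with the matrix identity $u^{-1}=u^*$ (from unitarity) gives $\kappa(u_{kj})=u^*_{jk}$ at once. For $\kappa(u^*_{kj})$ I would use the conjugate corepresentation $\bar u$ from the previous subsection, whose entries are $\bar u_{ij}=u^*_{ij}$; rewriting \eqref{eq_modular_prop} as $M\bar u M^{-1}=(u^t)^{-1}$ yields $\bar u^{-1}=M^{-1}u^t M$, and applying the antipode formula to the corepresentation $\bar u$ gives $\kappa(u^*_{ij})=\kappa(\bar u_{ij})=(\bar u^{-1})_{ij}=\frac{M_j}{M_i}u_{ji}$, which is \eqref{eq_antipode} after relabeling.

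For item (2), a direct computation from \eqref{eq_antipode} yields $\kappa^2(u_{ij})=\frac{M_i}{M_j}u_{ij}$. By Woronowicz's existence theorem for the characters $(f_z)$, the matrix $F=[f_1(u_{ij})]$ is positive invertible, implements $\kappa^2$ on matrix coefficients of $u$ by conjugation, and satisfies $\operatorname{Tr} F=\operatorname{Tr} F^{-1}$ on each irreducible sub-corepresentation. The explicit diagonal action of $\kappa^2$ forces $F$ to be diagonal and proportional to $M=\operatorname{diag}(M_1,M_2,M_3)$, say $F=\lambda M$; the trace normalization $\lambda\sum M_i=\lambda^{-1}\sum 1/M_i$ then yields the claimed value of $\lambda$, and general $f_z$ corresponds to the positive power $F^z$. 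Item (3) is now immediate: if $M_1=M_2=M_3$, the formula for $\kappa^2$ reduces to $\kappa^2=\id$ on every generator, hence on the full Hopf $*$-algebra, which is well known to be equivalent to the Haar state being tracial, i.e.\ to $\G$ being of Kac type.

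The real computational content is already packaged in Theorem \ref{thm_modular}, so the main obstacle is conceptual rather than technical: one must carefully match our relations against the definition of $A_u(M)$ and invoke the uniqueness of Woronowicz's characters in a way that remains valid even when the fundamental corepresentation is reducible, as is possible by Theorem \ref{thm_decomp}. This last point is mildly delicate, but it is handled by observing that $\kappa^2$ still acts on each $u_{ij}$ by the scalar $M_i/M_j$, so the diagonal form $F=\lambda M$ is forced block-by-block and the single scalar $\lambda$ is pinned down by the global trace condition.
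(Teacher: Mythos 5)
Your argument is correct and follows essentially the route the paper intends: the paper gives no written proof beyond observing that \eqref{eq_modular_prop} can be read as $u^tM\bar{u}M^{-1}=I=M\bar{u}M^{-1}u^t$ and then invoking the standard results of \cite{woronowicz87b} and \cite{timmerman}, which is exactly what you spell out (universality of $A_u(M)$, the antipode via $u^{-1}=u^*$ and $\bar{u}^{-1}=M^{-1}u^tM$, the matrix $F$ implementing $\kappa^2$, and $\kappa^2=\id$ for the Kac case). The one caveat concerns item (2) when $u$ is reducible: Woronowicz's normalization $\mathrm{Tr}\,F=\mathrm{Tr}\,F^{-1}$ is imposed separately on each irreducible block rather than globally, so your ``global trace condition'' only pins down a single scalar $\lambda$ when $u$ is irreducible or the blocks happen to agree --- but this imprecision is already present in the statement of the corollary itself, not introduced by your proof.
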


\subsection{Adjoints of generators}
The morphism $R$ will help us recover the formula for adjoints of the 
generators. Let us recall (cf.\ Remark \ref{rem_convention}) that for $r\in
\{1,2,3\}$ we can find $i,k\in \{1,2,3\}$ such that $i\neq k$ and $(r,i,k)\in
S_3$. 
\begin{thm}
 Given $r,n\in \{1,2,3\}$, the adjoint of the generator $u_{rn}$ in $\G$
satisfies 
\begin{equation} \label{eq_R_star} 
u^*_{rn} = \frac{E_{njl}}{E_{rik}} u_{ij} u_{kl}+ \frac{E_{nlj}}{E_{rik}}
u_{il}u_{kj}
\end{equation}
and 
\begin{equation} \label{eq_R_star_adjoint} 
u^*_{rn} = \frac{\bar{E}_{rik}}{\bar{E}_{njl}} u_{ij} u_{kl}+
\frac{\bar{E}_{rki}}{\bar{E}_{njl}} u_{kj}u_{il},
\end{equation}
where $i,k$ and $j,l$ are chosen to complete $r$ and $n$, respectively, to a
permutation. 
\end{thm}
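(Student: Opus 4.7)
The strategy is to translate the intertwining properties of $R$ and of $R^*$ into relations among the generators, exactly as was done earlier in this section with $P$, $Q$, $T$ and $\bar T$. Formula \eqref{eq_R_star} comes from $R\in\Mor(\bar u,u^{\tp 2})$, while formula \eqref{eq_R_star_adjoint} comes from the companion identity $R^*\in\Mor(u^{\tp 2},\bar u)$. In both cases the permutation condition (P) collapses the intertwining equation to a two-term relation: for any ordered pair of distinct indices in $\{1,2,3\}$ there is a unique third index producing a non-zero $E$-coefficient.

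For \eqref{eq_R_star} I would evaluate $(R\ot\id)\bar u=u^{\tp 2}(R\ot\id)$ on the vector $x_n\ot\one$. The left-hand side simplifies to $\sum_s x_s\ot u^*_{sn}=\sum_{a,b,s}E_{sab}\,e_a\ot e_b\ot u^*_{sn}$, while the right-hand side expands as $\sum_{a,b,c,d}E_{nab}\,e_c\ot e_d\ot u_{ca}u_{db}$. Reading off the coefficient of $e_i\ot e_k$ with $(r,i,k)\in S_3$, the permutation condition forces $s=r$ on the left and $(a,b)\in\{(j,l),(l,j)\}$ on the right, producing $E_{rik}u^*_{rn}=E_{njl}u_{ij}u_{kl}+E_{nlj}u_{il}u_{kj}$. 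Dividing by the non-zero scalar $E_{rik}$ gives \eqref{eq_R_star}.

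For \eqref{eq_R_star_adjoint} I would use $\bar u(R^*\ot\id)=(R^*\ot\id)u^{\tp 2}$. Consistently with the paper's definition of the matrix units $\bar m_{jk}$, which treats $\{x_j\}$ as an orthonormal basis of $\bar H$, a short computation gives $R^*(e_a\ot e_b)=0$ for $a=b$ and $R^*(e_a\ot e_b)=\bar E_{sab}\,x_s$ for $a\neq b$, where $(s,a,b)\in S_3$. Evaluating the intertwining identity on $e_j\ot e_l\ot\one$ with $(n,j,l)\in S_3$, the left-hand side reduces to $\bar E_{njl}\sum_s x_s\ot u^*_{sn}$. The right-hand side, regrouped by the value of $s$, becomes $\sum_s x_s\ot\bigl[\bar E_{sik}u_{ij}u_{kl}+\bar E_{ski}u_{kj}u_{il}\bigr]$ with $(s,i,k)\in S_3$, because for each such $s$ only the two pairs $(c,d)=(i,k)$ and $(c,d)=(k,i)$ contribute. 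Matching the coefficients of $x_r$ and dividing by $\bar E_{njl}\neq 0$ yields \eqref{eq_R_star_adjoint}.

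The genuinely delicate step is fixing the right formula for $R^*$: if one inadvertently used the inner product inherited by $\bar H$ as a subspace of $H_2$, spurious factors $M_j=\|x_j\|^2$ would appear and the clean form of \eqref{eq_R_star_adjoint} would be lost. Once the convention fixing the matrix units on $\bar H$ is internalised, the two derivations are essentially mirror images, with $E$ and $\bar E$ swapped and with the row and column indices of the $u$'s interchanged; everything else is routine bookkeeping.
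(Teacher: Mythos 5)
Your derivation of \eqref{eq_R_star} is correct and is exactly the paper's argument: evaluate $(R\ot\id)\bar u=u^{\tp 2}(R\ot\id)$ on $x_n\ot\one$, let the permutation condition pick out the unique row index on the left and the two column pairs on the right, and divide by $E_{rik}\neq 0$. For \eqref{eq_R_star_adjoint} you also follow the paper, which only says ``perform similar computations for $R^*(e_j\ot e_l)=\bar E_{njl}x_n$''; given that formula for $R^*$, your bookkeeping is right.

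The problem is precisely the step you single out as delicate, and you resolve it the wrong way around. The only operator that the W$^*$-axiom (CMW$^*$ III) guarantees to lie in $\Mor(u^{\tp 2},\bar u)$ is the genuine Hilbert-space adjoint of the embedding $R$, i.e.\ the orthogonal projection onto $\bar H\subset H_2$, which is $R^*(e_j\ot e_l)=\tfrac{\bar E_{njl}}{M_n}\,x_n$. Declaring the $x_j$ orthonormal does not rescue the paper's formula: with that convention $R^*R=\mathrm{diag}(M_1,M_2,M_3)$ in the $x$-basis, and one checks directly that this diagonal operator intertwines $\bar u$ with itself only when $u_{st}=0$ whenever $M_s\neq M_t$ --- so the map $(e_j\ot e_l)\mapsto\bar E_{njl}x_n$ is simply not a morphism outside that case. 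Carrying the honest factor $1/M_n$ through your own computation gives
\begin{equation*}
u^*_{rn}=\frac{M_n}{M_r}\left(\frac{\bar E_{rik}}{\bar E_{njl}}\,u_{ij}u_{kl}+\frac{\bar E_{rki}}{\bar E_{njl}}\,u_{kj}u_{il}\right),
\end{equation*}
and this extra factor is not spurious: it is exactly what one obtains by applying the antipode \eqref{eq_antipode} to \eqref{eq_R_star} and taking adjoints, and it is already needed for $U_q(2)$ with $|q|\neq 1$ (compare $u_{23}^*$ computed from the generators with the two candidate formulas) and for $SU_q(3)$ with $q\neq\pm1$. So the identity you prove --- the one stated, without the $M_n/M_r$ --- holds only in the Kac-type situation, and the proof breaks at the point where the formula for $R^*$ is fixed. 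A clean way to avoid the issue entirely is to use, instead of $R^*$, the corestriction $Q_0:H_2\to\bar H$ of $Q$ (so $Q_0(e_j\ot e_l)=\bar E_{jln}x_n$), which is a morphism by construction and yields a correct companion formula with no modular factors.
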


\begin{proof}
Recall that $R$ is the embedding of $\bar{H}$ into $H_2$, where 
$$\bar{H}=\{\sum_{k=1}^3 \alpha_k x_k; \alpha_k\in \C\}, \quad x_k=\sum_{a,b}
E_{kab} e_a \otimes e_b.$$
This means that 
$$ Rx_k = \sum_{a,b} E_{kab} e_a \otimes e_b \in H_2. $$

Using $ \displaystyle \bar{u} = \sum_{s,t} \bar{m}_{st} \otimes u^*_{st}$,
we compare both sides of $(R\otimes \id)\bar{u}= u^{\tp 2}(R\otimes \id)$ when
evaluated on $x_k\otimes \one$:
\begin{eqnarray*}
 (R\otimes \id)\bar{u}(x_k\otimes \one) 
 &=& 
 \sum_{s,t} R\bar{m}_{st} x_k \otimes u^*_{st}
 = \sum_{a,b} e_a \otimes e_b \otimes \left(\sum_{s} E_{sab} u^*_{sk}\right)
\end{eqnarray*}
and	
\begin{eqnarray*}
\lefteqn{ u^{\tp 2}(R\otimes \id)(x_k\otimes \one) 
 =  
 \big( \sum_{i,j,s,t} m_{ij} \otimes m_{st} \otimes
u_{ij} u_{st}\big) \big(\sum_{a,b} E_{kab} e_a \otimes e_b \otimes \one \big) }
\\ &=& 
\sum_{i,j,s,t} \sum_{a,b} E_{kab} m_{ij}e_a \otimes m_{st}e_b \otimes
u_{ij} u_{st}
 = 
\sum_{i,s} e_i \otimes e_s \otimes \left(\sum_{j,t} E_{kjt} u_{ij} u_{st}
\right).
\end{eqnarray*}
Hence
\begin{equation*} 
\sum_{s} E_{sib} u^*_{sk} = \sum_{j,t} E_{kjt} u_{ij} u_{bt}.  
\end{equation*}
Due to the permutation condition (P), for fixed $i\neq b$ there exists the
unique $s$ such that $(i,b,s)\in S_3$ and $E_{sib}\neq 0$, and exactly two pairs
of $(j,t)$ such that $(k,j,t)\in S_3$, therefore
$$u^*_{sk} = \frac{E_{kjt}}{E_{sib}} u_{ij} u_{bt}+ \frac{E_{ktj}}{E_{sib}}
u_{it}u_{bj}.$$
Changing indices appropriately, we get \eqref{eq_R_star}. 

Performing similar computations for $R^*(e_j \otimes e_l) = \bar{E}_{njl}
x_n$, which is a morphisms between $u^{\tp 2}$ and $\bar{u}$, we end up with the
relation \eqref{eq_R_star_adjoint}.
\end{proof}

\section{Commutation relations for generators}
\label{sec_Q}
We devote this section to exhibit the commutation relations between generators.
For that we use the fact that $Q=(E^*\otimes I_2)(I_2 \otimes E)$ and its
adjoint, acting on $H_2$, intertwine $u^{\tp2}$ with itself. It turns out that
the commutation relations depend on the constants $\co{l}{n}$, defined, for
$l\neq n$, as 
\begin{equation} \label{eq_const_charact}
\co{l}{n} : = \frac{E_{jln}}{E_{jnl}}\frac{E_{nlj}}{E_{lnj}}, 
\end{equation}
where $j$ is the unique integer such that $(j,l,n)\in S_3.$
We shall refer to $\co{l}{n}$'s as to \emph{characteristic constants} of the
array $E$. There are six such constants, but it is immediately seen that
they satisfy
\begin{equation} \label{eq_char_const_rel}
 \co{n}{j}\co{j}{n} = 1, \quad \co{n}{l} \co{l}{j}=\co{n}{j}.
\end{equation}
The main result of this Section is the following.
\begin{thm} \label{thm_comm_rel}
Let $l\neq n$, then the generators in $\G$ satisfy the following commutations:
\begin{enumerate}
 \item[(a)] If $\co{l}{n}\neq 1$ then 
\begin{equation} \label{eq_comm_rel_row0}
u_{an} u_{al}=0 \quad \mbox{and} \quad u_{la} u_{na}=0 \quad
(a=1,2,3) 
\end{equation}
and for $r\neq k$ we have 
\begin{equation} \label{eq_comm_rel_not1}
u_{rn} u_{kl} 
=\frac{E_{irk}}{E_{ikr}} \frac{E_{jln}}{E_{jnl}} \frac{1 -
\co{k}{r}}{1-\co{l}{n}} u_{kl} u_{rn}
+ \frac{E_{irk}}{E_{ikr}} \frac{1 - \co{k}{r}\co{l}{n}}{1-\co{l}{n}} u_{kn}
u_{rl}.
\end{equation}

 \item[(b)] If $\co{l}{n}= 1$ then 
\begin{equation} \label{eq_comm_rel_row1}
u_{an} u_{al} = -\frac{ E_{jln}}{E_{jnl}}\, u_{al} u_{an}, \quad 
u_{na}u_{la} = -\frac{\bar{E}_{jln}}{\bar{E}_{jnl}}u_{la} u_{na}
\end{equation}
and
\begin{eqnarray} \label{eq_comm_rel_1}
u_{rn} u_{kl} 
&=& \frac{E_{irk}}{E_{ikr}}\frac{E_{jln}}{E_{jnl}}\left(1+\co{k}{r} \big|\frac
{E_{ikr}}{E_{irk}} \big|^2\right) \left(1+\big|\frac{E_{jln}}{E_{jnl}}\big|^2
\right)^{-1} \,u_{kl} u_{rn} 
\\ \nonumber & & +\, 
\frac{E_{irk}}{E_{ikr}}
\left(1 -
\co{k}{r}\big|\frac{E_{jln}}{E_{jnl}}\big|^2\big|\frac{E_{ikr}}{E_{irk}}\big|^2
\right)\left(1+\big|\frac{E_{jln}}{E_{jnl}}
\big|^2\right)^{-1} \, u_{kn} u_{rl}. 
\end{eqnarray}
\end{enumerate}
\end{thm}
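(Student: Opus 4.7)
The strategy is to exploit the morphism property $Q\in\Mor(u^{\tp 2},u^{\tp 2})$ recorded in Section \ref{sec_morphisms}, which states $u^{\tp 2}(Q\otimes \id) = (Q\otimes \id)u^{\tp 2}$. Thanks to the permutation condition, $Q$ acts on the basis in a very restricted way: $Q(e_x\otimes e_x)=0$, and for $x\neq y$ with $j$ the unique index such that $(j,x,y)\in S_3$,
\[
Q(e_x\otimes e_y) = \bar{E}_{xyj}\bigl(E_{jxy}\,e_x\otimes e_y + E_{jyx}\,e_y\otimes e_x\bigr).
\]
The plan is to evaluate the intertwining identity on $e_n\otimes e_l\otimes \one$ (with $n\neq l$), expand $u^{\tp 2}$ in the matrix-unit basis, and equate the coefficients of $e_i\otimes e_s$ on both sides.

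The diagonal case $i=s$ is immediate: the right-hand side vanishes (every contributing $Q(e_{i'}\otimes e_{s'})$ requires $i'\neq s'$), while the left-hand side collapses, after cancelling $\bar{E}_{nlj}\neq 0$, to $E_{jnl}u_{an}u_{al} + E_{jln}u_{al}u_{an}=0$ for each $a$; this is the first identity of \eqref{eq_comm_rel_row1}. The off-diagonal case produces, for $(i,s)=(r,k)$ with $r\neq k$ (and the third index $i$ determined by $(r,i,k)\in S_3$, cf.\ Remark \ref{rem_convention}), a linear relation among the four products $u_{rn}u_{kl}$, $u_{rl}u_{kn}$, $u_{kl}u_{rn}$, $u_{kn}u_{rl}$. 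Extracting also the coefficient of $e_k\otimes e_r$, and then repeating the whole procedure for the evaluation vector $e_l\otimes e_n\otimes \one$, I obtain a system of four linear equations in these four products. Rewriting its coefficients in terms of the characteristic constants $\co{l}{n}$, $\co{k}{r}$ and the ratios $E_{irk}/E_{ikr}$, $E_{jln}/E_{jnl}$ (and using \eqref{eq_char_const_rel} repeatedly), I expect the determinant of the relevant $2\times 2$ block to be a nonzero multiple of $1-\co{l}{n}$.

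Under case (a), $\co{l}{n}\neq 1$, the system is invertible and a direct solution delivers the commutation \eqref{eq_comm_rel_not1}. The vanishing \eqref{eq_comm_rel_row0} is then extracted by combining \eqref{eq_comm_rel_not1} with its $(r,k)$-swapped analogue and the diagonal anti-commutation: this produces a self-referential identity for $u_{an}u_{al}$ whose coefficient, after simplification by \eqref{eq_char_const_rel}, is a nonzero multiple of $1-\co{l}{n}$ and hence forces $u_{an}u_{al}=0$. Under case (b), $\co{l}{n}=1$, the determinant vanishes and the system is degenerate but compatible; solving the remaining non-degenerate part directly yields \eqref{eq_comm_rel_1}, while the diagonal relation is already the first identity of \eqref{eq_comm_rel_row1}. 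Finally, the column identities $u_{la}u_{na}=0$ and $u_{na}u_{la}=-(\bar{E}_{jln}/\bar{E}_{jnl})u_{la}u_{na}$ follow from their row counterparts by applying the antipode $\kappa$, which by Corollary \ref{cor_modular_prop} is an anti-homomorphism sending $u_{ij}$ to $u_{ji}^*$, and then taking adjoints.

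The main obstacle is the bookkeeping. Both $j$ (attached to $(n,l)$) and $i$ (attached to $(r,k)$) are defined only implicitly by the permutation condition, and recognising $\co{l}{n}$ and $\co{k}{r}$ among the raw $\bar{E}$–$E$ products appearing in the coefficients demands iterated use of \eqref{eq_char_const_rel}. The subtlest point is the derivation of \eqref{eq_comm_rel_row0} from \eqref{eq_comm_rel_not1}: one cannot simply set $r=k$ in \eqref{eq_comm_rel_not1}, so the argument must combine the commutation with its $(r,k)$-swapped version and the diagonal anti-commutation in just the right way to isolate the factor $1-\co{l}{n}$.
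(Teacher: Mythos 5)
There is a genuine gap: your proof uses only the single intertwiner $Q$, whereas the statement --- in particular everything in case (a) --- requires in an essential way the \emph{second} intertwiner $Q^*=(I_2\otimes E^*)(E\otimes I_2)\in\Mor(u^{\tp 2},u^{\tp 2})$ (Section \ref{rel_3.3} of the paper). Concretely: for a fixed pair of columns $(n,l)$, the identity $(Q\otimes\id)u^{\tp 2}=u^{\tp 2}(Q\otimes\id)$, read off entry by entry, produces exactly one relation involving the same-row monomials, namely the single anticommutation $u_{an}u_{al}=-\frac{E_{jln}}{E_{jnl}}u_{al}u_{an}$ of \eqref{eq_Q_row}; all its other entries are linear relations among the mixed monomials $u_{rn}u_{kl}$ with $r\neq k$, or among same-column monomials. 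A single anticommutation can never force $u_{an}u_{al}=0$, and your proposed workaround --- combining \eqref{eq_comm_rel_not1} with its $(r,k)$-swapped version and the diagonal anticommutation --- only ever manipulates the mixed monomials and the row monomials \emph{separately}; no linear combination of those relations can produce a ``self-referential identity'' for $u_{an}u_{al}$, because the two families of monomials never appear in the same $Q$-relation. In the paper, \eqref{eq_comm_rel_row0} is obtained by comparing \eqref{eq_Q_row} with the \emph{second} row anticommutation \eqref{eq_options2} coming from $Q^*$, whose proportionality constant differs from that of \eqref{eq_Q_row} precisely when $\co{l}{n}\neq 1$, forcing both products to vanish.

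The same problem affects \eqref{eq_comm_rel_not1}. Eliminating $u_{rl}u_{kn}$ from the $Q$-system alone yields exactly \eqref{eq_Q_mix12}, whose coefficient of $u_{rn}u_{kl}$ is proportional to $1+\co{l}{n}\,|E_{lnj}/E_{nlj}|^2$, not to $1-\co{l}{n}$, and whose remaining coefficients carry the moduli $|E_{\cdot}/E_{\cdot}|^2$; these cannot reduce to the conjugate-free coefficients $\frac{1-\co{k}{r}}{1-\co{l}{n}}$ and $\frac{1-\co{k}{r}\co{l}{n}}{1-\co{l}{n}}$ of \eqref{eq_comm_rel_not1}. (For instance, for $A_{p,k,m}(3)$ with $k+m\notin 3\N$ one has $\co{l}{n}\neq 1$, yet the $Q$-only elimination returns the \eqref{eq_comm_rel_1}-type coefficients, not \eqref{eq_comm_rel_not1}.) The paper gets \eqref{eq_comm_rel_not1} from \eqref{eq_QA_mix2}, which is the elimination of $u_{rl}u_{kn}$ between \eqref{eq_Q_mix1} (from $Q$) and its counterpart from $Q^*$ (case (C*)), both of which are free of complex conjugates. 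What \emph{is} correct in your plan: the matrix entries of $Q$ and the diagonal case giving the first identity of \eqref{eq_comm_rel_row1}; the derivation of \eqref{eq_comm_rel_1} in case (b), which indeed needs only $Q$ via \eqref{eq_Q_mix12}; and the antipode-plus-adjoint argument transferring row relations to column relations. To repair the proof, add the observation that $Q^*$ is also a morphism (condition CMW$^*$ III, or unitarity of $u^{\tp 2}$), extract the second set of row, column and mixed relations from it, and run the comparison arguments above.
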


\begin{cor} \label{cor_comm_rel}
 If $k\neq r$ and $l\neq n$, then there exist complex constants $A_{r,k}^{n,l}$
and
$B_{r,k}^{n,l}$ such that 
$$u_{rn} u_{kl} = A_{r,k}^{n,l} u_{kl} u_{rn} +B_{r,k}^{n,l} u_{kn} u_{rl}.$$
\end{cor}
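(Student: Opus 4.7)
The plan is to derive Corollary \ref{cor_comm_rel} as a direct, bookkeeping consequence of Theorem \ref{thm_comm_rel}. Under the hypotheses $r \neq k$ and $l \neq n$, Remark \ref{rem_convention} uniquely supplies an index $i \in \{1,2,3\}$ such that $(i,r,k) \in S_3$ and an index $j \in \{1,2,3\}$ such that $(j,n,l) \in S_3$, so that all quantities $E_{irk}, E_{ikr}, E_{jln}, E_{jnl}$ appearing in Theorem \ref{thm_comm_rel} are nonzero by the permutation condition (P). Thus both branches of Theorem \ref{thm_comm_rel} produce expressions that are well-defined elements of $\C$.

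First I would split the argument on whether the characteristic constant $\co{l}{n}$ equals $1$. If $\co{l}{n} \neq 1$, then \eqref{eq_comm_rel_not1} yields immediately the desired identity, and I can simply define
\[
A_{r,k}^{n,l} = \frac{E_{irk}}{E_{ikr}} \frac{E_{jln}}{E_{jnl}} \frac{1 - \co{k}{r}}{1-\co{l}{n}}, \qquad
B_{r,k}^{n,l} = \frac{E_{irk}}{E_{ikr}} \frac{1 - \co{k}{r}\co{l}{n}}{1-\co{l}{n}}.
\]
If, on the other hand, $\co{l}{n} = 1$, then equation \eqref{eq_comm_rel_1} already has exactly the required algebraic shape, so I would read off
\[
A_{r,k}^{n,l} = \frac{E_{irk}}{E_{ikr}}\frac{E_{jln}}{E_{jnl}}\Bigl(1+\co{k}{r}\bigl|\tfrac{E_{ikr}}{E_{irk}}\bigr|^2\Bigr) \Bigl(1+\bigl|\tfrac{E_{jln}}{E_{jnl}}\bigr|^2\Bigr)^{-1},
\]
together with the analogous expression for $B_{r,k}^{n,l}$ coming from the second summand. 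Here the denominator $1 + |E_{jln}/E_{jnl}|^2$ is a strictly positive real number, so the scalars are again well-defined.

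There is no real obstacle: the content of the corollary is not to produce a new identity but to record, for later use (in particular in Sections \ref{sec_additional} and \ref{sec_class}), the common abstract form of the commutation relation, suppressing the explicit but cumbersome coefficients from Theorem \ref{thm_comm_rel}. The only thing to check is that the two cases exhaust all possibilities with $k \neq r$, $l \neq n$, which is immediate since $\co{l}{n} \in \C^{*}$ either equals $1$ or does not.
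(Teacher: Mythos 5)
Your proposal is correct and follows exactly the route the paper intends: the corollary is an immediate consequence of Theorem \ref{thm_comm_rel}, obtained by reading off the coefficients from \eqref{eq_comm_rel_not1} when $\co{l}{n}\neq 1$ and from \eqref{eq_comm_rel_1} when $\co{l}{n}=1$, with the permutation condition (P) guaranteeing that all the $E$-ratios involved are well-defined. The paper states this without proof precisely because the two cases of the theorem exhaust all possibilities, as you note.
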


The rest of this Section is devoted to the proof of Theorem \ref{thm_comm_rel}. 
\subsection{Intertwiner $Q$}
We consider the operator $Q=(E^*\otimes I_2)(I_2 \otimes E):H_2\to H_2$ and
write it as
$$Q=\sum_{a,b,x,y} Q_{ab,xy} m_{ax}\otimes m_{by}, \quad \mbox{where} \quad 
Q_{ab,xy} =\langle e_a\otimes e_b, Q (e_x\otimes e_y) \rangle
= \sum_i E_{iab}\, \bar{E}_{xyi}.$$
Then, with $u=\sum_{l,n}m_{ln} \otimes u_{ln}$, the fact that $Q\in
{\rm Mor}\, (u^{\tp 2}, u^{\tp 2})$, i.e.\  $(Q\otimes I) u^{\tp 2} =
u^{\tp 2} (Q\otimes I)$, means explicitly that
\begin{eqnarray*}
\sum_{l,n,l',n'} [Q \cdot (m_{ln}\otimes m_{l'n'})] \otimes u_{ln} u_{l'n'}&=&
\sum_{l,n,l',n'} [(m_{ln}\otimes m_{l'n'}) \cdot Q] \otimes u_{ln} u_{l'n'}. 
\end{eqnarray*}
Application of $m_{ab}m_{cd}=\delta_{b,c} m_{ad}$ to express
\begin{eqnarray*}
 Q \cdot (m_{ln}\otimes m_{l'n'}) 
 & = & 
 \sum_{a,b,x,y} Q_{ab,xy} (m_{ax}m_{ln}\otimes m_{by}m_{l'n'}) 
 = \sum_{a,b} Q_{ab,ll'}\, m_{an}\otimes m_{bn'}, \\
(m_{ln}\otimes m_{l'n'}) \cdot Q
 & = & \sum_{a,b,x,y} Q_{ab,xy} (m_{ln}m_{ax}\otimes m_{l'n'}m_{by}) 
 = \sum_{x,y} Q_{nn',xy}\, m_{lx}\otimes m_{l'y},
\end{eqnarray*}
leads to
\begin{eqnarray*}
\sum_{a,n,b,n'} m_{an}\otimes m_{bn'} \otimes \sum_{l,l'} Q_{ab,ll'}\,
u_{ln} u_{l'n'}&=&
\sum_{l,x,l',y} m_{lx}\otimes m_{l'y} \otimes \sum_{n,n'} Q_{nn',xy}\, u_{ln}
u_{l'n'} 
\end{eqnarray*}
Comparing the coefficients of the same matrix units we find out that for fixed
$A,B,X,Y$ 
\begin{eqnarray}
\label{eq_Q_main}
\sum_{l,l'} Q_{AB,ll'}\, u_{lX} u_{l'Y}&=&
\sum_{n,n'} Q_{nn',XY}\, u_{An} u_{Bn'} 
\end{eqnarray}
or explicitly (changing indices for convenience) 
\begin{equation} \label{eq_Q_gen}
\sum_i E_{iab} \sum_{r,k}  \bar{E}_{rki} \, u_{rx} u_{ky}
= \sum_j \bar{E}_{xyj} \sum_{n,l} E_{jnl}\, u_{an} u_{bl}.
\end{equation}

\subsection{Relations from $Q$}
\label{ssec_Q}

It follows from the permutation assumption (P) that whenever $a=b$ and $x=y$,
both sides of \eqref{eq_Q_gen} equal zero. On the other hand, if $a\neq b$
then there exists the unique element $i$ such that $(i,a,b)\in S_3$ and
$E_{iab}\neq 0$. Similarly, for $x\neq y$ there is the unique $j$ such that
$(j,x,y)\in S_3$ and $E_{xyj}\neq 0$. Consider the following three cases.
\begin{enumerate}
 \item[{\bf (A)}] 
If $a=b$ and $x\neq y$, then $E_{xyj}\neq 0$ for the unique $j$ described above,
but
$E_{iab}=0$, and \eqref{eq_Q_gen} reduces to 
 $$0=\sum_{n,l} E_{jnl}\, u_{an} u_{al} = E_{jnl}\, u_{an} u_{al}  +
E_{jln}\, u_{al} u_{an}, $$
where in the last part the indices $n$ and $l$ are chosen in such a way that
$(j,l,n)\in S_3$. This implies that two elements in the same row satisfy the
relation 
\begin{equation} \label{eq_Q_row}
 u_{an} u_{al} = -\frac{ E_{jln}}{E_{jnl}}\, u_{al} u_{an}.
\end{equation}

 \item[{\bf (B)}] 
If $a\neq b$ and $x=y$, then $E_{iab}\neq 0$ for the unique $i$
and thus from \eqref{eq_Q_gen} we get the following relation between elements in
one column
\begin{equation} \label{eq_Q_col0}
 u_{rx} u_{kx} =- \frac{\bar{E}_{kri}}{\bar{E}_{rki}} \, u_{kx} u_{rx}. 
\end{equation}

 \item[{\bf (C)}] 
Assume that $a\neq b$ and $x\neq y$, and let $i$ and $j$ denote the unique
indices described above and let $(r,k,i), (j,n,l)\in S_3$. Then \eqref{eq_Q_gen}
leads to
$$\frac{\bar{E}_{rki}}{\bar{E}_{xyj}} \, u_{rx} u_{ky} +
\frac{\bar{E}_{kri}}{\bar{E}_{xyj}} \, u_{kx} u_{ry}
= \frac{E_{jnl}}{E_{iab}}\, u_{an} u_{bl}
+\frac{E_{jln}}{E_{iab}}\, u_{al} u_{bn}.$$
\end{enumerate}
%
%
We see that $\{a,b\}=\{r,k\}$ and $\{x,y\}=\{l,n\}$, so expressing the last
formula for $a=r,b=k,x=l,y=n$ and for $a=k,b=r,x=l,y=n$, and comparing the two
results we find out that
\begin{equation} \label{eq_Q_mix1}
\frac{E_{jnl}}{E_{irk}}\, u_{rn} u_{kl}
+\frac{E_{jln}}{E_{irk}}\, u_{rl} u_{kn}
= \frac{E_{jnl}}{E_{ikr}}\, u_{kn} u_{rl}
+\frac{E_{jln}}{E_{ikr}}\, u_{kl} u_{rn}.
\end{equation}
%
%
On the other hand, taking $a=r,b=k$ and $x=l,y=n$ or $x=n,y=l$ leads to
\begin{equation} \label{eq_Q_mix2}
\frac{\bar{E}_{rki}}{\bar{E}_{lnj}} \, u_{rl} u_{kn} +
\frac{\bar{E}_{kri}}{\bar{E}_{lnj}} \, u_{kl} u_{rn}
= \frac{\bar{E}_{rki}}{\bar{E}_{nlj}} \, u_{rn} u_{kl} +
\frac{\bar{E}_{kri}}{\bar{E}_{nlj}} \, u_{kn} u_{rl}.
\end{equation}

Moreover, if we compare \eqref{eq_Q_mix1} and \eqref{eq_Q_mix2}
we find out that
\begin{equation*} 
\left( \frac{E_{jnl}}{E_{jln}}+\frac{\bar{E}_{lnj}}{\bar{E}_{nlj}} \right)
u_{rn} u_{kl} 
-\left( \frac{E_{irk}}{E_{ikr}}+\frac{\bar{E}_{kri}}{\bar{E}_{rki}}\right)
\,u_{kl} u_{rn} 
=
\left( \frac{E_{jnl}}{E_{jln}}\frac{E_{irk}}{E_{ikr}}
-\frac{\bar{E}_{lnj}}{\bar{E}_{nlj}}  \frac{\bar{E}_{kri}}{\bar{E}_{rki}}
\right)\, u_{kn} u_{rl}
\end{equation*}
%
%
or, in terms of the characteristic constants, 
\begin{equation} \label{eq_Q_mix12}
\begin{array}{c}    \displaystyle
\frac{E_{jnl}}{E_{jln}}\big( 1+ \co{l}{n}
\left|\frac{E_{lnj}}{E_{nlj}}\right|^2 \big) u_{rn} u_{kl} 
-\frac{E_{irk}}{E_{ikr}}\big( 1+ \co{k}{r} 
\left| \frac{E_{kri}}{E_{rki}}\right|^2 \big)\,u_{kl} u_{rn} \\
\displaystyle
=  \frac{E_{jnl}}{E_{jln}}\frac{E_{irk}}{E_{ikr}}
\big(1 - \co{l}{n} \co{k}{r} 
\left| \frac{E_{kri}}{E_{rki}}\right|^2
\left|\frac{E_{lnj}}{E_{nlj}}\right|^2 \big)\, u_{kn} u_{rl}.
\\ \\ \end{array} 
\end{equation}

\subsection{Intertwiner $Q^*$}
\label{rel_3.3}
Consider now the adjoint of $Q$, i.e. $Q^*=(I_2\otimes E^*)(E\otimes I_2) : H_2
\to H_2$. It follows from the condition CMW$^*$ III in \cite{woronowicz88} that
$Q^*\in \Mor (u^{\tp 2},u^{\tp 2})$, so the relation \eqref{eq_Q_main} holds
with $Q_{ab,xy}$ replaced by 
$$Q^*_{ab,xy}=\langle e_a\otimes e_b, Q^* (e_x\otimes e_y) \rangle 
= \sum_k E_{abk}\, \bar{E}_{kxy}. $$
Thus
\begin{equation*}
\sum_k E_{abk}\, \sum_{r,i} \bar{E}_{kri}u_{rx} u_{iy} =
\sum_n \bar{E}_{nxy} \sum_{j,l} E_{jln}\, u_{aj} u_{bl} .
\end{equation*}

\begin{enumerate}
 \item[{\bf (A*)}]  If $a\neq b$ and $x=y$ then 
$ \displaystyle 0 = \sum_{r,i} \bar{E}_{kri}u_{rx} u_{ix} =
\bar{E}_{kri}u_{rx} u_{ix} + \bar{E}_{kir}u_{ix} u_{rx}.$
Hence
\begin{equation} \label{eq_Q_col}
u_{rx} u_{ix} = - \frac{\bar{E}_{kir}}{\bar{E}_{kri}} u_{ix} u_{rx},
\end{equation}
which compared with \eqref{eq_Q_col0}: $ u_{rx} u_{ix} =-
\frac{\bar{E}_{irk}}{\bar{E}_{rik}} \, u_{ix} u_{rx}$ yields, 
for any triple $(r,i,k)\in S_3$, the implication
\begin{equation} \label{eq_options1}
\mbox{if} \qquad  
\co{i}{r}= \frac{\bar{E}_{kir}}{\bar{E}_{kri}}
\frac{\bar{E}_{rik}}{\bar{E}_{irk}} \neq 1 
\qquad \mbox{then} \qquad \forall x\in \{1,2,3\} : \, u_{ix} u_{rx}=u_{rx}
u_{ix}=0. 
\end{equation}

 \item[{\bf (B*)}] Similarly, when $a=b$ and $x\neq y$ we get
$$ u_{aj} u_{al} =- \frac{E_{ljn}}{E_{jln}}\, u_{al} u_{aj},$$
which, in turn, compared with \eqref{eq_Q_row}: 
$u_{aj} u_{al} = -\frac{E_{nlj}}{E_{njl}}\, u_{al} u_{aj},$
leads to 
\begin{equation} \label{eq_options2}
\mbox{if} \qquad  
\co{l}{j}= \frac{E_{nlj}}{E_{njl}}\frac{E_{jln}}{E_{ljn}} \neq 1 
\qquad \mbox{then} \qquad \forall a\in \{1,2,3\} : \, u_{aj} u_{al}=u_{al}
u_{aj}=0. 
\end{equation}

 \item[{\bf (C*)}] Finally, assume that $a\ne b$ and $x\neq y$. Then
(similarly as before, but changing indices to make the comparision easier) 
 $$\sum_{r,k} \frac{\bar{E}_{irk}}{\bar{E}_{nxy}} u_{rx} u_{ky}
 =\sum_{n,l} \frac{E_{nlj}}{E_{abi}}\, u_{an} u_{bl}  $$
implies
$$\frac{E_{nlj}}{E_{rki}}\, u_{rn} u_{kl} 
+ \frac{E_{lnj}}{E_{rki}}\, u_{rl} u_{kn}
= 
\frac{E_{nlj}}{E_{kri}}\, u_{kn} u_{rl}
+\frac{E_{lnj}}{E_{kri}}\, u_{kl} u_{rn}.$$ 
\end{enumerate}
%
The relation compared with \eqref{eq_Q_mix1} gives
$$\left( \frac{E_{jnl}}{E_{jln}}\,
- \frac{E_{nlj}}{E_{lnj}} \right) u_{rn} u_{kl} 
-\left(\frac{E_{irk}}{E_{ikr}} - \frac{E_{rki}}{E_{kri}}\right) u_{kl} u_{rn}
= \left( \frac{E_{jnl}}{E_{jln}}\, \frac{E_{irk}}{E_{ikr}}
- \frac{E_{nlj}}{E_{lnj}}\frac{E_{rki}}{E_{kri}} \right) u_{kn} u_{rl}.$$
or, equivalently,
\begin{equation} \label{eq_QA_mix2}
\frac{E_{jnl}}{E_{jln}}\left( 1 - \co{l}{n} \right) u_{rn} u_{kl} 
-\frac{E_{irk}}{E_{ikr}}\left(1 - \co{k}{r} \right) u_{kl} u_{rn}
= \frac{E_{jnl}}{E_{jln}}\, \frac{E_{irk}}{E_{ikr}} \left( 1
- \co{l}{n}\co{k}{r} \right) u_{kn} u_{rl}.
\end{equation}

\begin{proof}[Proof of Theorem \ref{thm_comm_rel}]
When $\co{l}{n}\neq 1$, the relations \eqref{eq_options1} and
\eqref{eq_options2} imply \eqref{eq_comm_rel_row0}, and \eqref{eq_QA_mix2} gives
\eqref{eq_comm_rel_not1}.  On the other hand, if $\co{l}{n}=1$,
then \eqref{eq_Q_col} and \eqref{eq_Q_row} yield \eqref{eq_comm_rel_row1}. When
$\co{l}{n}=1$, the relation \eqref{eq_QA_mix2} becomes trivial, but
\eqref{eq_Q_mix12} allows to get \eqref{eq_comm_rel_1}. 
\end{proof}

\section{Additional results}
\label{sec_additional}
In this Section we gather three additional results of different types, which we
prove without referring directly to morphisms properties (yet we do use the
commutation relations proved in the previous sections). 

\subsection{Isomorphism lemma}
We remarked in Section \ref{sec_prelim} that, without loss of generality, when
studying PW-quantum groups we can assume the array $E$ to be normalized to
$E_{123}=1$. The lemma which follows shows that much more freedom is
at hand; we can also permute the indices in the array $E$ without affecting
the PW-quantum group.

\begin{lemma} \label{lem_iso}
 Let $\G=(\A, u)$ and $\tilde{\G}=(\tilde{\A}, \tilde{u})$ be two PW-quantum
groups related to the $N$-dimensional arrays $E$ and $\tilde{E}$, respectively.
If there exists a permutation $\sigma\in S_N$ and a constant $c\neq
0$ such that  
$$E_{\sigma(i_1), \ldots, \sigma(i_N)}=c\tilde{E}_{i_1\ldots i_N} \quad \mbox{
for
all } \quad (i_1\ldots i_N)\in S_N,$$
then the groups $\G$ and $\tilde{\G}$ are isomorphic. 
\end{lemma}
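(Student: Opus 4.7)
The plan is to exhibit an explicit isomorphism induced by simultaneous row--column permutation of the fundamental corepresentation. The natural candidate is to define, inside $\A$, the matrix $u'=[u'_{ij}]_{i,j=1}^N$ with entries
$$u'_{ij}:=u_{\sigma(i),\sigma(j)},$$
and to show that the pair $(\A,u')$ solves the universal problem defining $\tilde{\G}$. Simultaneous permutation (rather than a one-sided one) is forced on us because in (TD) the indices $a_1,\ldots,a_N$ play the same combinatorial role as $i_1,\ldots,i_N$, so if we only relabelled the columns, the outer constant on the right-hand side of (TD) would not transform correctly.

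First I would verify the unitarity condition \eqref{U} for $u'$. This is immediate: $\sum_s u'_{js}(u'_{ks})^*=\sum_s u_{\sigma(j),\sigma(s)}u^*_{\sigma(k),\sigma(s)}$, and the change of summation variable $t=\sigma(s)$ reduces it to the unitarity of $u$ (and symmetrically for the other identity). Next I would check \eqref{TD} for $u'$ and $\tilde{E}$. Plugging in and substituting $j_k=\sigma(i_k)$ gives
$$\sum_{i_1,\ldots,i_N}\tilde{E}_{i_1,\ldots,i_N}u'_{a_1i_1}\cdots u'_{a_Ni_N}=\tfrac{1}{c}\sum_{j_1,\ldots,j_N}E_{j_1,\ldots,j_N}u_{\sigma(a_1)j_1}\cdots u_{\sigma(a_N)j_N},$$
where I used the hypothesis in the form $\tilde{E}_{\sigma^{-1}(j_1),\ldots,\sigma^{-1}(j_N)}=c^{-1}E_{j_1,\ldots,j_N}$. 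By (TD) for $u$ this equals $c^{-1}E_{\sigma(a_1),\ldots,\sigma(a_N)}\cdot 1_\A=\tilde{E}_{a_1,\ldots,a_N}\cdot 1_\A$, as required. Since $\{u'_{ij}\}$ is just a relabelling of $\{u_{ij}\}$, these entries generate $\A$.

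By the universal property of $\tilde{\A}$ there is a unital $*$-homomorphism $\phi\colon\tilde{\A}\to\A$ sending $\tilde{u}_{ij}\mapsto u'_{ij}=u_{\sigma(i),\sigma(j)}$. Applying exactly the same construction to $(\tilde{\A},\tilde{u})$ with the permutation $\sigma^{-1}$ and the constant $c^{-1}$ (the hypothesis is symmetric, since it reads $E_{j_1,\ldots,j_N}=c\,\tilde{E}_{\sigma^{-1}(j_1),\ldots,\sigma^{-1}(j_N)}$), one obtains a $*$-homomorphism $\psi\colon\A\to\tilde{\A}$ with $\psi(u_{ij})=\tilde{u}_{\sigma^{-1}(i),\sigma^{-1}(j)}$. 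Composing $\psi\circ\phi$ and $\phi\circ\psi$ on generators gives the identity, so $\phi$ is a $*$-isomorphism of C$^*$-algebras.

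It remains to verify that $\phi$ intertwines the coproducts, which certifies it as an isomorphism of compact quantum groups. This is a one-line computation:
$$(\phi\otimes\phi)\Delta_{\tilde{\G}}(\tilde{u}_{ij})=\sum_{k}u_{\sigma(i),\sigma(k)}\otimes u_{\sigma(k),\sigma(j)}=\sum_{l}u_{\sigma(i),l}\otimes u_{l,\sigma(j)}=\Delta_\G(\phi(\tilde{u}_{ij})),$$
the middle equality by reindexing $l=\sigma(k)$. I do not expect any genuine obstacle; the only point requiring care is the bookkeeping of the permutation: one must permute rows and columns by the \emph{same} $\sigma$ in order for the constant $E_{a_1,\ldots,a_N}$ on the right-hand side of (TD) to transform into $\tilde{E}_{a_1,\ldots,a_N}$ rather than into something spurious.
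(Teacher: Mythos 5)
Your proposal is correct and follows essentially the same route as the paper: both define the map $\tilde{u}_{kn}\mapsto u_{\sigma(k),\sigma(n)}$ and check that it preserves the unitarity and twisted determinant conditions, hence extends to a $*$-isomorphism compatible with the Hopf structure. Your version is if anything slightly more complete, since you explicitly construct the inverse map via $\sigma^{-1}$ and $c^{-1}$ rather than just invoking bijectivity on generators.
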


\begin{proof}
Let $u_{kn}$, $1\leq k,n \leq N$, are the generators of $\G$ and
$\tilde{u}_{kn}$ the generators of $\tilde{\G}$. Consider the transformation 
$$\Phi: \tilde{\mathbb{G}} \ni \tilde{u}_{kn} \mapsto u_{\sigma(k),\sigma(n)}
\in
\mathbb{G}. $$
We will show that it is well defined and thus extends to a (unital)
$*$-homomorphism on ${\rm Pol}(\tilde{\G})$. But, if this is the case, then
$\Phi$ (being a bijection between the generators of universal algebras) will be
an isomorphism between $\tilde{\mathbb{G}}$ and $\G$. Obviously, it will also
preserve the Hopf-algebra structure 
$$ \Delta \circ \Phi = (\Phi \otimes \Phi) \circ \tilde{\Delta}, \quad 
\e \circ \Phi=\tilde{\e}
\quad \mbox{and} \quad \kappa \circ \Phi = \Phi \circ \tilde{\kappa}.$$ 
 
To prove that $\Phi$ is well-defined, we need to show that the relations which
exist between generators in $\tilde{\mathbb{G}}$ agree (when transformed by
$\Phi$) with those 
between generators in $\G$. For that it is enough to
look on the defining relation from the Woronowicz theorem, namely, the unitarity
\eqref{U} and the twisted determinant condition \eqref{TD}. In the first case we
have
\begin{eqnarray*}
\delta_{kn}\tilde{1}= \sum_{s}^{} \tilde{u}_{sk}^*\tilde{u}_{sn}
&\Rightarrow &
\delta_{kn} \Phi(\tilde{1}) 
=\sum_{s} \Phi(\tilde{u}_{ks})\Phi(\tilde{u}_{ns})^* = \sum_{v=\sigma(s)}
u_{\sigma(k),v}u_{\sigma(n),v}^* 
\\ &\Rightarrow &
\delta_{kn}1= \delta_{\sigma^{-1}(k), \sigma^{-1}(n)}1= \sum_{v}^{} u_{kv}
u_{nv}^* 
\end{eqnarray*}
and we recover the unitarity relation for $\G$. Similarly, $\Phi$ acting
on  $\displaystyle \delta_{kn}\tilde{1}= \sum_{s}^{}
\tilde{u}_{sk}^*\tilde{u}_{sn}$ gives $\delta_{kn}1= \sum_{v}^{} u_{vk}^*
u_{vn}$ which agrees with the definition of $\G$. 

On the other hand, applying $c\cdot \Phi$ to (TD) in $\tilde{\mathbb{G}}$
$$\sum_{i_1\ldots i_N } \tilde{E}_{i_1\ldots i_N} \tilde{u}_{a_1i_1}\ldots
\tilde{u}_{a_1i_1} = \tilde{E}_{a_1\ldots a_N}\cdot \tilde{1}$$
we get 
$$\sum_{i_1\ldots i_N}
u_{\sigma(a_1),\sigma(i_1)}\ldots
u_{\sigma(a_N),\sigma(i_N)}E_{\sigma(i_1),\ldots,\sigma(i_N)} =
E_{\sigma(a_1),\ldots,\sigma(a_N)}\cdot 1,$$
which is the twisted determinant condition for $\G$. 
\end{proof}
\subsection{Partial isometries}
The next result shows that in special cases the  generators $u_{rn}$ must 
be normal partial isometries. In view of Theorem \ref{thm_comm_rel}, this can
happen when (some) $\co{l}{n}$'s  differ from 1. 

\begin{thm} \label{thm_pi}
Let $\G=(\A, u)$ be a PW-compact quantum group. Let $R$ and $J$ be fixed indices
$(R,J\in \{1,2,3\})$ such that  
\begin{equation} \label{eq_part_isom_basic}
u_{Rj}u_{ij} =0, \quad u_{rJ}u_{rl} =0 \quad \mbox{whenever} \quad i\neq R,\; 
l\neq J, \; r,j=1,2,3.
\end{equation}
Then the element $u_{RJ}$ is a normal partial isometry. 
\end{thm}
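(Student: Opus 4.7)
The plan is to extract two short identities for $u_{RJ}$ by combining each unitarity relation \eqref{U} with one of the hypotheses \eqref{eq_part_isom_basic}, and then to compare them to obtain simultaneously the partial isometry identity and normality. First, I would take the row unitarity $\sum_s u_{Rs}u_{Rs}^* = 1_\A$ and left-multiply by $u_{RJ}$. The second hypothesis gives $u_{RJ}u_{Rl}=0$ for every $l\neq J$, which kills all terms except $l=J$, leaving
\[ u_{RJ} \;=\; u_{RJ}^{\,2}\,u_{RJ}^*. \qquad (\mathrm{A}) \]

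Symmetrically, I would apply the column unitarity $\sum_i u_{iJ}^* u_{iJ} = 1_\A$ and right-multiply by $u_{RJ}$. The first hypothesis directly gives $u_{RJ}u_{iJ}=0$ for $i\neq R$; by \eqref{eq_Q_col0}, same-column generators satisfy an anticommutation up to a nonzero scalar, so this also forces $u_{iJ}u_{RJ}=0$ for $i\neq R$. Only the $i=R$ term survives, yielding
\[ u_{RJ} \;=\; u_{RJ}^*\,u_{RJ}^{\,2}. \qquad (\mathrm{B}) \]

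Now set $e := u_{RJ}^* u_{RJ}$ and $f := u_{RJ} u_{RJ}^*$. Left-multiplying $(\mathrm{A})$ by $u_{RJ}^*$ gives $e = u_{RJ}^* u_{RJ}^{\,2} u_{RJ}^* = e f$, and right-multiplying $(\mathrm{B})$ by $u_{RJ}^*$ gives $f = u_{RJ}^* u_{RJ}^{\,2} u_{RJ}^* = e f$. Hence $e = e f = f$, which is exactly normality $u_{RJ}^* u_{RJ} = u_{RJ} u_{RJ}^*$. Finally, the identity $(\mathrm{A})$ rewrites as $u_{RJ} = u_{RJ}\, f = u_{RJ}\, e = u_{RJ} u_{RJ}^* u_{RJ}$, using $e=f$; this is the partial isometry relation, and combined with normality it shows that $u_{RJ}$ is a normal partial isometry.

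The only delicate point is the asymmetric use of the two unitarities---one attacked on the left by $u_{RJ}$ and the other on the right---which is what makes both $(\mathrm{A})$ and $(\mathrm{B})$, after sandwiching by $u_{RJ}^*$, collapse to the common expression $u_{RJ}^* u_{RJ}^{\,2} u_{RJ}^*$ and therefore force $e = f$. The passage from $u_{RJ}u_{iJ}=0$ to $u_{iJ}u_{RJ}=0$ is the other small step that must be made explicit, but it is immediate from the permutation assumption on $E$ via \eqref{eq_Q_col0}.
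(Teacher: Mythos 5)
Your proof is correct, and it is genuinely different from (and more streamlined than) the argument in the paper. Both of your key identities check out: left-multiplying $\sum_s u_{Rs}u_{Rs}^*=1$ by $u_{RJ}$ and killing the $s\neq J$ terms with the row hypothesis gives $u_{RJ}=u_{RJ}^2u_{RJ}^*$, while right-multiplying $\sum_s u_{sJ}^*u_{sJ}=1$ by $u_{RJ}$ and killing the $s\neq R$ terms gives $u_{RJ}=u_{RJ}^*u_{RJ}^2$; the needed flip from $u_{RJ}u_{sJ}=0$ to $u_{sJ}u_{RJ}=0$ is indeed immediate from \eqref{eq_Q_col0}, whose coefficient is well defined and nonzero by the permutation condition. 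The sandwich argument $e=ef=f$ then delivers normality and the partial-isometry identity simultaneously. The paper proceeds quite differently: it obtains the partial-isometry relation by multiplying the column unitarity by $u_{RJ}^*$ on the \emph{right}, which forces it to prove $u_{kJ}u_{RJ}^*=0$ via the adjoint expansion \eqref{eq_R_star}; and for normality it subtracts the modular relation \eqref{eq_modular_prop} from (U), picking up the ratio $M_R/M_J$ and deriving $(u_{RJ})^2u_{RJ}^*=\frac{M_R}{M_J}u_{RJ}$ and $u_{RJ}^*(u_{RJ})^2=(u_{RJ})^2u_{RJ}^*$ before combining them. Your route avoids both the adjoint formulas and the modular constants entirely, using only (U), the hypothesis \eqref{eq_part_isom_basic}, and the same-column anticommutation \eqref{eq_Q_col0}; what you lose relative to the paper is only the by-product identity $(u_{RJ})^2u_{RJ}^*=\frac{M_R}{M_J}u_{RJ}$ (labelled \eqref{eq_pi_norm_square} there), which is not needed for the statement itself.
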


\begin{proof}
Given $R$ and $J$ we fix the indices $i,k$ and $n,l$ such that $(R,k,i),
(J,n,l)\in S_3$. When $k\neq R$, then using \eqref{eq_R_star} we show that 
\begin{eqnarray*}
 u_{kJ}u_{RJ}^* &=& \frac{E_{Jnl}}{E_{Rki}} \underbrace{u_{kJ}u_{kn}} u_{il}+
\frac{E_{Jln}}{E_{Rki}} \underbrace{u_{kJ} u_{kl}}u_{in}
\stackrel{\eqref{eq_part_isom_basic}}{=}0.
\end{eqnarray*}
The second relation of (U) for $j=k=J$ multiplied by
$u_{RJ}^*$ from the right yields
\begin{eqnarray*}
 u_{RJ}^*= u_{RJ}^*u_{RJ}u_{RJ}^* + u_{kJ}^*(u_{kJ}u_{RJ}^*) +
u_{iJ}^*(u_{iJ}u_{RJ}^*) = u_{RJ}^*u_{RJ}u_{RJ}^* ,
\end{eqnarray*}
so $u_{RJ}$ is a partial isometry. 

Now let us compare the relations (U) with the first relation in
\eqref{eq_modular_prop}:
\begin{eqnarray*}
\displaystyle  u_{RJ}u_{RJ}^* + u_{Rn}u_{Rn}^* + u_{Rl}u_{Rl}^* &=& 1, \\ 
\displaystyle \frac{M_R}{M_J} u_{RJ}^* u_{RJ} + \frac{M_R}{M_n} u_{Rn}^* u_{Rn}
+ \frac{M_R}{M_l} u_{Rl}^* u_{Rl} &=& 1.
\end{eqnarray*}
If we substract the first relation from the second one and multiply it by
$u_{RJ}^*$, we get
\begin{eqnarray*}
 0
&=& u_{RJ}(u_{RJ}^*)^2 - \frac{M_R}{M_J} u_{RJ}^* u_{RJ}u_{RJ}^* +
u_{Rn}(\underbrace{u_{RJ}u_{Rn}}_{=0})^* 
- \frac{M_R}{M_n}u_{Rn}^* (\underbrace{u_{Rn}u_{RJ}^*}_{=0}) 
\\ & & + \ 
u_{Rl}(\underbrace{u_{RJ}u_{Rl}}_{=0})^* - \frac{M_R}{M_l}u_{Rl}^*
(\underbrace{u_{Rl}u_{RJ}^*}_{=0}),
\end{eqnarray*}
since, by \eqref{eq_R_star_adjoint}, one gets 
\begin{eqnarray*}
 u_{Rn}u_{RJ}^* &=& 
\frac{\bar{E}_{Rik}}{\bar{E}_{Jnl}} \underbrace{u_{Rn}u_{in}} u_{kl}+
\frac{\bar{E}_{Rki}}{\bar{E}_{Jnl}} \underbrace{u_{Rn}u_{kn}} u_{il}
\stackrel{\eqref{eq_part_isom_basic}}{=}0 \quad \mbox{whenever}\quad n\neq J. 
\end{eqnarray*}
Therefore (keeping in mind that $M_k>0$) one gets
\begin{equation} \label{eq_pi_norm_square}
(u_{RJ})^2u_{RJ}^* = \frac{M_R}{M_J} u_{RJ} u_{RJ}^* u_{RJ} =
\frac{M_R}{M_J} u_{RJ}.
\end{equation}
Similarly, multiplying the same expression by $u_{RJ}$ (from the right)
we get
$u_{RJ}^* (u_{RJ})^2=\frac{M_R}{M_J}u_{RJ}u_{RJ}^*u_{RJ}$ and thus
\begin{equation} \label{eq_pi_almost_normal}
 u_{RJ}^* (u_{RJ})^2 = (u_{RJ})^2u_{RJ}^* .
\end{equation}
Finally, combining these results we show that
$$ u_{RJ}u_{RJ}^* \stackrel{\eqref{eq_pi_norm_square}}{=}
\frac{M_J}{M_R} (u_{RJ}^2u_{RJ}^*)u_{RJ}^*
= \frac{M_J}{M_R} u_{RJ} \big(u_{RJ}(u_{RJ}^*)^2 \big)
\stackrel{\eqref{eq_pi_almost_normal}}{=}
\frac{M_J}{M_R} u_{RJ} (u_{RJ}^*)^2 u_{RJ}
\stackrel{\eqref{eq_pi_norm_square}}{=}
u_{RJ}^*u_{RJ},$$
i.e\ $u_{RJ}$ is normal.
\end{proof}

\subsection{Commutation with adjoint}
We add to the list of relations, the commutation relation between
elements standing in two different rows and two different columns. This relation
is true for elements standing at the position for which a condition on
characteristic constants holds. 

\label{ssec_cross_rel}
\begin{lemma} \label{lem_ij_rn_star}
Let $\G$ be a PW-quantum group and let us fix the triples $(r,i,k),
(n,j,l) \in S_3$ for which $\co{k}{i}=1$ and $\co{j}{l}=1$. Then
\begin{equation} \label{eq_ij_rn_star}
u_{ij}u_{rn}^* = A_{i,k}^{j,l} u_{rn}^*  u_{ij}, 
\end{equation}
where $A_{i,k}^{j,l}$ is the non-zero constant that appears in Equation
\eqref{eq_comm_rel_1}.
\end{lemma}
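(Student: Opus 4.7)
The plan is to substitute the expression for $u_{rn}^*$ from \eqref{eq_R_star_adjoint} and then carry $u_{ij}$ across the resulting degree-two monomials using the commutation relations available in the $2\times 2$ subblock generated by $u_{ij}, u_{il}, u_{kj}, u_{kl}$. The key observation is that the double hypothesis $\co{k}{i}=\co{j}{l}=1$ places every pair of generators inside this subblock under case (b) of Theorem \ref{thm_comm_rel}: row relations (scalar $-q$, where $q = E_{nlj}/E_{njl}$), column relations (scalar $-\bar p$, where $p = E_{rki}/E_{rik}$), and the ``diagonal'' relation $u_{ij}u_{kl} = A\,u_{kl}u_{ij} + B\,u_{kj}u_{il}$ with exactly the constants $A = A_{i,k}^{j,l}$ and $B = B_{i,k}^{j,l}$ from \eqref{eq_comm_rel_1}.

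First I would write $u_{rn}^* = \frac{\bar E_{rik}}{\bar E_{njl}}\bigl(u_{ij}u_{kl}+\bar p\,u_{kj}u_{il}\bigr)$ and then compute both $u_{ij}u_{rn}^*$ and $u_{rn}^* u_{ij}$ by pushing $u_{ij}$ through. The two essential simplifications are the chain $u_{ij}u_{kj}u_{il} = -\bar p\,u_{kj}u_{ij}u_{il} = \bar p q\,u_{kj}u_{il}u_{ij}$ (one column relation followed by one row relation) and the reduction $u_{ij}^{2}u_{kl} = A^{2}u_{kl}u_{ij}^{2} + B(A+\bar p q)\,u_{kj}u_{il}u_{ij}$ (one diagonal commutation followed by the same row/column chain). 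Collecting terms in the basis $\{u_{kl}u_{ij}^{2},\ u_{kj}u_{il}u_{ij}\}$, the difference $u_{ij}u_{rn}^* - A\,u_{rn}^* u_{ij}$ has trivially vanishing $u_{kl}u_{ij}^{2}$-coefficient (both sides contribute $A^{2}$), while the $u_{kj}u_{il}u_{ij}$-coefficient reduces, after grouping, to $\frac{\bar E_{rik}}{\bar E_{njl}}\,\bar p\,\bigl(qB+q\bar p - A\bigr)$.

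The proof is then closed by the algebraic identity $A = q\bar p + qB$, which I would verify directly from \eqref{eq_comm_rel_1}: under $\co{k}{i}=1$ one gets $A = \frac{q(1+|p|^{2})}{p(1+|q|^{2})}$ and $B = \frac{1-|pq|^{2}}{p(1+|q|^{2})}$, whence $q\bar p+qB = \frac{q\bigl(|p|^{2}(1+|q|^{2}) + 1 - |pq|^{2}\bigr)}{p(1+|q|^{2})} = \frac{q(1+|p|^{2})}{p(1+|q|^{2})} = A$. This forces the remaining coefficient to vanish, proving \eqref{eq_ij_rn_star}. The main obstacle will be the careful index bookkeeping rather than any conceptual difficulty; I note in passing that an equally valid start from \eqref{eq_R_star} leaves a residual expression of the form $\alpha Bq[\bar p\,u_{kj}u_{il} - q\,u_{il}u_{kj}]u_{ij}$ whose vanishing is less transparent (it implicitly relies on the consistency of \eqref{eq_R_star} and \eqref{eq_R_star_adjoint}), so the adjoint-based expansion is strongly preferable.
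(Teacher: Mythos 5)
Your proof is correct and follows essentially the same strategy as the paper: substitute the quadratic expression for $u_{rn}^*$, push $u_{ij}$ through using the row, column and diagonal commutation relations of Theorem \ref{thm_comm_rel}(b), and kill the residual $u_{kj}u_{il}u_{ij}$-term by the algebraic identity $A = q\bar p + qB$ (the paper's coefficient $M=0$ is exactly this identity). The only difference is cosmetic: you expand via \eqref{eq_R_star_adjoint} and reduce both sides to a common monomial basis, whereas the paper starts from \eqref{eq_R_star} and reassembles $u_{ij}u_{rn}^*$ midway — contrary to your closing remark, that route also closes by a direct computation rather than by any appeal to the consistency of the two formulas.
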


\begin{proof}
If $\co{k}{i}=1$ and $\co{j}{l}=1$, then according to \eqref{eq_comm_rel_1} and
Corollary \ref{cor_comm_rel}, the following commutation relation holds
 $$ u_{kl} u_{ij} =  (A_{i,k}^{j,l} )^{-1} u_{ij} u_{kl}  -(A_{i,k}^{j,l}
)^{-1}B_{i,k}^{j,l} u_{kj} u_{il}$$
with
\begin{equation*}
 A_{i,k}^{j,l} = \frac{E_{rik}}{E_{rki}}\frac{E_{nlj}}{E_{njl}}
\frac{ 1+ \big|\frac {E_{rki}}{E_{rik}}
\big|^2}{1+\big|\frac{E_{nlj}}{E_{njl}}\big|^2} \neq 0, 
  \quad 
 B_{i,k}^{j,l} = \frac{E_{rik}}{E_{rki}}
\frac{1 - \big|\frac{E_{nlj}}{E_{njl}}\big|^2\big|\frac{E_{rki}}{E_{rik}}\big|^2
}{1+\big|\frac{E_{nlj}}{E_{njl}} \big|^2}.
\end{equation*}
Moreover, the condition on the characteristic constants 
imply, cf. Equation \eqref{eq_comm_rel_row1}, that 
$$ u_{al} u_{aj} = -\frac{E_{njl}}{E_{nlj}} u_{aj}u_{al} , \quad 
u_{ka}u_{ia} = -\frac{\bar{E}_{rik}}{\bar{E}_{rki}} u_{ia}u_{ka}.$$

Thus, by \eqref{eq_R_star} we have
\begin{eqnarray*}
u_{rn}^*  u_{ij}&\stackrel{\eqref{eq_R_star}}{=}&  \frac{E_{njl}}{E_{rki}}\
u_{kj} \underbrace{u_{il} u_{ij}}
+ \frac{E_{nlj}}{E_{rki}}\ \underbrace{u_{kl}u_{ij}} u_{ij}
\\ &\stackrel{\eqref{eq_Q_row}}{=}& 
-\frac{E_{njl}}{E_{rki}}\frac{E_{njl}}{E_{nlj}}\ \underbrace{u_{kj}u_{ij}}
u_{il} 
+ \frac{E_{nlj}}{E_{rki}}\ \left( (A_{i,k}^{j,l} )^{-1} u_{ij} u_{kl} 
-(A_{i,k}^{j,l} )^{-1}B_{i,k}^{j,l} u_{kj} u_{il} \right) u_{ij}
\\ &\stackrel{\eqref{eq_Q_col}}{=}& 
\frac{E_{njl}}{E_{rki}}\frac{E_{njl}}{E_{nlj}}\frac{\bar{E}_{rik}}{\bar{E}_{rki}
}\
u_{ij}u_{kj} u_{il} 
+ \frac{E_{nlj}}{E_{rki}} (A_{i,k}^{j,l} )^{-1}  u_{ij} \left(
\frac{E_{rki}}{E_{nlj}}\ u_{rn}^* -
 \frac{E_{njl}}{E_{nlj}}\ u_{kj} u_{il}\right) \\
& & + \frac{E_{nlj}}{E_{rki}}\frac{E_{njl}}{E_{nlj}}(A_{i,k}^{j,l}
)^{-1}B_{i,k}^{j,l} u_{kj}  u_{ij} u_{il} 
\\ &\stackrel{}{=}& 
(A_{i,k}^{j,l} )^{-1} u_{ij} u_{rn}^* + 
\frac{E_{njl}}{E_{rki}} \underbrace{\left( \frac{E_{njl}}{E_{nlj}}
\frac{\bar{E}_{rik}}{\bar{E}_{rki}} - (A_{i,k}^{j,l} )^{-1} 
- \frac{\bar{E}_{rik}}{\bar{E}_{rki}} (A_{i,k}^{j,l} )^{-1}B_{i,k}^{j,l} \right)
}_{M}
u_{ij} u_{kj} u_{il}. 
\end{eqnarray*}
However, 
\begin{eqnarray*}
M 
&=& \frac{E_{njl}}{E_{nlj}}\frac{\bar{E}_{rik}}{\bar{E}_{rki}} -
(A_{i,k}^{j,l} )^{-1} 
- \frac{\bar{E}_{rik}}{\bar{E}_{rki}} (A_{i,k}^{j,l} )^{-1}B_{i,k}^{j,l} 
\\ &=& 
\frac{( A_{i,k}^{j,l} )^{-1}}{1+\big|\frac{E_{nlj}}{E_{njl}} \big|^2} 
\left|\frac{E_{rik}}{E_{rki}}\right|^2
\left( 
\big|\frac {E_{rki}}{E_{rik}} \big|^2
- \left| \frac{E_{rki}}{E_{rik}} \right|^2 
(1+\big|\frac{E_{nlj}}{E_{njl}}\big|^2)
+ \big|\frac{E_{nlj}}{E_{njl}}\big|^2\big|\frac{E_{rki}}{E_{rik}}\big|^2
 \right) =0,
\end{eqnarray*} 
so the relation \eqref{eq_ij_rn_star} follows.
\end{proof}

\section{Definitions of quantum groups $U_q(2)$, $A_{p,k,m}(3)$ and
$SU_{p,m}$}
\label{sec_definitions}
In this section we give definitions and descriptions of three PW-quantum groups
for the construction in dimension $N=3$. To the best of our knowledge, apart
from $U_q(2)$, these quantum groups are not yet known. 
\subsection{Quantum group $U_q(2)$}
The quantum group $U_q(2)$ has already appeared in the literature. The
Hopf-algebraic version of the definition was given for example in
\cite{muller_hoissen93} (for $q\in \C^*$) as well as in the
unpublished preprint \cite{bichon99} (for $q\in \R^*$). The C$^*$-algebraic
version of the quantum group $U_q(N)$ for $q \in (0,1)$ and $N\geq 2$ was
provided by Koelink in \cite{koelink91}.

Moreover, for $q \in (0,1)$, Wysocza\'{n}ski \cite{wysoczanski04} constructed
$U_q(2)$ using the Woronowicz construction and described its irreducible
representations and corepresentations, and the related multiplicative unitary.
Independently, $U_q(2)$ for $q\in \C^*$ was studied by Zhang and Zhao in 
\cite{zhang_zhao_2005} and in \cite{zhang_2006}, who provided classification of
the irreducible representations in this wider context and gave an explicit
formula for the Haar state. 

\begin{defi} \label{def_uq2}
Let $q=|q|e^{\i t}\in \C^*$.  The quantum group $U_q(2)$ is the pair
$(\mathsf{A}_q,u)$, where $\mathsf{A}_q$ is the C${}^*$-algebra generated by
$a,c,v$ satisfying
the relations:
\begin{equation*}
ac=\bar{q} ca, \quad ac^*=q c^*a, \quad a^*a+c^*c=1, \quad aa^*
+|q|^2 c^*c=1, \quad cc^*=c^*c
\end{equation*}
and 
\begin{equation*}
vv^*=v^*v=1, \quad av=va, \quad cv=e^{-2\I t}vc,
\end{equation*}
and $u$ is the matrix
$$
u = \
\left ( \begin{array}{ccc}
v & 0 & 0 \\
0 & a & q c^*v^* \\
0 & c & a^*v^*
\end{array} \right ).$$
\end{defi}

The quantum group structure is imposed by the fact that $u$ is the fundamental
corepresentation. Hence 
\begin{eqnarray*}
 & & \Delta(a) = a \otimes a + q c^*v^*\otimes c, \quad \Delta(v)=v\otimes v,
\quad \Delta(c) = c\otimes a + a^*v^*\otimes c, \\
& & \e(a)=\e(v)=1, \quad \e(c)=0, \\
& & S(a) = a^*, \ S(c) = \bar{q}vc, \ S(v)=v^*, \ S(a^*)=a,\ 
S(v^*)=v, \ S(c^*)=\frac1{q} v^*c^*.
\end{eqnarray*}

It follows from \cite{wysoczanski04} that, when $q \in (0,1)$ or more general
when $q\in \R^*$, the quantum group
$U_{q}(2)$ admits a decomposition into the twisted product of $SU_{q}(2)$ and
$U(1)$, due to the fact that the element $v$ is central in the algebra
$\mathsf{A}_q$. When $q\not\in \R$, $v$ is no longer central and thus no evident
decomposition into subgroups holds.

The representation theory of the algebras $\A_q=C(U_q(2))$ differs according to
whether $|q|=1$, $|q|< 1$ or $|q|>1$. However, $U_q (2)$ is isomorphic to
$U_{q'}(2)$, where $q' = \frac1{\bar{q}}$, so the last two cases are dual. When
$q \in (0,1)$ there are two families of irreducible representations: 
the 1-dimensional family sending $c$ to 0, $a$ and $v$ to numbers of modulus 1,
and the $\infty$-dimensional family under which $a$ becomes a weighted
shift, while $c$ and $v$ are diagonal, cf. \cite[Theorem 3.1]{wysoczanski04}.
For $|q|=1$, the irreducible representations (apart from the 1-dimensional
ones) are related to irreducible representations of non-commutative tori
(rotation algebra), see \cite[Theorem 3.4]{zhang_zhao_2005}. 

We shall see in Theorem \ref{thm_uq2_or_t2} that the quantum group $U_q(2)$ is a
PW-quantum group related, for example, to the array 
\begin{eqnarray*} 
&E_{123}=1, \; E_{132}=-q, \; E_{213}=\alpha, & \\ 
&E_{231}=\beta, \; E_{312}=-\alpha\bar{q}, \; E_{321}=-\beta q &
\end{eqnarray*}
for any parameters $\alpha, \beta \in \C^*$. 

%
%

\subsection{Quantum groups $SU_{p,m}(3)$}
\label{ssec_ex_s_k}

In this section we introduce generalized $SU_q(3)$ groups and give their
basic properties. 

\begin{defi}[Quantum groups $SU_{p,m}(3)$] \label{def_su_k}
 Let $p\in \C^*$, $m\in \{0,1,2\}$, and set $\zeta=e^{\frac23 \i
\pi}$. We define $SU_{p,m}(3)$ to be the PW-quantum group related to the
array $E$ given by 
\begin{eqnarray*}
 E_{123}=1, &E_{132}=p, & E_{213}=p\zeta^{-m},\\
E_{231}=|p|^2 \zeta^m, & E_{312}=|p|^2 \zeta^{-m}, & E_{321}=|p|^2p\zeta^m. 
\end{eqnarray*}
\end{defi}

It immediately follows from the definition that when $p\in \R^*$ and $m=0$, 
the group $SU_{p,0}(3)$ is the twisted $SU_{-p}(3)$ group defined by Woronowicz
\cite{woronowicz88}. In particular, the commutation relations
\eqref{eq_comm_rel_row1} and \eqref{eq_comm_rel_1} agree with the relations
$(3)$ in \cite{bragiel89}. This is no longer the case for complex $p$ or $m\neq
0$. 

Computing the modular constants, we observe that the quantum group $SU_{p,m}(3)$
is of Kac type if and only if $|p|=1$. 

The most fundamental question concerning the quantum group $SU_{p,m}(3)$ is
whether it is non-trivial (i.e.\ if the related C$^*$-algebras are
noncommutative). For $|p|\neq 1$ the non-triviality is immediate, because
quantum versions of classical groups are always of Kac type. But the
argument presented below proves the non-triviality in the general case
$p\not\in \{-1,0,1\}$; namely, we show that, for any $m$, $SU_{p,m}(3)$
contains the (non-trivial) quantum group $U_{-p}(2)$ as a subgroup. This
justifies the interest in studying $SU_{p,m}(3)$. 

\medskip
Let us recall (cf. \cite{podles95}) that a compact quantum group $\mathbb{H}$
is a quantum subgroup of $\G$ if there exists $\Gamma:
\A(\G) \to \A(\mathbb{H})$, a surjective $*$-homomorphism, such that
$(\Gamma \otimes \Gamma) \circ \Delta_G = \Delta_\mathbb{H} \circ \Gamma$. 

\begin{prop} \label{prop_uq_in_suq}
 For any $p\in \C^*$ and $m\in \{0,1,2\}$, the quantum group
$U_{-p}(2)$ is a quantum subgroup of $SU_{p,m}(3)$.
\end{prop}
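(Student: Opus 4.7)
My plan is to construct a surjective unital $*$-homomorphism $\Gamma \colon \A(SU_{p,m}(3)) \to \A(U_{-p}(2))$ that intertwines the comultiplications; this is exactly the definition of a quantum subgroup recalled in the paragraph preceding the proposition. The guiding picture is the classical embedding $U(2) \hookrightarrow SU(3)$, $M \mapsto \mathrm{diag}\bigl((\det M)^{-1},M\bigr)$: at the level of function algebras the dual map sends a coordinate of $SU(3)$ either to zero, to the ``inverse determinant'' generator $v$, or to a coefficient of the $U_{-p}(2)$ fundamental corepresentation.

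Concretely, I would define $\Gamma$ on generators by sending $u$ to the $3\times 3$ block-diagonal matrix whose $(1,1)$ entry is $v$ and whose lower-right $2\times 2$ block is the non-trivial block of the fundamental corepresentation of $U_{-p}(2)$; in particular $\Gamma(u_{1j}) = \Gamma(u_{j1}) = 0$ for $j \in \{2,3\}$, while $\Gamma(u_{22}) = a$, $\Gamma(u_{32}) = c$, $\Gamma(u_{33}) = a^*v^*$ and $\Gamma(u_{23})$ is the remaining matrix entry (of the form $\alpha\,c^*v^*$ with $\alpha$ determined by the sign conventions fixed in Definition~\ref{def_uq2}). By the universal property of $\A(SU_{p,m}(3))$ it suffices to verify that these nine images satisfy both (U) and (TD) associated with the array $E$ of Definition~\ref{def_su_k}.

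Unitarity is automatic: block-diagonality makes every mixed off-diagonal entry of $\Gamma(u)\Gamma(u)^*$ and $\Gamma(u)^*\Gamma(u)$ between row/column $1$ and rows/columns $\{2,3\}$ vanish, and the surviving equations are precisely the unitarity of the $U_{-p}(2)$ matrix, built into its defining relations. For (TD), fix $(a_1,a_2,a_3) \in \{1,2,3\}^3$ and consider
$$
\sum_{(i,j,k) \in S_3} E_{ijk}\,\Gamma(u_{a_1 i})\Gamma(u_{a_2 j})\Gamma(u_{a_3 k}).
$$
If $(a_1,a_2,a_3) \notin S_3$, block-diagonality forces at least one factor in every summand to be zero, matching $E_{a_1 a_2 a_3} = 0$. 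If $(a_1,a_2,a_3) \in S_3$, exactly one index equals $1$, contributing a lone factor $v$ that can be commuted past $a, a^*, c, c^*$ using $va = av$, $va^* = a^*v$ and $v c^* v^* = e^{-2\I t} c^*$ (where $-p = |p|e^{\I t}$), and the surviving two-term sum collapses onto either $a^*a + c^*c = 1$ or $aa^* + |p|^2 c^*c = 1$, yielding the required value $E_{a_1 a_2 a_3}$.

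The principal bookkeeping obstacle is to match the phases $\zeta^{\pm m}$ appearing in $E_{213}, E_{231}, E_{312}, E_{321}$ against the phase $e^{-2\I t}$ produced when $v$ crosses $c$; the numerical identity $p^{2}e^{-2\I t} = |p|^{2}$ is the coincidence that makes all six permutations reduce correctly and uniformly in $m$. Once $\Gamma$ is shown to be a well-defined $*$-homomorphism, surjectivity is immediate because $v, a, c$ and their adjoints lie in the image and generate $\A(U_{-p}(2))$, and compatibility with the comultiplications $(\Gamma \otimes \Gamma) \circ \Delta_{SU_{p,m}(3)} = \Delta_{U_{-p}(2)} \circ \Gamma$ follows on generators directly from $\Delta(u_{ij}) = \sum_k u_{ik} \otimes u_{kj}$, since $\Gamma(u)$ is the fundamental corepresentation of $U_{-p}(2)$.
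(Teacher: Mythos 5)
Your construction is exactly the paper's: the published proof defines the same block--diagonal $\Gamma$ (with $(2,3)$--entry $p\,c^*v^*$) and then simply asserts that ``a direct calculation'' shows the relations are preserved, so by actually attempting the verification you are supplying more detail than the paper does. The surjectivity and coproduct arguments also coincide.

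One step of your verification is, however, stated incorrectly. For $(a_1,a_2,a_3)\notin S_3$ you claim that block--diagonality kills at least one factor in every summand of the twisted determinant sum. This is true when two of the $a_\ell$ equal $1$, or when none does, but it fails for the six triples with exactly one index equal to $1$ and the other two equal to each other, e.g.\ $(a_1,a_2,a_3)=(1,2,2)$: there both summands $E_{123}\,v\,a\,(p c^*v^*)$ and $E_{132}\,v\,(p c^*v^*)\,a$ are nonzero, and the sum vanishes only after invoking the commutation relation $ac^*=-p\,c^*a$ (these are precisely the relations (a7)--(a12) appearing in the proof of Theorem \ref{thm_uq2_or_t2}). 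Relatedly, the constant $\alpha$ you leave ``determined by the sign conventions'' is in fact forced to be $\alpha=p$ (as in the paper's $\Gamma$), not the value $-p$ that a literal reading of the matrix in Definition \ref{def_uq2} with $q=-p$ would give: with $\alpha=-p$ the permutation rows of (TD) produce $aa^*-|p|^2c^*c$ rather than $aa^*+|p|^2c^*c=1$. With these two repairs your argument goes through and is the paper's proof.
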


\begin{proof}
 Let us define the mapping from $\A\big(SU_{p,m}(3)\big)$ to $\A(U_{-p}(2))$ by
sending the coefficients of the fundamental corepresentation $u$ of
$SU_{p,m}(3)$ into the respective terms in the fundamental corepresentation of
$U_{-p}(2)$: 
 $$ \Gamma : \left ( \begin{array}{ccc}
u_{11} & u_{12} & u_{13}\\
u_{21} & u_{22} & u_{23} \\
u_{31} & u_{32} & u_{33} 
\end{array} \right ) \mapsto 
\left ( \begin{array}{ccc}
v & 0 & 0\\
0 & a & pc^*v^* \\
0 & c & a^*v^* 
\end{array} \right ).$$
It is a direct calculation to check that $\Gamma$ preserves the relations in
$SU_{p,m}(3)$ and thus extends to a $*$-homomorphism. It is obviously
surjective and, due to preservation of fundamental corepresentation, preserves
the coproduct (also the counit and the antipode). Thus $U_{-p}(2)$ is a quantum
subgroup of
$SU_{p,m}(3)$.
\end{proof}

\subsection{Quantum groups $A_{p,k,m}(3)$}
\label{ssec_ex_a}
In this section we define one more family of PW-quantum groups. 
\begin{defi}[Quantum groups $A_{p,k,m}(3)$] \label{def_akm}
Let $p\in \C^*$, $\zeta=e^{\frac{2}{3}\i\pi}$, $k,m\in \{0,1,2\}$.
We define $A_{p,k,m}(3)$ to be the PW-quantum group related to
the array $E$ given by 
\begin{equation*} 
E_{123}=1, \ E_{132}=p, \ E_{213}= p \zeta^k, \ 
E_{231}=\zeta^m,\
E_{312}=\zeta^{-m}, \ E_{321}=p\zeta^{-k} .
\end{equation*}
\end{defi}

Let us first observe that the quantum group $A_{p,k,m}(3)$ is always of Kac
type, because of the modular constants $M_j=1+|p|^2$ for $j=1,2,3$, and that
$u$ cannot be immediately decomposed, since the diagonal constants are all
equal 
$p_1=p_2=p_3=\zeta^m+|p|^2\zeta^{-k}$. Moreover, 
\begin{equation} \label{eq_akm_char_const}
 \co{1}{2}=\co{2}{3}=\co{3}{1}=\zeta^{-(k+m)}.
\end{equation}
Therefore, when $k+m\in 3N$, all characteristic constants are 1, whereas when
$k+m\not\in 3\N$, all characteristic constants differ from 1. The nature of
the quantum group $A_{p,k,m}(3)$ will differ accordingly.
We will show that in both cases ($k+m$ equals 0 modulo 3, or not), one can
choose parameters to ensure that $A_{p,k,m}(3)$ is non-trivial. 

\medskip
Let us focus on the case when $k+m\not\in 3\N$. The fact that $\co{n}{l}\neq 1$
for any $n\neq l$ strongly influences the commutation relations between
generators. In fact the following general result (to which we shall refer
later on as well) holds true for PW-quantum groups.  

\begin{lemma} \label{lem_akm_relations}
 Let $\G=(\A, u)$ be a PW-quantum group for which the characteristic constants
are all different from 1. Then for any $r\neq k,\, n\neq l$ we have
\begin{eqnarray}
\label{eq_s3_col} & & u_{rn} u_{kn} =0, \quad  u_{rn}u_{kn}^*=0, \quad
u_{rn}^*u_{kn}=0, \\ 
\label{eq_s3_row} & & u_{rn} u_{rl}=0, \quad u_{rn}^*u_{rl}=0, \quad
u_{rn}u_{rl}^*=0, 
\end{eqnarray}
and there exist complex
constants $A_{r,k}^{n,l}$, $B_{r,k}^{n,l}$ and $C_{r,k}^{n,l}\neq 0$ such that 
\begin{eqnarray}
\label{eq_s3_mix} & &u_{rn} u_{kl} = A_{r,k}^{n,l} u_{kl} u_{rn} + B_{r,k}^{n,l}
u_{kn} u_{rl}, \quad u_{rn}^*u_{kl} = C_{r,k}^{n,l} u_{kl}u_{rn}^* .
\end{eqnarray}
\end{lemma}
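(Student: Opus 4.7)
The plan is to proceed in three stages, leaning on Theorem \ref{thm_comm_rel} together with the adjoint formulas \eqref{eq_R_star} and \eqref{eq_R_star_adjoint}. Under the standing hypothesis that every characteristic constant $\co{l}{n}\neq 1$, part (a) of Theorem \ref{thm_comm_rel} immediately supplies the first identity in each of \eqref{eq_s3_col} and \eqref{eq_s3_row} (from \eqref{eq_comm_rel_row0}) and the first identity in \eqref{eq_s3_mix} (from \eqref{eq_comm_rel_not1}, with $A_{r,k}^{n,l}$ and $B_{r,k}^{n,l}$ read off directly). Only the four remaining starred row/column zero relations and the commutation $u_{rn}^*u_{kl}=Cu_{kl}u_{rn}^*$ still need to be established.

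Next I would handle the four starred row/column zero identities by substituting the quadratic expression for the starred factor from \eqref{eq_R_star} or \eqref{eq_R_star_adjoint} and observing that each of the two resulting cubic terms begins (or ends) with a pair of generators lying in a common row or common column, which is killed by the already-established zero relations. Specifically, for $u_{rn}u_{kn}^*$ I would expand $u_{kn}^*$ via \eqref{eq_R_star} based on the permutation $(k,r,i)\in S_3$; both summands then begin with a row-$r$ product $u_{rn}u_{rj}$ or $u_{rn}u_{rl}$ that vanishes by row-zero. For $u_{rn}^*u_{kn}$, using \eqref{eq_R_star} based on $(r,i,k)\in S_3$ produces trailing row-$k$ products $u_{kl}u_{kn}$ or $u_{kj}u_{kn}$, which again vanish. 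For $u_{rn}u_{rl}^*$ and $u_{rn}^*u_{rl}$ with $n\neq l$, the analogous expansions using \eqref{eq_R_star_adjoint} produce column-$n$ pairs $u_{rn}u_{in},\,u_{rn}u_{kn}$ and column-$l$ pairs $u_{kl}u_{rl},\,u_{il}u_{rl}$ respectively, all killed by column-zero.

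For the starred commutation, I would expand $u_{rn}^*$ via \eqref{eq_R_star} with the free column index chosen to be $l$, so that $u_{rn}^* = \alpha u_{im}u_{kl} + \beta u_{il}u_{km}$, where $m$ is the complement of $\{n,l\}$ and $\alpha,\beta$ are the corresponding ratios of $E$'s. Right-multiplying by $u_{kl}$ and using row-zero $u_{km}u_{kl}=0$ collapses the second summand, yielding $u_{rn}^*u_{kl}=\alpha u_{im}u_{kl}^2$. Left-multiplying instead by $u_{kl}$ and using column-zero $u_{kl}u_{il}=0$ collapses the other summand, yielding $u_{kl}u_{rn}^*=\alpha u_{kl}u_{im}u_{kl}$. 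Applying the mixed commutation \eqref{eq_comm_rel_not1} to the factor $u_{kl}u_{im}$ and once more using row-zero to kill the term proportional to $u_{il}u_{km}u_{kl}$, one obtains $u_{kl}u_{rn}^* = \tilde A\, u_{rn}^*u_{kl}$, where $\tilde A$ is the coefficient of $u_{im}u_{kl}$ on the right-hand side of \eqref{eq_comm_rel_not1} applied to $u_{kl}u_{im}$. Solving for $C=\tilde A^{-1}$ gives the desired identity; nonvanishing of $C$ is automatic because $\tilde A$ contains the factor $1-\co{i}{k}$ in its numerator, which is nonzero by hypothesis.

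The only real subtlety is choosing, in each expansion, the permutation triple so that the row or column index of the target generator appears as one of the free indices in the quadratic expression for the starred factor; once this is done, the vanishing identities apply directly and the computations become mechanical. No deeper obstacle is anticipated, since the hypothesis $\co{l}{n}\neq 1$ simultaneously supplies the row/column zero relations and ensures invertibility of the coefficient $\tilde A$.
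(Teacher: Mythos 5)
Your proposal is correct and follows essentially the same strategy as the paper: Theorem \ref{thm_comm_rel} supplies the unstarred zero relations and the $A,B$-commutation, and the quadratic expansions \eqref{eq_R_star}/\eqref{eq_R_star_adjoint} combined with those row/column zeros handle everything involving adjoints. The only cosmetic differences are that the paper derives two of the starred zero relations by applying the antipode rather than by a second direct expansion, and for the $C$-relation it re-substitutes the expansion of $u_{rn}^*$ to read off $C_{r,k}^{n,l}=\frac{E_{rik}}{E_{rki}}\frac{E_{nlj}}{E_{njl}}$ explicitly instead of passing through \eqref{eq_comm_rel_not1} and inverting the (nonzero) coefficient $\tilde A$.
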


\begin{proof}
According to Theorem \ref{thm_comm_rel}, for $r\neq k,\, n\neq l$ the following
relations hold
\begin{equation*} 
 u_{rn} u_{kn} =0, \quad u_{rl} u_{rn}=0 \quad \mbox{and} \quad 
u_{rn} u_{kl} = A_{r,k}^{n,l} u_{kl} u_{rn} + B_{r,k}^{n,l} u_{kn} u_{rl},
\end{equation*}
where 
$$A_{r,k}^{n,l} = \frac{E_{irk}}{E_{ikr}} \frac{E_{jln}}{E_{jnl}} \frac{1 -
 \co{k}{r}}{1-\co{l}{n}} \quad \mbox{and} \quad 
 B_{r,k}^{n,l} = \frac{E_{irk}}{E_{ikr}} \frac{1 -
\co{k}{r}\co{l}{n}}{1-\co{l}{n}} .$$
Due to Theorem \ref{thm_pi}, we deduce that each $u_{rn}$ is a normal
partial isometry. Moreover, using \eqref{eq_R_star} and
\eqref{eq_comm_rel_row0}, we easily show (as in the proof of Theorem
\ref{thm_pi})
that 
\begin{equation*} 
 u_{kn}u_{rn}^*=0, \quad u_{rn}^*u_{kn}=0, \quad \mbox{for} \quad r\neq k.
\end{equation*}
Applying the antipode, cf.\ \eqref{eq_antipode}, we find out that
\begin{equation*} 
 u_{rn}u_{rl}^*=0, \quad u_{rn}^*u_{rl}=0, \quad \mbox{for} \quad n\neq l.
\end{equation*}
(Let us note that an alternative proof of this fact can be found in
\cite[Proposition 3.2]{banica_skalski}.)
Finally, we observe that for $r\neq k$ and $n\neq l$ we have
\begin{equation*} 
  u_{rn}^* u_{kl} = C_{r,k}^{n,l} \ u_{kl} u_{rn}^*, \quad \mbox{where} \quad
C_{r,k}^{n,l} = \frac{E_{rik}}{E_{rki}} \frac{E_{nlj}}{E_{njl}} \neq 0,
\end{equation*}
because of
\begin{eqnarray*}
 u_{rn}^* u_{kl} &=& \frac{E_{nlj}}{E_{rki}}\
 u_{kl}u_{ij}u_{kl} + \frac{E_{njl}}{E_{rki}}\ u_{kj}
\underbrace{u_{il}u_{kl}}_{=0} 
=\ \frac{E_{nlj}}{E_{rki}}u_{kl}\left(
\frac{E_{rik}}{E_{njl}}\ u_{rn}^*-
\frac{E_{nlj}}{E_{njl}}\ u_{il}u_{kj}  \right) .
\end{eqnarray*}
\end{proof}

Let us now consider a representation $\pi$ of $\A=C(A_{p,k,m}(3))$
on some Hilbert space $H$ and denote $U_{ij}=\pi(u_{ij})$ for $i,j=1,2,3$. The
next lemma shows that the range of each $U_{ij}$ is a $\pi$-invariant subspace.
Again, it will be convenient to formulate the result in a more general way. 

\begin{lemma} \label{lem_akm_invariance}
Let $\G=(\A, u)$ be a compact quantum group of Kac type.
Assume that there exist complex constants $A_{r,k}^{n,l}$, $B_{r,k}^{n,l}$ and
$C_{r,k}^{n,l}\neq 0$ such that for $r\neq k,\, n\neq l$ the relations
\eqref{eq_s3_col} - \eqref{eq_s3_mix} hold true. 
Then for any representation $\pi$ of $\A=C(\G)$ on some Hilbert space $H$ the
subspace $\pi(u_{ij})(H)$ is invariant.
\end{lemma}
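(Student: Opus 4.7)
Write $U_{kl}:=\pi(u_{kl})$ and $R_{ij}:=U_{ij}(H)$. The vanishing relations in \eqref{eq_s3_col} and \eqref{eq_s3_row} are precisely the hypothesis \eqref{eq_part_isom_basic} of Theorem \ref{thm_pi} applied to every $(R,J)$, so each $U_{ij}$ is a normal partial isometry and $P_{ij}:=U_{ij}U_{ij}^{*}=U_{ij}^{*}U_{ij}$ is the orthogonal projection onto $R_{ij}$. Since $\pi(\A)$ is generated by the $U_{kl}$ together with their adjoints, $\pi$-invariance of $R_{ij}$ reduces to verifying that $U_{kl}P_{ij}(H)\subseteq R_{ij}$ and $U_{kl}^{*}P_{ij}(H)\subseteq R_{ij}$ for every pair $(k,l)$.

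My plan is to establish, for each $(k,l)$, one-sided commutations of the form
\begin{equation*}
U_{kl}P_{ij}=\lambda\,P_{ij}U_{kl},\qquad U_{kl}^{*}P_{ij}=\mu\,P_{ij}U_{kl}^{*},
\end{equation*}
with scalars $\lambda,\mu\in\mathbb{C}$ depending on $(k,l)$; this immediately forces the ranges of the left-hand sides into $P_{ij}(H)=R_{ij}$. When $(k,l)=(i,j)$, the partial-isometry identity together with normality gives $U_{ij}P_{ij}=U_{ij}=P_{ij}U_{ij}$ and similarly for the adjoints. When $k=i$ and $l\neq j$, the row relations $u_{il}u_{ij}=0$ and $u_{il}^{*}u_{ij}=0$ in \eqref{eq_s3_row} force both sides of each identity to vanish. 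The column case $k\neq i$, $l=j$ is symmetric via \eqref{eq_s3_col}.

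For the essential case $k\neq i$ and $l\neq j$, I expand
\begin{equation*}
U_{kl}P_{ij}=U_{kl}U_{ij}U_{ij}^{*}=A\,U_{ij}U_{kl}U_{ij}^{*}+B\,U_{il}U_{kj}U_{ij}^{*}
\end{equation*}
using the first identity of \eqref{eq_s3_mix}; the $B$-term vanishes because $U_{kj}U_{ij}^{*}=0$ is one of the relations in \eqref{eq_s3_col}. The adjoint commutation $u_{kl}u_{ij}^{*}=C^{-1}u_{ij}^{*}u_{kl}$---the rewritten form of the second identity of \eqref{eq_s3_mix}---then converts $A\,U_{ij}U_{kl}U_{ij}^{*}$ into $(A/C)\,U_{ij}U_{ij}^{*}U_{kl}=(A/C)P_{ij}U_{kl}$, yielding the first factorisation. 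The second is handled symmetrically: pull $U_{kl}^{*}$ past $U_{ij}$ via $u_{kl}^{*}u_{ij}=\bar{C}\,u_{ij}u_{kl}^{*}$, expand the resulting $U_{kl}^{*}U_{ij}^{*}$ by taking the adjoint of the mixed relation at $(r,n)=(i,j)$, and observe that the unwanted cross term carries the factor $U_{ij}U_{il}^{*}$, which vanishes by \eqref{eq_s3_row}.

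The main obstacle is purely combinatorial: in each sub-case one must identify the correct auxiliary product whose vanishing kills the $B$-term, and use the adjoint version of the mixed relation in \eqref{eq_s3_mix} rather than its original form. A tempting shortcut would be to try to prove that $P_{ij}$ commutes with each $U_{kl}$ outright, but this would demand $A=C$, and a direct check in $A_{p,k,m}(3)$ shows these two scalars can differ by a non-trivial cube root of unity. One must therefore settle for the weaker one-sided factorisation with generally distinct scalars $\lambda$ and $\mu$; this is still enough, because only the range of $U_{kl}P_{ij}$ (and of $U_{kl}^{*}P_{ij}$) is relevant to the invariance claim.
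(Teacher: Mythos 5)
Your proof is correct and follows essentially the same route as the paper: invoke Theorem \ref{thm_pi} to get that each $U_{ij}$ is a normal partial isometry, then push $U_{kl}^{\epsilon}$ through the projection $U_{ij}U_{ij}^{*}$ case by case, killing the cross term of the mixed relation with the appropriate vanishing product from \eqref{eq_s3_col}--\eqref{eq_s3_row} and accepting a one-sided scalar factor rather than genuine commutation. The only cosmetic slip is writing the constant in $u_{kl}^{*}u_{ij}=C\,u_{ij}u_{kl}^{*}$ as $\bar{C}$; it is simply the instance $C_{k,i}^{l,j}$ of the second identity in \eqref{eq_s3_mix}, which is all the argument needs.
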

\begin{proof}
Let us consider a representation $\pi$ of $\A=C(\G)$
on some Hilbert space $H$ and denote $U_{ij}=\pi(u_{ij})$ and
$K_{ij}=U_{ij}U_{ij}^*(H)$ for $i,j=1,2,3$. 

The relations in $\G$ together with Theorem \ref{thm_pi} implies that $u_ij$ is
a normal partial isometry, hence $U_{ij}U_{ij}^*$ is the projection onto the
range of $U_{ij}$, thus
$K_{ij}=U_{ij}(H)$. It remains to show that $K_{ij}$ is an invariant subspace.

It follows from \eqref{eq_s3_row} and the normality of $u_{ij}$ that
\begin{eqnarray*}
 u_{in}(u_{ij}u_{ij}^*) = (u_{ij}u_{ij}^*)u_{in}, &
u_{in}^*(u_{ij}u_{ij}^*) = (u_{ij}u_{ij}^*)u_{in}^* & \mbox{for}\;
n=1,2,3,
\end{eqnarray*}
and similarly, from \eqref{eq_s3_col} and the normality of $u_{ij}$, we deduce
that $u_{rj}^\epsilon(u_{ij}u_{ij}^*) =
(u_{ij}u_{ij}^*)u_{rj}^\epsilon$ for $\epsilon\in \{1,*\}$. On the other hand,
using \eqref{eq_s3_col}-\eqref{eq_s3_mix}, for $r\neq i$
and $n\neq j$ we have
\begin{eqnarray*}
 u_{rn}(u_{ij}u_{ij}^*) &=& 
 A_{i,r}^{j,n} u_{ij}u_{rn}u_{ij}^* + B_{i,r}^{j,n} u_{in}
\underbrace{u_{rj} u_{ij}^*}_{=0}
=A_{i,r}^{j,n}(C_{i,r}^{j,n})^{-1} (u_{ij}u_{ij}^*) u_{rn}, 
\\ 
u_{rn}^* (u_{ij}u_{ij}^*) & \stackrel{\eqref{eq_s3_mix}}{=} & 
C_{r,i}^{n,j} u_{ij}\left( A_{i,r}^{j,n} u_{rn} u_{ij} + B_{i,r}^{j,n} u_{rj}
u_{in} \right)^* 
\\ &=& C_{r,i}^{n,j} \bar{A}_{i,r}^{j,n} u_{ij} u_{ij}^* u_{rn}^* +
C_{r,i}^{n,j} \bar{B}_{i,r}^{j,n} u_{ij} u_{in}^* u_{rj}^* 
\stackrel{\eqref{eq_s3_row}}{=} C_{r,i}^{n,j} \bar{A}_{i,r}^{j,n} 
(u_{ij}u_{ij}^*) u_{rn}^*.
\end{eqnarray*}
So whenever $x\in K_{ij}$, we have $u_{rn}^\epsilon x\in K_{ij}$. Since the
coefficients of the fundamental corepresentation are algebraic generators, this
shows that $K_{ij}$ is $\pi$-invariant. 
\end{proof}

Let us recall that the rotation algebra $A_\theta$ (called also
noncommutative torus) is the universal unital C$^*$-algebra generated by two
unitaries $v_1$ and $v_2$ subject to relation $v_1v_2=e^{2\i \pi \theta}v_2v_1$
(cf. \cite{anderson_paschke89} or \cite{williams07}).

\begin{thm} \label{thm_akm_reps}
 There are three families of irreducible representations of
$\A=C(A_{p,k,m}(3))$ with $k+m\not\in 3\mathbb{N}$:
 \begin{enumerate}
  \item one-dimensional representations: $ \pi(u_{11})=z_1$, $
\pi(u_{22})=z_2$, $\pi(u_{33})=\bar{z_1}\bar{z_2}$ and $\pi(u_{ij})=0$ 
for $i\neq j,$
where $|z_1|=|z_2|=1$.
  \item representation defined by
  $$ \pi(u_{12})=\pi_\theta (v_1), \quad \pi(u_{23})=\pi_\theta (v_2),
\quad \pi(u_{31})=\zeta^{-m}\pi_\theta (v_2^*v_1^*)$$
and $\pi(u_{ij})=0$ otherwise,
where $\pi_\theta$ is an irreducible representation of the (rational) rotation 
algebra $A_\theta$ with the generators $v_1$ and $v_2$, and $\theta =
\frac{k-m}{3}$.
  \item representation defined by
  $$ \pi(u_{13})=\pi_{\theta'} (w_1), \quad\pi(u_{21})=\pi_{\theta'} (w_2),
\quad \pi(u_{32})=\zeta^{m}\pi_{\theta'} (w_2^*w_1^*)$$
and $\pi(u_{ij})=0$ otherwise, where $\pi_{\theta'}$ is an irreducible
representation of the (rational) 
rotation algebra $A_{\theta'}$ with the generators $w_1$ and $w_2$, and
${\theta'} =\frac{m-k}{3}$.
 \end{enumerate}
\end{thm}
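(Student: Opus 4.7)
The strategy is to analyze an arbitrary irreducible representation $\pi : \A \to B(H)$ of $\A = C(A_{p,k,m}(3))$ and show that its structure reduces to one of the three described families. First I would establish a structural dichotomy: each $U_{ij} := \pi(u_{ij})$ is either $0$ or unitary. Since the hypothesis $k+m \not\in 3\mathbb{N}$ ensures via \eqref{eq_akm_char_const} that all characteristic constants differ from $1$, Lemma \ref{lem_akm_relations} is in force; moreover all modular constants equal $1 + |p|^2$, so $A_{p,k,m}(3)$ is of Kac type and Lemma \ref{lem_akm_invariance} applies. Thus $U_{ij}(H)$ is a closed $\pi$-invariant subspace (closed because $u_{ij}$ is a partial isometry by Theorem \ref{thm_pi}), and irreducibility forces it to be either $\{0\}$ or $H$. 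In the first case $U_{ij} = 0$; in the second, the normal partial isometry $U_{ij}$ is surjective, hence unitary. Combining the zero relations \eqref{eq_s3_col} and \eqref{eq_s3_row} with the unitarity $\sum_s U_{is} U_{is}^* = I$, exactly one entry in each row and each column must be unitary, so there exists $\sigma \in S_3$ with $U_{i,\sigma(i)}$ unitary and all other $U_{ij} = 0$.

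Next I would eliminate the three transpositions. For each $\sigma \in \{(12), (13), (23)\}$ I compute the commutation coefficient $A_{r,k}^{n,l}$ from Theorem \ref{thm_comm_rel} relating the two off-diagonal unitary entries, and compare it with the ratio of $U_{rn} U_{kl}$ to $U_{kl} U_{rn}$ forced by two suitable instances of the twisted determinant condition \eqref{TD} (typically the one with $(a_1, a_2, a_3) = (1,2,3)$ and the one with $(a_1, a_2, a_3) = \sigma \cdot (1,2,3)$; the third unitary entry is passed through the others using its own commutator, computed separately). Equating the two expressions yields an equation of the shape $\zeta^{k+m} = -1$ (for $\sigma = (12)$ and $\sigma = (23)$) or $1 = -1$ (for $\sigma = (13)$), neither of which admits a solution since $\zeta^{k+m}$ is a third root of unity while $-1$ is not. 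This rules out all three transpositions.

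For the identity $\sigma = e$, direct computation gives $A_{i,j}^{i,j} = 1$, so the $U_{ii}$'s pairwise commute; the twisted determinant condition reduces to $U_{11} U_{22} U_{33} = 1$. An irreducible representation of the commutative algebra so generated is one-dimensional, yielding family (1) with $z_1 = U_{11}, z_2 = U_{22} \in \mathbb{T}$ and $U_{33} = \bar{z_1}\bar{z_2}$. For $\sigma = (123)$, the fraction $(1 - \co{k}{r})/(1 - \co{l}{n})$ in the formula for $A_{1,2}^{2,3}$ simplifies to $1$ because $\co{2}{1} = \co{3}{2}$, so $A_{1,2}^{2,3} = \zeta^{k-m} = e^{2i\pi\theta}$ with $\theta = (k-m)/3$; analogous calculations give the same factor for the pairs $(U_{23}, U_{31})$ and $(U_{31}, U_{12})$. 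Thus $U_{12}$ and $U_{23}$ satisfy the defining relation of the rotation algebra $A_\theta$, while the twisted determinant condition forces $U_{31} = \zeta^{-m} U_{23}^* U_{12}^*$. Conversely, any irreducible representation $\pi_\theta$ of $A_\theta$ induces a representation of $\A$ by the prescription of family (2); its irreducibility is inherited from $\pi_\theta$. The case $\sigma = (132)$ is entirely parallel and yields family (3) with $\theta' = (m-k)/3$.

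The main obstacle is twofold. First, steps involving Theorem \ref{thm_comm_rel} require careful bookkeeping with the index conventions: when passing from the commutator $u_{rn} u_{kl}$ to the formula for $A_{r,k}^{n,l}$, one must correctly identify the auxiliary indices $i, j$ and read off the relevant entries of $E$ from the defining array of $A_{p,k,m}(3)$, and a slip here can erase the eventual contradiction obtained in step 3 or the cancellation producing $e^{2i\pi\theta}$ in step 5. Second, the existence direction for families (2) and (3) demands verifying that each of the remaining five twisted determinant conditions and each of the $C$-type relations $u_{rn}^* u_{kl} = C u_{kl} u_{rn}^*$ (from Lemma \ref{lem_akm_relations}) hold after the substitution; the cyclic symmetry of the rotation algebra relation $v_1 v_2 = e^{2i\pi\theta} v_2 v_1$ collapses this to a handful of independent checks, but they must be carried out explicitly.
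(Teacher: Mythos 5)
Your proposal is correct and follows essentially the same route as the paper: the invariant-subspace argument via Lemma \ref{lem_akm_invariance} forces each $\pi(u_{ij})$ to be $0$ or unitary, unitarity then produces a permutation $\sigma$, and the case analysis on $\sigma$ yields the three families with the same rotation-algebra parameter $\theta=(k-m)/3$. The only (minor) divergence is in ruling out transpositions: the paper derives the compatibility condition \eqref{eq_akm_perm_cond} from two orderings of (TD) and observes that a transposition would force $\co{r}{k}=1$, contradicting \eqref{eq_akm_char_const}, whereas you compare the $Q$-intertwiner coefficient of Theorem \ref{thm_comm_rel} with the commutation forced by (TD) to reach a ``cube root of unity $=-1$'' contradiction — both are valid.
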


\begin{proof}
Let $\pi$ be an irreducible representation of $\A=C(A_{p,k,m}(3))$ on a
Hilbert space $H$ and denote $U_{ij}:=\pi(u_{ij})$. Since in $A_{p,k,m}(3)$ the
relations \eqref{eq_s3_col} - \eqref{eq_s3_mix} from Lemma
\ref{lem_akm_invariance} are satisfied, hence each subspace $K_{ij}=U_{ij}(H)$
is $\pi$-invariant, thus trivial (i.e.\ $\{0\}$ or $H$). 

If $K_{ij}=\{0\}$ for some $i$ and $j$, then $U_{ij}=0$. If $K_{ij}=H$, then
$U_{ij}$ is unitary. Moreover, the relations (U1) and (U2) imply that there is
exactly one non-zero unitary element in each row and each column of $u$. This
means
that for the representation $\pi$ there exists a permutation $\sigma$ such that
$\pi(u_{k,\sigma(k)})\neq 0$ and $\pi(u_{k,j})= 0$ if $j\neq \sigma(k)$. Let us
denote these non-zero elements by $V_k=\pi(u_{k,\sigma(k)})$ and let for
simplicity denote $k':=\sigma(k)$.

We shall show that only specific permutations can appear in the description
of $\pi$. For that, we use the condition (TD) to express the commutation
relations between $V_k$'s. Namely, we have
\begin{eqnarray*}
 E_{k',r',i'} V_k V_r V_i = E_{kri} 1
 &\quad\Rightarrow \quad &
 V_i^* = \frac{E_{k',r',i'}}{E_{kri}} V_k V_r
 \\
 E_{i',k',r'} V_i V_k V_r = E_{ikr}1
 &\quad\Rightarrow \quad & 
 V_i^* = \frac{E_{i',k',r'}}{E_{ikr}} V_k V_r .
\end{eqnarray*}
Since $V_k$ and $V_r$ are invertible, the equations above lead to 
\begin{equation} \label{eq_akm_perm_cond}
\frac{E_{k',r',i'}}{E_{kri}} = \frac{E_{i',k',r'}}{E_{ikr}}, 
\end{equation}
which must be satisfied for any triple $(k,r,i)$. Moreover, if
\eqref{eq_akm_perm_cond} is satisfied, then the relation $E_{r',k',i'} V_r V_k
V_i = E_{rki} 1$ implies that any two of $V_k$'s commute up to a non-zero
constant 
\begin{equation} \label{eq_akm_comm_v1v2}
 V_k V_r =
\frac{E_{r',k',i'}}{E_{k',r',i'}} \frac{E_{kri}}{E_{rki}} V_r V_k.
\end{equation}

Let us check when the condition \eqref{eq_akm_perm_cond} holds. Obviously, it
remains true for $\sigma =\id $ (an identity permutation). In this case, the
non-zero elements appear on the diagonal of $\pi(u)$ and $V_k$'s pairwise
commute (the constant equals 1). Therefore for any
$k=1,2,3$ there exists $z_k$ of modulus 1 such that $V_k=z_k I_H$. Moreover,
$V_3= V_1^*V_2^*$, and the Hilbert space must be one-dimensional (otherwise, two
orthogonal vectors span two invariant subspaces). We conclude that in this
case the representation $\pi$ is one-dimensional and given by
$$ \pi(u_{11})=z_1, \quad \pi(u_{22})=z_2, \quad
\pi(u_{33})=\bar{z_1}\bar{z_2}, \quad \pi(u_{ij})=0 \;\; \mbox{for} \; i\neq
j,$$
where $|z_1|=|z_2|=1$.

On the other hand, it is immediate to see that $\sigma$ cannot be a
transposition. Indeed, if $\sigma$ sent the indices as follows: $r\to k$, $k\to
r$, $i\to i$, then we would have
$$\co{r}{k}=\frac{E_{ikr}}{E_{irk}}\frac{E_{rki}}{E_{kri}} =
\frac{E_{ikr}}{E_{i',k',r'}} \frac{E_{k',r',i'}}{E_{kri}}
\stackrel{\eqref{eq_akm_perm_cond}}{=} 1,$$ 
which would contradict Equation \eqref{eq_akm_char_const}, since
$\zeta^{\pm(k+m)}\neq 1$.

Finally, we can check by direct calculation that the condition
\eqref{eq_akm_perm_cond} is satisfied for the permutations $\sigma=(231)$ and
$\sigma =(312)$. In the first case, the related unitaries satisfy  
$$V_3= \zeta^{-m} V_2^*V_1^*, \quad V_1V_2= 
e^{\frac{2}{3}
(k-m) \i\pi} V_2V_1 ,$$
hence the representation $\pi$ must be of the form 
$$ \pi(u_{12})=\pi_\theta (v_1), \quad \pi(u_{23})=\pi_\theta (v_2), \quad
\pi(u_{31})=e^{-\frac{2\pi}{3} m \i}\pi_\theta (v_2^*v_1^*) , \quad
\pi(u_{ij})=0 \; \mbox{otherwise},$$
where $\pi_\theta$ is an irreducible representation of the (rational) rotation 
algebra $A_\theta$ with the generators $v_1$ and $v_2$, and $\theta =
\frac{k-m}{3}$. 
In the other case, when $\sigma =(312)$, the non-zero elements of $\pi(u)$
satisfy
$$ V_3= \zeta^{m} V_2^*V_1^*, \quad V_1V_2= 
e^{\frac{2}{3} (m-k) \i\pi} V_2V_1 .$$
Thus the representation $\pi$ is of the form 
$$ \pi(u_{13})=\pi_{\theta'} (w_1), \quad \pi(u_{21})=\pi_{\theta'} (w_2), \quad
\pi(u_{32})=e^{\frac{2}{3} m \i\pi}\pi_{\theta'} (w_2^*w_1^*) , \quad
\pi(u_{ij})=0 \; \mbox{otherwise},$$
where $\pi_{\theta'}$ is an irreducible representation of the (rational)
rotation algebra $A_{\theta'}$ with the generators $w_1$ and $w_2$, and
${\theta'} =\frac{m-k}{3}=-\theta$.
This finishes the proof. 
\end{proof}

We conclude that when $k\neq m$, then $C(A_{p,k,m}(3))$ contains
noncommutative elements and thus as a quantum group is non-trivial. 

\begin{rem} \label{rem_akm}
For $k=m$, the quantum $A_{p,m,m}(3)$ is trivial.
Indeed, the non-zero elements of the representations related to permutations
$\sigma=(231)$ and $\sigma =(312)$ must commute and thus are given by $V_j=z_j
I_H$ for some complex $z_j$ of modulus 1. Such representations must be
one-dimensional by the same argument as described for the identity permutation.
Therefore, the quantum groups algebra is commutative and $A_{p,m,m}(3) = C(G)$
for some (classical) compact group $G$. 

The group $G$ can be recovered once we realize that its points are
in one-to-one correspondence with the irreducible representations of
$\mathsf{A}=C(G)$ and that the group multiplication reflects the
comultiplication. More explicitly, set $\theta=\zeta^{-m}$ and let 
$e=(123)$, $x=(231)$ and $y=(312)$ denote the elements of the  alternating
group $A_3$. Define 
\begin{equation}
(z,w;e)=\left( \begin{array}{ccc} z & 0 & 0 \\ 0 & w & 0 \\ 0 & 0
&
\overline{zw} \end{array} \right), \;
(z,w;x)=\left( \begin{array}{ccc} 0 & z & 0 \\ 0 & 0 & w \\ 
\theta \overline{zw} & 0 & 0 \end{array} \right), \; 
(z,w;y)=\left( \begin{array}{ccc} 0 & 0 & z \\ w & 0 & 0 \\ 0 &
\bar{\theta }\overline{zw} & 0 \end{array} \right).
\end{equation}
Then $G=\{ (z,w;a); a\in A_3, z,w\in \T\},$ and the multiplication in $G$ is
defined by $ (z_1,w_1;e) \cdot (z_2,w_2;a)= (z_1z_2,w_1w_2;a)$ for any $a\in
A_3$ and,  by 
$$\begin{array}{ll}
 (z_1,w_1;x) \cdot (z_2,w_2;e)=  (z_1w_2,w_1\overline{z_2w_2};x), 
 &   (z_1,w_1;y) \cdot (z_2,w_2;e)=  (z_1\overline{z_2w_2}, w_1z_2;y),
  \\ 
 (z_1,w_1;x) \cdot (z_2,w_2;x)=  (z_1w_2,\theta w_1\overline{z_2w_2};y),
 &  (z_1,w_1;y) \cdot (z_2,w_2;x)=  (\theta z_1\overline{z_2w_2}, w_1z_2;e),
 \\
 (z_1,w_1;x) \cdot (z_2,w_2;y)=  (z_1w_2,\bar{\theta} w_1\overline{z_2w_2};e),
 &  (z_1,w_1;y) \cdot (z_2,w_2;y)=  (\bar{\theta} z_1\overline{z_2w_2},
w_1z_2;x).
 \end{array}$$
\end{rem}

\medskip
Let us now turn our attention to the case when $k+m\in 3\mathbb{N}$ or
$m\equiv -k ({\rm mod } 3)$. Then all $\co{n}{l}$'s equal 1 and the
quantum group $A_{p,-m,m}(3)$ is more complicated than in the previous case; we
shall not study it here in details. However, the same method as in the proof of
Theorem \ref{thm_akm_reps} allows to establish the existence of torus-like
representations of $A_{p,-m,m}(3)$.

\begin{prop}
Let $\sigma$ be one of the permutations $\sigma_a=(231)$ or $\sigma_b=(312)$ and
let $V_1$ and $V_2$ be two unitary operators on a Hilbert space $H$ satisfying 
$$V_1V_2 = c(\sigma) V_2V_1,$$
where $c(\sigma_a)=\zeta^{-m}$, $c(\sigma_b)=\zeta^{m}$. 
Then $\pi$ defined as 
 $$ \pi(u_{1\sigma(1)})=V_1, \quad \pi(u_{2\sigma(2)})=V_2, \quad
\pi(u_{3\sigma(3)})= \overline{c(\sigma)} V_2^*V_1^* , \quad
\pi(u_{ij})=0 \; \mbox{otherwise},$$
is a $*$-representation of $A_{p,-m,m}(3)$. 
\end{prop}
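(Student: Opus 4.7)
The plan is to verify that $\pi$ preserves both defining families of relations for the universal C${}^*$-algebra of $A_{p,-m,m}(3)$, namely the unitarity condition \eqref{U} and the twisted determinant condition \eqref{TD}; by universality, $\pi$ then extends uniquely to a $*$-representation on the whole algebra.

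The unitarity check is immediate. The image matrix $\pi(u)$ is of monomial type, with exactly one non-zero entry in each row and each column, placed according to $\sigma$, and each of these entries is unitary: $V_1$ and $V_2$ by hypothesis, and $V_3 := \overline{c(\sigma)}\,V_2^*V_1^*$ as a unimodular scalar times a product of unitaries. The relations $\sum_s \pi(u_{js})\pi(u_{ks})^* = \delta_{jk}\,1_H$ and $\sum_s \pi(u_{sj})^*\pi(u_{sk}) = \delta_{jk}\,1_H$ therefore collapse to $V_kV_k^* = V_k^*V_k = 1_H$ for each $k$, which are automatic.

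For the twisted determinant condition I would fix $(a_1,a_2,a_3)$ and observe that a summand $E_{i_1i_2i_3}\,\pi(u_{a_1i_1})\pi(u_{a_2i_2})\pi(u_{a_3i_3})$ can survive only when $i_k=\sigma(a_k)$ for each $k$, because $\pi(u_{a,b})=0$ whenever $b\neq \sigma(a)$. If $(a_1,a_2,a_3)$ has a repeated index then so does $(\sigma(a_1),\sigma(a_2),\sigma(a_3))$, both $E$-values vanish and \eqref{TD} holds trivially. Otherwise $(a_1,a_2,a_3)=\tau\in S_3$ and \eqref{TD} reduces to the six scalar identities
\[
V_{\tau(1)}V_{\tau(2)}V_{\tau(3)} \;=\; \frac{E_{\tau(1)\tau(2)\tau(3)}}{E_{\sigma\tau(1)\sigma\tau(2)\sigma\tau(3)}}\cdot 1_H, \qquad \tau\in S_3.
\]

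The heart of the proof is verifying these six identities. After substituting $V_3=\overline{c(\sigma)}\,V_2^*V_1^*$, each triple product collapses to a scalar by repeated application of the hypothesis $V_1V_2=c(\sigma)V_2V_1$ together with its $*$-consequences (e.g.\ $V_1V_2^*=c(\sigma)^{-1}V_2^*V_1$); the resulting phases are powers of the cube root of unity $\zeta$. These phases then have to be matched, case by case over $\tau\in S_3$, against the twelve ratios of entries of the array $E$ read off from Definition \ref{def_akm} with $k\equiv -m\pmod 3$. The case $\sigma=\sigma_b$ can be handled in parallel with $\sigma=\sigma_a$, or alternatively deduced from it via Lemma \ref{lem_iso} applied to the transposition that exchanges the two $3$-cycles. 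The only real obstacle is bookkeeping: tracking the phase produced by each commutation move and matching it against the structure of $E$; the fact that the matching is consistent across all six $\tau$'s is precisely what forces the values of $c(\sigma)$ stated in the proposition.
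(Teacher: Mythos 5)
Your overall strategy --- verify (U) and (TD) on the generators and invoke universality --- is exactly the paper's, and your reductions are sound: the monomial form of $\pi(u)$ makes (U) trivial, and (TD) collapses to six scalar identities $E_{\sigma\tau(1),\sigma\tau(2),\sigma\tau(3)}V_{\tau(1)}V_{\tau(2)}V_{\tau(3)}=E_{\tau(1)\tau(2)\tau(3)}$, $\tau\in S_3$. The gap is that you stop exactly where the content of the proposition begins: those six identities are never checked, only declared to be ``bookkeeping'' whose consistency ``forces the values of $c(\sigma)$ stated in the proposition.'' That last assertion cannot be waved at, and in fact it fails as written. Doing the $\tau=\mathrm{id}$ case for $\sigma_a=(231)$ with the array of $A_{p,-m,m}(3)$ gives $E_{231}V_1V_2V_3=E_{123}$, i.e.\ $V_3=\zeta^{-m}V_2^*V_1^*$, so $\overline{c(\sigma_a)}=\zeta^{-m}$ and hence $c(\sigma_a)=\zeta^{m}$; the $\tau=(213)$ case then forces $V_1V_2=\zeta^{m}V_2V_1$. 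This agrees with Theorem \ref{thm_akm_reps} (where $V_1V_2=\zeta^{k-m}V_2V_1=\zeta^{m}V_2V_1$ for $k=-m$), but it is the opposite of the proposition's $c(\sigma_a)=\zeta^{-m}$: for $m\neq 0$ the two constants in the statement are interchanged between $\sigma_a$ and $\sigma_b$. A proof that actually performs the computation must notice and resolve this; by deferring it, yours silently endorses the wrong phase.

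On method: the paper does not grind through all six identities. Since the $V_j$ are invertible, $ABC=\lambda\,1$ is invariant under cyclic rotation of the factors, so once one checks the compatibility condition \eqref{eq_akm_perm_cond} --- that the ratio $E_{\sigma(a)\sigma(b)\sigma(c)}/E_{abc}$ is constant on each cyclic class of words, a finite computation on the array alone --- the six identities collapse to two representatives: the one for $\tau=\mathrm{id}$, which is exactly the definition of $V_3$, and the one for a single transposition, which is exactly the assumed commutation relation \eqref{eq_akm_comm_v1v2}. Adopting that reduction would turn your ``bookkeeping'' into two one-line checks and would have exposed the sign issue immediately.
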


\begin{proof}
 Let $V_3=  \overline{c(\sigma)} V_2^*V_1^*$. The condition (U) is obviously
satisfied, since only one term in each row and each column is non-zero. Next we
verify that the compatibility condition \eqref{eq_akm_perm_cond} is satisfied in
our case (see the proof of Theorem \ref{thm_akm_reps}).
However, $V_1$ and $V_2$ are chosen in such a way that (TD),
i.e.\ $E_{\sigma(r),\sigma(k),\sigma(i)} V_r V_k V_i = E_{rki} 1$, holds for
any triple $(r,k,i)\in S_3$. 
\end{proof}

From the Proposition above we can conclude that, for $m\neq 0$, the quantum
group $A_{p,-m,m}(3)$ is non-trivial (as it contains a 'noncommutative'
representation). Moreover, it is immediate to note that for $|p|=1$ 
$A_{p,-m,m}(3)$ coincides with $SU_{p,m}(3)$. Then, by Proposition
\ref{prop_uq_in_suq}, $A_{p,0,0}(3)$ contains $U_{-p}(2)$ as a subgroup, and
thus is non-trivial when $p\neq \pm 1$. It remains to check
what happens when $m=0$ and $|p|\neq 1$. A partial answer to this question, with
additional assumption
that $p$ is real, is stated below. It turns out that the resulting quantum group
is again trivial. 

\begin{prop}
 If $p\in \R\setminus \{-1,0,1\}$ and $m=0$, then $A_{p,0,0}(3) \cong C(\T^2
\rtimes_\phi A_3)$, where $\T^2
\rtimes_\phi A_3$ denotes the semi-direct product of the
two-dimensional torus $\T^2$ and the alternating group $A_3$ (a subgroup of
$S_3$ composed of all even permutations). In particular, $A_{p,0,0}(3)$ is
trivial. 
\end{prop}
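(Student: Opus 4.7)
The plan is to establish the isomorphism $A_{p,0,0}(3)\cong C(G)$, with $G=\T^2\rtimes_\phi A_3$, in two stages: first produce a surjective $*$-homomorphism $\phi:A_{p,0,0}(3)\to C(G)$, and then show that $A_{p,0,0}(3)$ is commutative, so that $\phi$ is forced to be an isomorphism after matching characters.

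Stage one is direct. Specialising the matrices $(z,w;a)$ from Remark \ref{rem_akm} to $m=0$ (so $\theta=\zeta^{-m}=1$) produces a family of $3\times 3$ unitaries indexed by $(z,w;a)\in G$. A direct calculation verifies that these matrices, viewed as entries of a $3\times 3$ matrix over $C(G)$, satisfy the unitarity condition \eqref{U} and the twisted determinant condition \eqref{TD} for our array (where $E_{\sigma}=1$ for $\sigma\in A_3$ and $E_{\sigma}=p$ otherwise). The universal property of $A_{p,0,0}(3)$ then yields the surjection $\phi$, automatically compatible with the Hopf structures.

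Stage two, showing commutativity of $A_{p,0,0}(3)$, is the technical heart. Since $k+m=0\in 3\N$, formula \eqref{eq_akm_char_const} gives $\co{l}{n}=1$ for every $l\neq n$, placing us in case (b) of Theorem \ref{thm_comm_rel}, with Lemma \ref{lem_ij_rn_star} available. For $p\in\R$ and our array, the ratios $E_{irk}/E_{ikr}$ and $E_{jln}/E_{jnl}$ appearing in \eqref{eq_comm_rel_1} always lie in $\{p,p^{-1}\}$ and are therefore either equal or reciprocal, which forces $A_{r,k}^{n,l}=1$ in every admissible case. Each commutator $[u_{rn},u_{kl}]$ with $r\neq k$, $n\neq l$ is thus a real scalar multiple of a single off-diagonal product $u_{kn}u_{rl}$, the multiple being either $0$ or $\pm(p^2-1)/p$. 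Applying \eqref{eq_R_star} in our setting yields, moreover,
\[
u_{11}^*=u_{22}u_{33}+p\,u_{23}u_{32},\quad u_{22}^*=u_{11}u_{33}+p^{-1}u_{13}u_{31},\quad u_{33}^*=u_{11}u_{22}+p\,u_{12}u_{21},
\]
together with analogous identities for the off-diagonal adjoints, so commutativity reduces to showing that the off-diagonal products $u_{ij}u_{ji}$ and $u_{ii}u_{jk}$ (with $i,j,k$ all distinct) vanish. These vanishings I would obtain from \eqref{TD} applied to triples $(a_1,a_2,a_3)$ containing a repeated index, such as $(1,1,2)$, $(1,2,2)$, and their cyclic cousins: each produces a sum of six cubic monomials equal to $E_{a_1a_2a_3}=0$, and after collecting like terms via \eqref{eq_comm_rel_row1} the sum factors as $(1+p)$ times a cubic polynomial in the generators, so $p\neq -1$ lets one divide out and read off the desired cubic identity. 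Combining several such identities with the unitarity \eqref{U} and the above adjoint formulas should extract the off-diagonal vanishings, and hence full commutativity.

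Once commutativity is established, $A_{p,0,0}(3)=C(X)$ for its spectrum $X$, and $\phi$ corresponds to a continuous injection $G\hookrightarrow X$; classification of the characters of $A_{p,0,0}(3)$ using the vanishings just obtained, together with \eqref{U}, identifies any such character with a unitary matrix having at most one non-zero entry per row and column, of one of the three shapes of Remark \ref{rem_akm}, i.e.\ exactly the elements of $G$. The main obstacle is the combinatorial extraction step in stage two: the quadratic relations \eqref{eq_comm_rel_row1}--\eqref{eq_comm_rel_1} alone do not force commutativity, and one must exploit both $p\neq -1$ (to divide out the factor $1+p$ in the cubic identities) and $p\neq \pm 1$ (so that the commutator coefficient $\pm(p^2-1)/p$ is non-zero, making the algebraic identities translate into genuine vanishing relations in $A_{p,0,0}(3)$).
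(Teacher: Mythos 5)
Your Stage 1 (the surjection onto $C(\T^2\rtimes_\phi A_3)$ from the universal property) is fine and matches the endpoint of the paper's argument. The genuine gap is in Stage 2. The proposed source of the missing vanishing relations is empty: for the array of $A_{p,0,0}(3)$ the constant $E_{i_1i_2i_3}$ depends only on the parity of the permutation, so the twisted determinant relation for a repeated-index triple such as $(1,1,2)$ collapses term by term once you substitute the row relation \eqref{eq_comm_rel_row1} --- e.g. $E_{123}u_{11}u_{12}u_{23}+E_{213}u_{12}u_{11}u_{23}=\bigl(1\cdot(-p)+p\bigr)u_{12}u_{11}u_{23}=0$, and likewise for the other two pairs. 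There is no factor $1+p$ to divide out; these identities are already consequences of the quadratic relations and yield nothing new. (Even if they did, extracting a quadratic vanishing such as $u_{21}u_{12}=0$ from a cubic identity is not a matter of ``dividing'' in a noncommutative algebra.) The products you need to kill do vanish in the C${}^*$-algebra, but only because they vanish in every irreducible representation; they are not proved as algebraic identities in the paper, and your plan gives no working mechanism for proving them.

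The paper's actual route is representation-theoretic, and commutativity is its \emph{output}, not an input. Step 1 derives $u_{rn}u_{rj}^*=0$ and $u_{rn}^*u_{rj}=0$ for $n\neq j$ from the Kac-type orthogonality relations (i.e.\ from \eqref{eq_modular_prop} with all $M_j$ equal) via a coefficient computation ending in $(p-\frac1p)^2\,u_{rj}u_{rn}^*=0$ --- this is precisely where $p\neq\pm1$ enters. Step 2 upgrades this to: every $u_{rn}$ is a normal partial isometry and the full system \eqref{eq_s3_col}--\eqref{eq_s3_mix} holds. Step 3 then applies Lemma \ref{lem_akm_invariance}: in any irreducible representation each $\pi(u_{ij})$ has invariant range, hence is $0$ or unitary, so $\pi$ is supported on a permutation; transpositions are excluded because $V_1V_2=p^{-2}V_2V_1$ cannot hold for unitaries when $|p|\neq1$, and for the even permutations the commutation constant equals $1$, forcing the $V_j$ to be scalars and $\pi$ to be one-dimensional. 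Your proposal bypasses all of this machinery (in particular the modular/orthogonality input, which is indispensable) and the purely algebraic substitute does not close. A secondary point: not every ``extra term'' $u_{kn}u_{rl}$ vanishes in the algebra (e.g.\ $u_{12}u_{23}\neq0$, as the characters attached to the $3$-cycles show), so your reduction of commutativity to a short list of vanishing products relies on an unverified case-check that the coefficient $B_{r,k}^{n,l}$ is zero exactly when the extra term is not of the killable form.
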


\begin{proof}
Recall that $A_{p,0,0}(3)$ is of Kac type and that we have
$\co{n}{l}=1$ for any $n\neq l$. Then
$\frac{E_{njl}}{E_{jln}}=\frac{E_{nlj}}{E_{ljn}}$ for any $(n,j,l)\in S_3$
and it makes sense to define $\omega_n:=\displaystyle \frac{E_{.{}_*
n}}{E_{n.{}_* }}$. One checks that, since $m=0$, $\omega_n=1$ for $n=1,2,3$. 

Moreover, it is easy to note that $\frac{E_{jln}}{E_{jln}}$
can take only two (real) values: $p$ or $\frac1{p}$. This implies that, in
$u_{rn} u_{kl} = A_{r,k}^{n,l} u_{kl} u_{rn} +B_{r,k}^{n,l} u_{kn} u_{rl}$, the
constant $A_{r,k}^{n,l}$  must equal 1 (check all four possible cases
$\frac{E_{irk}}{E_{ikr}}, \frac{E_{jln}}{E_{jnl}}\in \{p,\frac1{p}\}$).
Similarly, we verify that $B_{r,k}^{n,l}=\frac{E_{irk}}{E_{ikr}}
-\frac{E_{jln}}{E_{jnl}}.$
Consequently, the commutation relation \eqref{eq_comm_rel_1} takes the form 
\begin{equation} \label{eq_mix_special}
u_{rn} u_{kl} = \,u_{kl} u_{rn}  + \left(\frac{E_{irk}}{E_{ikr}}
-\frac{E_{jln}}{E_{jnl}} \right)\, u_{kn} u_{rl}.
\end{equation}
The main part of the proof is divided into three steps.

\medskip
\noindent {\bf Step 1.}
We aim to show that $u_{rn}u_{rj}^*=0$ and $u_{rn}^*u_{rj}=0$ for any $r$ and
$n\neq j$. For that, we write the orthogonality relation (O)
$$u_{rn}^* u_{rj} + u_{kn}^* u_{kj} + u_{in}^* u_{ij}=0 \qquad \mbox{for}
\quad n\neq j,$$
which holds because $A_{p,0,0}$ is of Kac type, and using \eqref{eq_R_star},
\eqref{eq_mix_special} and other commutation relations we rearrange the
terms in the same order $u_{k.}u_{i.}u_{r.}$. Therefore
\begin{eqnarray*}
 \lefteqn{ u_{rj}u_{rn}^* + u_{kj}u_{kn}^* + u_{ij}u_{in}^*} \\
&=& 
\frac{E_{njl}}{E_{rik}}\ u_{rj}u_{ij}u_{kl} +
\frac{E_{nlj}}{E_{rik}}\ u_{rj}u_{il}u_{kj}
 + \frac{E_{njl}}{E_{kri}}\ u_{kj}u_{rj}u_{il} \\ & &
 + \frac{E_{nlj}}{E_{kri}}\ u_{kj}u_{rl}u_{ij}
 +\frac{E_{njl}}{E_{irk}}\ u_{ij}u_{rj}u_{kl} 
 +  \frac{E_{nlj}}{E_{irk}}\ u_{ij}u_{rl}u_{kj} \\
&=& 
\frac{E_{njl}}{E_{rik}}\ u_{rj}u_{ij}u_{kl} +
\frac{E_{nlj}}{E_{rik}}\ u_{rj}u_{il}u_{kj}
 - \frac{E_{njl}}{E_{kri}} \frac{E_{irk}}{E_{ikr}}\ u_{rj} u_{il}u_{kj} 
+ \frac{E_{njl}}{E_{kri}}\frac{E_{irk}}{E_{ikr}}
\left( \frac{E_{rik}}{E_{rki}}- \frac{E_{njl}}{E_{nlj}} \right) u_{rj}
u_{ij}u_{kl} \\ & & 
 - \frac{E_{nlj}}{E_{kri}} \frac{E_{rik}}{E_{rki}} u_{rl}u_{ij}u_{kj} -
\frac{E_{nlj}}{E_{kri}}\left(
\frac{E_{irk}}{E_{ikr}}-
\frac{E_{njl}}{E_{nlj}} \right) 
\left( u_{rj}u_{ij}u_{kl} - \left( \frac{E_{rik}}{E_{rki}}-
\frac{E_{nlj}}{E_{njl}} \right) u_{rj}u_{il}u_{kj} \right)
\\ & & 
 -\frac{E_{njl}}{E_{irk}}\frac{E_{kri}}{E_{kir}}\ u_{rj}u_{ij}u_{kl}
 +  \frac{E_{nlj}}{E_{irk}}\ u_{rl}u_{ij}u_{kj}
- \frac{E_{nlj}}{E_{irk}}\left( \frac{E_{kri}}{E_{kir}} -
\frac{E_{njl}}{E_{nlj}} \right)
u_{rj}u_{il}u_{kj}.
\\&=& 
c_1 \, u_{rj}u_{ij}u_{kl} 
+ 
c_2\, u_{rj}u_{il}u_{kj} 
+ 
c_3 \, u_{rl}u_{ij}u_{kj}. 
\end{eqnarray*}
Due to the fact that $\omega_n=1$ for $n=1,2,3$, we immediately see that
$$c_3 := \frac{E_{nlj}}{E_{irk}}- \frac{E_{nlj}}{E_{kri}}
\frac{E_{rik}}{E_{rki}}
  = \frac{E_{nlj}}{E_{irk}} ( 1 - \frac{\omega_k}{\omega_i} )=0.
$$
On the other hand, with the notation 
\begin{equation*} \label{eq_assumption0}
x= \frac{E_{rik}}{E_{rki}} \quad \mbox{and} \quad
y=  \frac{E_{njl}}{E_{nlj}}, 
\end{equation*}
we have 
$$  \frac{E_{kri}}{E_{kir}}= \frac{E_{rik}}{E_{irk}} = \frac{E_{rik}}{E_{rki}}
\frac{E_{rki}}{E_{irk}} = x\omega_i = x \quad \mbox{and} \quad
 \frac{E_{irk}}{E_{ikr}}= \frac{E_{rki}}{E_{kri}} =
\frac{E_{rki}}{E_{rik}} \frac{E_{rik}}{E_{kri}} = \frac{\omega_k}{x}
= \frac{1}{x}.$$
This allows to compute the remaining two constants:
\begin{eqnarray*}
c_1 &:=&  
\frac{E_{njl}}{E_{rik}}+
\frac{E_{njl}}{E_{kri}}\frac{E_{irk}}{E_{ikr}}
\left( \frac{E_{rik}}{E_{rki}}- \frac{E_{njl}}{E_{nlj}} \right) 
- \frac{E_{nlj}}{E_{kri}}\left(
\frac{E_{irk}}{E_{ikr}}-\frac{E_{njl}}{E_{nlj}} \right) 
 -\frac{E_{njl}}{E_{irk}}\frac{E_{kri}}{E_{kir}}
\\ &=&  
\frac{E_{njl}}{E_{rik}}\big [ 1+ \frac{\omega_k}{x}
\left(x- y \right) - \frac{1}{y\omega_k }\left(
\frac{1}{x}-y \right)  -x^2 \big]
= - \frac{E_{njl}}{E_{rik}} \left(x-\frac{1}{x} \right)^2
\end{eqnarray*}
and
\begin{eqnarray*}
c_2 &:=&  \frac{E_{nlj}}{E_{rik}}
- \frac{E_{njl}}{E_{kri}} \frac{E_{irk}}{E_{ikr}}
 + \frac{E_{nlj}}{E_{kri}}\left( \frac{E_{irk}}{E_{ikr}}
 - \frac{E_{njl}}{E_{nlj}}\right) 
 \left( \frac{E_{rik}}{E_{rki}}-\frac{E_{nlj}}{E_{njl}} \right)
- \frac{E_{nlj}}{E_{irk}}\left( \frac{E_{kri}}{E_{kir}} -
\frac{E_{njl}}{E_{nlj}} \right)
\\ &=& 
 \frac{E_{nlj}}{E_{rik}} \big[ 1 - \frac{y \omega_k}{x}
 + \omega_k \left( \frac{1}{x} - y \right) \left( x -\frac{1}{y} \right)
- x \left( x  - y \right) \big] = 
-\frac{E_{nlj}}{E_{rik}} \left(x-\frac{1}{x} \right)^2.
\end{eqnarray*}
So finally, we get 
\begin{eqnarray*}
0 
&=&
-\left( x-\frac{1}{x} \right)^2 \left( \frac{E_{njl}}{E_{rik}}
u_{rj}u_{ij}u_{kl}  +
\frac{E_{nlj}}{E_{rik}} u_{rj}u_{il}u_{kj} \right) 
= -\left( p-\frac{1}{p} \right)^2 u_{rj} u_{rn}^*, 
\end{eqnarray*}
because $x=p$ or $x=\frac{1}{p}$. Since $p\neq \pm 1$ we conclude that
$u_{rj}u_{rn}^*=0$.

Starting from the unitarity relations:
$$u_{rn}^* u_{rj} + u_{kn}^* u_{kj} + u_{in}^* u_{ij}=0 \qquad \mbox{for}
\quad n\neq j,$$
we apply the same technique to prove that $u_{rn}^*u_{rj}=0$. 

\medskip
\noindent {\bf Step 2.} We will prove that the relations
\eqref{eq_s3_col}-\eqref{eq_s3_mix} hold. For let us observe that the
relations $u_{rn}u_{rj}^*=0$ and
$u_{rn}^*u_{rj}=0$ (for $n\neq j$), together with \eqref{eq_comm_rel_row1},
yield
$$ (u_{rn}u_{rj})(u_{rn}u_{rj})^* =
-\frac{E_{ljn}}{E_{lnj}} u_{rj}(u_{rn}u_{rj}^*)u_{rn}^*=0, $$
from which we deduce that $u_{rn}u_{rj}=0$.
By applying the antipode we get that 
\begin{equation*} \label{eq_rn_star_kn}
u_{rn}^*u_{kn} = u_{kn}u_{rn}^*=0 \quad \mbox{and} \quad u_{kn}u_{rn}=0  
\end{equation*}
for $r\neq k$. This, together with Theorem
\ref{thm_comm_rel}, Theorem \ref{thm_pi} and Lemma \ref{lem_ij_rn_star}, shows
that each $u_{rn}$ is a normal partial isometry and that 
\eqref{eq_s3_col}-\eqref{eq_s3_mix} are satisfied. 

\medskip
\noindent {\bf Step 3.} Let us now consider an irreducible $*$-representation
$\pi$ of $\A=C(A_{p,0,0}(3))$ on a Hilbert space $H$. Applying Lemma
\ref{lem_akm_invariance} we show that $K_{ij}=\pi(u_{ij})(H)=
\pi(u_{ij}u_{ij}^*)(H)$, $i,j=1,2,3$, is $\pi$-invariant, hence $\{0\}$ or
$H$. Repeating the reasoning from the proof of Theorem \ref{thm_akm_reps} we
realize that $\pi$ can be of the form as described therein, i.e.\ it is related
to an even permutation $\sigma\in S_3$ in the sense that the only non-zero
(unitary) elements are $\pi(u_{j,\sigma(j)})$. It remains to check if $\pi$ can
be related to a transposition. The compatibility condition
\eqref{eq_akm_perm_cond} is satisfied. Elementary calculations shows, however,
that if $V_1=\pi(u_{1,\sigma(1)})$ and $V_2=\pi(u_{2,\sigma(2)})$ was two
unitaries and $\sigma$ was a transposition, then \eqref{eq_akm_comm_v1v2} would
lead to $V_1V_2 =p^{-2} V_2 V_1$ which can never hold for unitaries due to the
assumption $|p|\neq 1$. 

We conclude that all irreducible representations of $A_{p,0,0}(3)$ are described
in Theorem \ref{thm_akm_reps}. However, since $m=k=0$, by the same reasoning
as in Remark \ref{rem_akm}, we see that the quantum groups algebra $\mathsf{A}$
is commutative and thus $A_{p,0,0}(3)$ is the quantum version of a classical
group $G$, described in Remark \ref{rem_akm} with $\theta=1$. Then, one can
observe that 
$G= \T^2 \rtimes_\phi A_3$, 
where $\phi: A_3 \to {\rm Aut}\, \T^2$ is defined by 
$$\phi_{e}(z,w)=(z,w), \quad \phi_{x}(z,w)=(w, \overline{zw}), \quad
\phi_{y}(z,w)=(\overline{zw},z).$$
Indeed, it is a direct calculation to check that $(z_1,w_1;a) \cdot (z_2,w_2;b)=
((z_1,w_1) \phi_a(z_2,w_2);ab)$ for any $a,b\in A_3$ and 
$(z_1,w_1),(z_2,w_2)\in \T^2$. 

This means that $\G$ is the quantum version of the semi-direct product of the
two-dimensional torus $\T^2$ and the
alternating group $A_3$.
\end{proof}

We collect the result on non-triviality of $A_{p,k,m}(3)$. 
\begin{cor}
Let $k,m\in \{0,1,2\}$. The quantum group $A_{p,k,m}(3)$ is non-trivial in the
following cases:
\begin{enumerate}
 \item for $k+m\not\in 3\N$ and $k\neq m$;
 \item for $m\neq 0$ and $k=-m$;
 \item for $k=m=0$ and $|p|=1$, $p\in \C\setminus \R$.
\end{enumerate}
\end{cor}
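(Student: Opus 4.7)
The plan is to treat each of the three cases separately, in each case exhibiting a quotient or representation of $C(A_{p,k,m}(3))$ that is manifestly noncommutative.

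For case (1), where $k+m \not\in 3\N$ and $k \neq m$, I would appeal directly to Theorem \ref{thm_akm_reps}. Since the sum $k+m$ is not a multiple of $3$, all characteristic constants $\co{l}{n}$ differ from $1$ by \eqref{eq_akm_char_const}, so the theorem applies and gives (among others) a family of representations factoring through the rotation algebra $A_\theta$ with $\theta = \frac{k-m}{3}$. Because $k \neq m$ and $k,m \in \{0,1,2\}$, we have $\theta \in \{\pm \frac{1}{3}, \pm \frac{2}{3}\}$, so $A_\theta$ is a genuine (noncommutative) rational rotation algebra. The existence of a noncommutative quotient of $\A$ forces $\A$ itself to be noncommutative, hence $A_{p,k,m}(3)$ is non-trivial.

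For case (2), where $m \neq 0$ and $k \equiv -m \pmod{3}$, the sum $k+m \in 3\N$, so Theorem \ref{thm_akm_reps} does not apply, but the subsequent Proposition on torus-like representations does. I would invoke it with either $\sigma = (231)$ or $\sigma = (312)$ and choose $V_1,V_2$ to be the two generators of an irreducible representation of the rational rotation algebra $A_{\pm m/3}$, which makes sense because the prescribed phase $c(\sigma) = \zeta^{\pm m}$ is a nontrivial cube root of unity precisely when $m \neq 0$. This yields a $*$-representation of $C(A_{p,-m,m}(3))$ in which two generators fail to commute, giving non-triviality.

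For case (3), where $k = m = 0$ and $p \in \T \setminus \R$, I would use the observation made in the text that for $|p|=1$ the array defining $A_{p,-m,m}(3)$ coincides with the one defining $SU_{p,m}(3)$ (specialized here to $m = 0$), so $A_{p,0,0}(3) = SU_{p,0}(3)$. Then Proposition \ref{prop_uq_in_suq} supplies a surjective Hopf-$*$-homomorphism $\Gamma\colon \A(SU_{p,0}(3)) \to \A(U_{-p}(2))$. Since $p \in \T \setminus \R$ gives $-p \neq \pm 1$, the quantum group $U_{-p}(2)$ is non-trivial (its defining relation $ac = \bar{q}ca$ with $q = -p$ noncommutative), and hence $A_{p,0,0}(3)$ is non-trivial as well.

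The only place where there is anything to verify beyond quoting earlier results is confirming that the parameter $\theta$ (respectively the phase $c(\sigma)$) produced in cases (1) and (2) actually lies outside $\Z$ (respectively differs from $1$); this is a short check on the arithmetic modulo $3$ under the given hypotheses. No other obstacle is foreseen.
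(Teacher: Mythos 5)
Your proposal is correct and follows essentially the same route as the paper: case (1) via the rotation-algebra representations of Theorem \ref{thm_akm_reps}, case (2) via the torus-like representations of the subsequent Proposition, and case (3) via the identification $A_{p,0,0}(3)=SU_{p,0}(3)$ for $|p|=1$ together with Proposition \ref{prop_uq_in_suq} and the non-triviality of $U_{-p}(2)$ for $-p\neq\pm1$. The arithmetic checks you flag (that $\theta=\frac{k-m}{3}\notin\Z$ and $c(\sigma)\neq1$ under the stated hypotheses) are exactly the points the paper verifies as well.
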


\section{Two-blocks decomposition}
\label{sec_decomposition}

In this Section we shall show that if the fundamental corepresentation $u$ of a
PW-quantum group admits a decomposition into two blocks, then the resulting
quantum group is either $U_q(2)$ or $C(\T^2)$. The proof makes no reference to
morphism
properties, since in this case it is possible to describe the
related quantum groups using explicit calculations. This method seems to be
more instructive and sheds more light on where the different relations (in the
general case) come from. In Remark \ref{rem_uq2_and_morphisms} we compare the
results obtained by direct calculations with these due to morphisms properties. 

\begin{thm} \label{thm_uq2_or_t2}
Let the fundamental corepresentation $u$ of a PW-quantum group $\G$ admits a
decomposition into two blocks. 
Then $\G \cong C(\T^2)$ or $\G\cong U_q(2)$. 

Moreover, $\G\cong U_q(2)$ if and only if
$\displaystyle \frac{E_{kri}}{E_{kir}} = \frac{E_{rik}}{E_{irk}}$ 
and $\displaystyle
\frac{E_{kri}}{E_{kir}}=\frac{\bar{E}_{ikr}}{\bar{E}_{rki}}$, where $k$
denotes the index of the 1-dimensional block and $r$ denotes
the first row of the $2\times 2$-block.
In this case, $\displaystyle q=-\frac{E_{kri}}{E_{kir}}\in \C^*$.
\end{thm}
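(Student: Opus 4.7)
The plan is to work directly from the block decomposition together with the unitarity condition (U) and the twisted determinant condition (TD), without further invoking the morphism machinery beyond what is already established.

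First I would fix notation. Relabel so that the $1$-dimensional block sits at position $(k,k)$ and the $2\times 2$ block occupies rows and columns $\{r,i\}$, with $r$ the ``first'' one. Set $v:=u_{kk}$, $a:=u_{rr}$, $b:=u_{ri}$, $c:=u_{ir}$, $d:=u_{ii}$. Reducibility forces $u_{kj}=u_{jk}=0$ for $j\neq k$; then (U) makes $v$ a unitary and the $2\times 2$ matrix with entries $a,b,c,d$ a unitary over $\A$.

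Next, I would write out the six instances of (TD), one for each $(a_1,a_2,a_3)\in S_3$. Thanks to the vanishing off-block entries each expansion collapses to a two-term relation. Grouping the six relations by the position in which $k$ occurs (equivalently, where $v$ sits), and multiplying through by $v$ or $v^*$ to cancel the $v$-dependence, I extract two families: (i) three identities of the shape $\lambda\,ad+\mu\,bc=v^{*}$ together with their swapped siblings $\lambda'\,da+\mu'\,cb=v^{*}$, where $\lambda,\mu,\lambda',\mu'$ are built from ratios of $E$-constants; and (ii) commutation rules that push $v$ past $a,b,c,d$ up to scalar factors, again depending only on the $E$-ratios.

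The heart of the proof is a case split driven by the two displayed consistency conditions of the statement. When they fail, subtracting two of the identities in (i) yields $(\lambda-\lambda')bc=0$ with $\lambda\neq\lambda'$, so $bc=0$; chasing this through (U) and the remaining identities forces $b=0$ or $c=0$, after which $a,d,v$ are mutually commuting unitaries and $\A$ is the universal commutative $C^*$-algebra they generate, namely $C(\T^2)$. When the conditions hold, set $q=-E_{kri}/E_{kir}$; I would then check term by term that the identities in (i) and (ii), together with (U), reproduce exactly the defining relations of $U_q(2)$ in Definition \ref{def_uq2}. The identifications $d=a^*v^*$ and $b=qc^*v^*$ are forced by solving (i) for $b$ and $d$ in terms of $a,c,v$, while (ii) specialises to $ac=\bar q\,ca$, $ac^*=qc^*a$, $av=va$ and $cv=e^{-2\i t}vc$ with $q=|q|e^{\i t}$. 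The universal property of both $\A$ and $\A_q$ then upgrades the map $u\mapsto$ (the matrix displayed in Definition \ref{def_uq2}) to a $*$-isomorphism.

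The main obstacle is isolating the precise role of each of the two consistency conditions: the first is what rescues compatibility among the identities in (i) (preventing a forced $bc=0$), while the second is what makes (ii) consistent with the antipode formula from Corollary \ref{cor_modular_prop} and, in the end, is exactly what ensures $ac^*=qc^*a$ with the correct conjugate of $q$. A careful bookkeeping of signs, conjugates, and indices is unavoidable. One should also verify that in the failure case no residual noncommutative structure (such as a rotation-algebra piece of the sort appearing in Theorem \ref{thm_akm_reps}) can survive; this is ruled out because only a single $2\times 2$ block is present, so $bc=0$ immediately cascades into the commutativity of all generators.
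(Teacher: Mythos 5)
Your overall strategy coincides with the paper's: after relabelling via Lemma \ref{lem_iso}, work directly with the unitarity condition (U) and the twisted determinant condition (TD) on the block form $u=v\oplus\bigl(\begin{smallmatrix} a & b \\ c & d\end{smallmatrix}\bigr)$, split into cases according to the two consistency conditions, and identify the outcome with $C(\T^2)$ or $U_q(2)$. There is, however, a genuine gap in how you harvest relations from (TD). The condition (TD) is imposed for \emph{all} triples $(a_1,a_2,a_3)$, not only for permutations, and you explicitly restrict to ``the six instances, one for each $(a_1,a_2,a_3)\in S_3$''. The instances with a repeated row index, e.g.\ $(a_1,a_2,a_3)=(1,2,2)$, $(2,2,1)$, $(1,3,3)$, $(3,3,1)$, have right-hand side $0$ and collapse to the in-row relations $ab+\mu ba=0$ and $cd+\mu dc=0$ (with $\mu=E_{132}/E_{123}$), while $(2,1,2)$ and $(3,1,3)$ give further compatibility relations. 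These cannot be recovered from the six permutation instances together with (U): those only yield identities of the form $ad+\mu bc=v^*=da+\mu^{-1}cb$ and $avd+\tfrac{E_{312}}{E_{213}}bvc=1$, none of which is a commutation rule. Yet the in-row relations are exactly what is needed to solve for $d=a^*v^*$ and $b=\mu c^*v^*$ (the computation $d=a^*(ad)+c^*(cd)=a^*v^*-\mu(a^*b+c^*d)c$ uses $cd=-\mu dc$), and hence to obtain every relation in your ``family (ii)'' ($av=va$, $cv=e^{-2\i t}vc$, $ac=\bar q\,ca$, $ac^*=qc^*a$). As written, family (ii) has no source in your derivation, so the $U_q(2)$ identification cannot be completed.

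Two smaller points. In the degenerate branch you must conclude $b=0$ \emph{and} $c=0$ (not ``or''); the paper gets this from $ba=0$, $dc=0$ and the unitarity of $ad=v^*$, again using repeated-index instances of (TD). And the second consistency condition, $E_{312}/E_{213}=\bar\mu$, does not come from ``consistency with the antipode formula'': it arises by comparing the (TD)-instances for the permutations $(2,1,3)$ and $(3,1,2)$ (which, after substituting $d=a^*v^*$ and $b=\mu c^*v^*$, give $aa^*+\tfrac{E_{312}}{E_{213}}\mu\, c^*c=1$) with $aa^*+|\mu|^2c^*c=1$ obtained from (U); when it fails one is forced to $c^*c=0$ and falls back to the torus. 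With the full set of (TD)-instances restored, the rest of your plan goes through essentially as in the paper.
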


\begin{proof}
Let us assume that the corepresentation matrix $u$  decomposes into a block of
dimension 1 and a block of dimension 2. Due to Lemma \ref{lem_iso}, we can
assume without loose of generality that $u$ is of the form
\begin{equation} \label{eq_u_decomp_form}
u = \left ( \begin{array}{ccc}
u_{11} & 0 & 0\\
0 & u_{22} & u_{23} \\
0 & u_{32} & u_{33} 
\end{array} \right )
:=
\left ( \begin{array}{ccc}
v & 0 & 0 \\
0 & a & b \\
0 & c & d
\end{array} \right ) = v\oplus \left ( \begin{array}{cc}
a & b \\
c & d
\end{array} \right ). 
\end{equation}

From the unitarity condition (U) of $u$ we have the following relations
$$\begin{array}{cccccccc}
{\rm (A)} & aa^*+bb^*=1 & \quad & {\rm (B)} & b^*b+d^*d=1 &
\quad & {\rm (C)} & cc^*+dd^*=1 \\
{\rm (D)} & a^*a+c^*c=1 & \quad & {\rm (E)} & ac^*+bd^*=0 & 
\quad & {\rm (F)} & a^*b+c^*d=0
\end{array}$$
and 
$${\rm (V)} \ \ \ vv^*=1=v^*v.$$

On the other hand the twisted determinant condition (TD) applied to the
permutations gives:
$$\begin{array}{llrlllr}
{\rm (a1)} &  {[123]} & E_{123}vad+E_{132}vbc=E_{123}1 
& \quad &
{\rm (a2)} &  {[132]} & E_{132}vda+E_{123}vcb=E_{132}1 
\\
{\rm (a3)} &  {[213]} & E_{213}avd+E_{312}bvc=E_{213}1
& \quad &
{\rm (a4)} &  {[231]} & E_{231}adv+E_{321}bcv=E_{231}1 
\\
{\rm (a5)} &  {[312]} & E_{312}dva+E_{213}cvb=E_{312}1 
& \quad &
{\rm (a6)} &  {[321]} & E_{321}dav+E_{231}cbv=E_{321}1 
\end{array}$$
and the other non-trivial relations are the following 
$$\begin{array}{llrlllr}
{\rm (a7)} &  {[122]} & E_{123}vab+E_{132}vba=0
& \quad &
{\rm (a8)} &  {[212]} & E_{213}avb+E_{312}bva=0 
\\
{\rm (a9)} &  {[221]} & E_{231}abv+E_{321}bav=0 
& \quad &
{\rm (a10)} &  {[133]} & E_{123}vcd+E_{132}vdc=0
\\
{\rm (a11)} &  {[313]} & E_{213}cvd+E_{312}dvc=0
& \quad &
{\rm (a12)} &  {[331]} & E_{231}cdv+E_{321}dcv=0.
\end{array}$$
The relations above together with ${\rm (V)}$ yields:
$$\begin{array}{rrllrrrll}
{\rm (a1)' :} &{\rm (1)} &  \Rightarrow  & v^*= ad+\frac{E_{132}}{E_{123}}\ bc
& \quad &
{\rm (a2)' :} & {\rm (2)} &  \Rightarrow & v^*= da+\frac{E_{123}}{E_{132}}cb \\
{\rm (a4)' :} & {\rm (4)} &  \Rightarrow & v^*=ad+\frac{E_{321}}{E_{231}}\ bc 
& \quad &
{\rm (a6)' :} & {\rm (6)} &  \Rightarrow & v^*=da+\frac{E_{231}}{E_{321}}cb \\
{\rm (a7)' :} & {\rm (7)} & \Rightarrow &  ab+\frac{E_{132}}{E_{123}}\ ba =0
& \quad &
{\rm (a9)' :} & {\rm (10)} & \Rightarrow &  ab+\frac{E_{321}}{E_{231}}\ ba=0\\
{\rm (a10)' :} & {\rm (10)} & \Rightarrow & cd+\frac{E_{132}}{E_{123}}\ dc=0
& \quad &
{\rm (a12)' :} & {\rm (12)} & \Rightarrow & cd+\frac{E_{321}}{E_{231}}\ dc=0.
\end{array}$$
So we have either
\begin{equation} \label{alpha2}
\mu:=\frac{E_{132}}{E_{123}}=\frac{E_{321}}{E_{231}}
\end{equation}
or all the following relations are satisfied: 
$$bc=cb=0, \quad ab=ba=0, \quad cd=dc=0, \quad v^*=ad.$$ 
The latter implies $b(ad)=0$ and $(ad)c=0$ and thus $b=0$ and $c=0$. In such
case the quantum group reduces to $u=v\oplus a \oplus d$, where all there
generators are commuting unitaries and $adv=1$. Then
$\G$ is isomorphic to the algebra of continuous functions on two-dimensional
torus.

So let us assume that \eqref{alpha2} holds. Then
\begin{eqnarray*}
{\rm (G) } & ad+\mu bc =& \!\!\!\! v^*=da+\frac{1}{\mu}cb  \\
{\rm (H) } & ab+\mu ba =0& ,\ \ cd+\mu dc=0.
\end{eqnarray*}

From (B), (D), (F), (G) and (H) we get 
\begin{eqnarray*}
d &=& a^* (ad)+ c^*(cd) = a^* (v^*-\mu bc ) + c^* (-\mu dc) 
= a^*v^* -\mu (a^*b+c^*d)c = a^*v^*,\\
a&=&  b^*(ba)+d^*(da)= 
b^*(-\frac{1}{\mu} ab) + d^* (v^*-\frac{1}{\mu}cb) 
=d^* v^* - \frac{1}{\mu} (b^*a+d^*c)b=d^*v^*,
\end{eqnarray*}
which implies 
\begin{equation} \label{eq_uq2_d}
 d=a^*v^*, \quad av=va.
\end{equation}

Similarly, $b = a^* (ab)+ c^*(cb) = \mu c^*v^* -\mu (a^*b+c^*d)a = \mu
c^*v^*$ and $c = b^*(bc)+d^*(dc)= \frac{1}{\mu} b^*(v^*-ad) - d^*
(\frac{1}{\mu}cd) =\frac{1}{\mu} b^*v^*$,
which yields 
\begin{equation} \label{eq_uq2_b}
 b=\mu c^*v^*= \bar{\mu} v^*c^*, \quad cv=e^{-2\I t}vc,
\end{equation}
when $\mu=|\mu|e^{\I t}$. 

Inserting $ d=a^*v^*$ and $ b=\mu c^*v^*=\bar{\mu} v^*c^*$ into (A)-(C):
\begin{eqnarray*} 
{\rm (A)} & 1=& aa^*+bb^*=aa^* + \mu c^*v^*(\mu c^*v^*)^* = aa^* +|\mu|^2 c^*c
\\
{\rm (B)} & 1=& (\bar{\mu} v^*c^*)^*\bar{\mu} v^*c^*+(a^*v^*)^*a^*v^*=|\mu|^2
cc^*+aa^* \\
{\rm (C)} & 1=& cc^*+dd^*=cc^* + a^*v^*(a^*v^*)^* = cc^* +a^*a,
\end{eqnarray*}
and comparing with (D): $1= a^*a+c^*c$, we see that $c$ is normal ($c^*c=cc^*$).
Moreover, 
\begin{eqnarray*}
{\rm (E)':} & 0= & ac^*+bd^*=ac^*+ \mu c^*v^*(a^*v^*)^* =ac^*+\mu c^*a\\
{\rm (F)':} & 0= & a^*b+c^*d=\mu a^*c^*v^*+c^*a^*v^* \quad \Rightarrow \quad 
ac+\bar{\mu} ca=0.
\end{eqnarray*}
Therefore we conclude that $a,c,v$ are the generators of $\A$ satisfying 
\begin{equation} \label{eq_r1}
\tag{R1} ac+\bar{\mu} ca=0, \quad ac^*+\mu c^*a=0, \quad a^*a+c^*c=1, \quad aa^*
+|\mu|^2 c^*c=1, \quad cc^*=c^*c
\end{equation}
and 
\begin{equation} \label{eq_r2}
\tag{R2}  vv^*=v^*v=1, \quad av=va, \quad cv=e^{-2\I t}vc.
\end{equation}

Finally, to check when such elements (with $b=\mu c^*v^*$ and $d=a^*v^*$)
satisfy both the unitarity and the twisted determinant conditions, it is enough
to look at those relations among (a1)-(a12) which were not used yet. These are
(a3), (a5), (a8) and (a11), which yield: 
$$\begin{array}{ll}
{\rm (a3):} &  1= avd+\frac{E_{312}}{E_{213}}bvc = av a^*v^*+
\frac{E_{312}}{E_{213}}\mu c^*v^*vc \quad
\Rightarrow \quad aa^*+ \frac{E_{312}}{E_{213}}\mu c^*c =1 ;
\\
{\rm (a5):} &  1= dva+\frac{E_{213}}{E_{312}}cvb = 
a^*v^*va+\frac{E_{213}}{E_{312}}\ cv\bar{\mu} v^*c^* \quad \Rightarrow \quad
a^*a+\frac{E_{213}}{E_{312}} \bar{\mu}\ cc^*=1;
\\
{\rm (a8):} &  0= avb+\frac{E_{312}}{E_{213}} bva=
\bar{\mu} avv^*c^*+\frac{E_{312}}{E_{213}}\mu c^*v^*va \quad
\Rightarrow \quad ac^*+\frac{E_{312}}{E_{213}}\frac{\mu}{\bar{\mu}} c^*a =0; 
\\
{\rm (a11):} & 0= cvd+ \frac{E_{312}}{E_{213}}dvc=
cvv^*a^*+\frac{E_{312}}{E_{213}} a^*v^*vc \quad \Rightarrow
\quad ca^*+\frac{E_{312}}{E_{213}}a^*c=0.
\end{array}$$
They are compatible with \eqref{eq_r1} and \eqref{eq_r2} if and only if 
\begin{equation} \label{alpha3}
\frac{E_{312}}{E_{213}}=\bar{\mu}=
\frac{\bar{E}_{321}}{\bar{E}_{231}}.
\end{equation}

If \eqref{alpha3} does not hold, then 
$$aa^*+ \frac{E_{312}}{E_{213}}\mu c^*c =1 = aa^* +|\mu|^2 c^*c$$
implies that $c^*c=0$ and thus $c=0$. Since, moreover, $b=\mu c^*v^*=0$, we are
back to the case of two-dimensional torus. 

So it remains to consider the case when \eqref{alpha2} and
\eqref{alpha3} are satisfied (and $E_{123}=1$). Then 
the function $E$ must be of the form
\begin{eqnarray*} 
&E_{123}=1, \; E_{132}=\mu, \; E_{213}=\alpha, & \\ 
&E_{231}=\beta, \; E_{312}=\alpha\bar{\mu}, \; E_{321}=\beta\mu &
\end{eqnarray*}
for some (non-zero) complex constants $\alpha$, $\beta$ and $\mu$. 
The C${}^*$-algebra of the related Woronowicz quantum group is generated
by $a,c,v$ satisfying relations \eqref{eq_r1} and \eqref{eq_r2}.
But then all relations in (U) and (TD) hold. 
This means that the quantum group under consideration is exactly the compact
quantum group $U_q(2)$ for $q=-\mu\in \C^*$ (see Definition \ref{def_uq2}). 

Returning to the general case, let us assume that the
matrix $u$ decomposes in such a way that $u_{kk}$ is the unique non-zero element
in the $k$-th column and the $k$th row and $r$ is the first row of the $2\times
2$-block. Then consider 
$\tilde{u}=[u_{\sigma_0(i),\sigma_0(j)}]_{i,j=1}^3$, where
$\sigma_0:(123)\mapsto (kir)$. Then $\tilde{u}$ is of the form
\eqref{eq_u_decomp_form} and, by Lemma \ref{lem_iso}, the two quantum groups are
isomorphic. Translating \eqref{eq_r1} and \eqref{eq_r2} to this general case, we
recover the general conditions for a PW-quantum group with the two-block
decomposition to be isomorphic to $U_q(2)$. 
\end{proof}

\begin{rem} \label{rem_uq2_and_morphisms}
Note that the relations {\rm (G)}, \eqref{eq_uq2_d} and \eqref{eq_uq2_b} can
be obtained from \eqref{eq_R_star}, whereas {\rm (H)} and the normality of $c$
follows from \eqref{eq_Q_row}, \eqref{eq_Q_col} and \eqref{eq_Q_mix12} after
substitution $b=\bar{\mu}v^*c^*$ and $d=v^*a^*$. Moreover, the
condition \eqref{alpha2} means that $\co{3}{2}=1$. 
\end{rem}

\begin{rem} \label{rem_2block_decomp}
When the fundamental corepresentation of a PW-quantum group
admits a two-block decomposition and the resulting quantum group is $U_q(2)$,
then two of the diagonal constants, defined by \eqref{eq_const_diagonal},
are equal. For the case with 1-dimensional block for $k=1$ we get $p_2=p_3$,
i.e. 
\begin{equation*} 
\widebar{E_{213}}E_{132}+\widebar{E_{231}}E_{312}=\widebar{E_{312}}E_{
123}+\widebar { E_ { 321}}E_{ 213}.
\end{equation*}
Conversely, if we know that $p_1\neq p_2$ and $p_2=p_3$, then the
fundamental corepresentation of a PW-quantum group admits a two-block
decomposition and Theorem \ref{thm_uq2_or_t2} describes all possible PW-quantum
groups.  
\end{rem}

\section{Classification result}
\label{sec_class}
We devote this Section to the proof of the Classification Theorem. 

Theorem \ref{thm_comm_rel} reveals that commutation relations among
generators depend on the characteristic constants $\co{n}{l}$ ($n\neq l$). On
the other hand, Equation \eqref{eq_char_const_rel} suggests that one of the
following cases must occur:
\begin{enumerate}
\item[{\bf (1)}] none of the constants equals 1; this case will be considered in
Section \ref{sec_none}. 
\item[{\bf (2)}] exactly two of the constants equal 1, in Section
 \ref{sec_some}. 
\item[{\bf (3)}] all the six constants equal 1, in Section \ref{sec_special}. 
\end{enumerate}

\subsection{Case (1): all characteristic constants different from 1.}
\label{sec_none}

\begin{prop}\label{prop_none_kac_cases}
 Let $\G_a$ be a PW-quantum group such that $\co{j}{n}\neq 1$ for any $j\neq
n$, and let $M_n= \sum_{jk} |E_{njk}|^2$ $(n=1,2,3)$ be the modular constants as
in \eqref{eq_const_modular}. Then exactly one of the following happens:
 \begin{enumerate}
  \item$\G_a \cong A_{p,k,m}(3)$ if $E$ is as in Definition
\ref{def_akm},
  \item $\G_a \cong U_q(2)$ $($when $M_j=M_l \neq M_n$,
$\frac{E_{nlj}}{E_{njl}} =\frac{E_{ljn}}{E_{jln}}$ and $\frac{E_{nlj}}{E_{nlj}}
= \frac{\bar{E}_{lnj}}{\bar{E}_{jnj}})$,
  \item $\G_a \cong C(\T^2)$ $($otherwise$)$.
 \end{enumerate}
\end{prop}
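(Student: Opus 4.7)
The plan is to split the argument according to the modular constants $M_1, M_2, M_3$ from \eqref{eq_const_modular}, using the fact that under the hypothesis $\co{j}{n}\neq 1$ both Theorem \ref{thm_pi} and Lemma \ref{lem_akm_relations} apply: each $u_{ij}$ is a normal partial isometry, and the projections $P_{kj} := u_{kj}u_{kj}^*$ satisfy
\begin{equation*}
\sum_j P_{kj} = 1, \qquad \sum_j M_j P_{kj} = M_k, \qquad \sum_j M_j^{-1} P_{kj} = M_k^{-1},
\end{equation*}
the first coming from (U), the latter two from Theorem \ref{thm_modular}. The first identity forces $\{P_{kj}\}_j$ to be pairwise orthogonal, since three projections summing to $1$ are automatically mutually orthogonal.

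If the $M_j$'s are pairwise distinct, then $\sum_j (M_j - M_k)P_{kj} = 0$ multiplied on the right by each $P_{kj'}$ yields $(M_{j'}-M_k)P_{kj'}=0$, hence $P_{kj}=\delta_{kj}$. Thus $u_{kj}=0$ whenever $k\neq j$, the corepresentation $u$ is diagonal with unitary entries, $\A$ is commutative, and (TD) reduces to a single multiplicative relation identifying $\G_a\cong C(\T^2)$. If exactly two modular constants coincide, say $M_j=M_l\neq M_n$, the same projection argument forces $P_{kn}=0=P_{nk}$ for $k\in\{j,l\}$, giving a $1+2$-block decomposition of $u$. Theorem \ref{thm_uq2_or_t2} then identifies $\G_a$ as $U_q(2)$ precisely under the two ratio conditions stated in item (2) of the proposition (which are the translation, through the permutation of indices aligning the block with \eqref{eq_u_decomp_form}, of the conditions in Theorem \ref{thm_uq2_or_t2}), and as $C(\T^2)$ otherwise.

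The substantive case is the Kac-type setting $M_1=M_2=M_3$, where we must exhibit $E$ as equivalent (via Lemma \ref{lem_iso}) to an array from Definition \ref{def_akm}. Parametrising $E$ by the five constants $p,q,r,s,t$ of Remark \ref{rem_notation}, the Kac identities give two independent moduli relations among these constants. Combining them with $\co{1}{2}\co{2}{3}\co{3}{1}=1$ from \eqref{eq_char_const_rel} and exploiting \eqref{eq_R_star}--\eqref{eq_R_star_adjoint} together with the commutation relations in Section \ref{sec_Q}, I would deduce that each characteristic constant is a non-trivial cube root of unity. Once this is in hand, absorbing residual phases by relabelling indices through Lemma \ref{lem_iso} and rescaling generators brings $E$ into the canonical form of Definition \ref{def_akm}, identifying the parameters $p\in\C^*$ and $k,m\in\{0,1,2\}$.

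The hardest step will be this reduction to cube roots of unity: the identity $\co{1}{2}\co{2}{3}\co{3}{1}=1$ alone leaves a two-complex-parameter family of possibilities, and collapsing it to the finite set $\{(a,b,c)\in (\C^*)^3 : a^3=b^3=c^3=1,\ abc=1\}$ requires carefully extracting additional algebraic constraints from the Kac-type moduli equalities together with the full list of relations from Section \ref{sec_Q}. Once the finite structure emerges, identifying the parameters with Definition \ref{def_akm} is largely bookkeeping via Lemma \ref{lem_iso}, and exhaustiveness of the three cases follows because every admissible $E$ produces $M_j$'s of exactly one of the three multiplicity patterns handled above.
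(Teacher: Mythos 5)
Your treatment of the first two multiplicity patterns of the modular constants is sound and essentially the paper's own argument: the paper likewise multiplies the relations of Theorem \ref{thm_modular} against the unitarity relation, uses that each $u_{ij}$ is a normal partial isometry with mutually orthogonal ranges along rows and columns, and concludes $M_r u_{rb}=M_b u_{rb}$, so that distinct modular constants kill off-diagonal entries and two coinciding constants give the $1+2$ block handled by Theorem \ref{thm_uq2_or_t2}. (One of your two projection identities sums over the wrong index relative to \eqref{eq_modular_prop}, but both the row-sum and column-sum versions are available once normality is invoked, so this is cosmetic.)

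The genuine gap is in the Kac case $M_1=M_2=M_3$, which is where all the content of the proposition lives. You set out to prove that \emph{every} such array $E$ is equivalent, via Lemma \ref{lem_iso}, to an array of Definition \ref{def_akm} — but that is not what the proposition asserts, and it is not true: item (3) of the statement must also absorb Kac-type arrays with all $\co{j}{n}\neq 1$ that are \emph{not} of the $A_{p,k,m}$ form, and for those the conclusion is $\G_a\cong C(\T^2)$, not $A_{p,k,m}(3)$. Your closing claim that ``exhaustiveness of the three cases follows because every admissible $E$ produces $M_j$'s of exactly one of the three multiplicity patterns'' conflates the trichotomy of the proposition with the multiplicity pattern of the $M_j$'s; the $C(\T^2)$ alternative is not confined to the non-Kac patterns. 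Moreover, the reduction you describe (forcing the characteristic constants to be non-trivial cube roots of unity from the Kac moduli relations) is exactly the step you flag as hardest and do not carry out, and no amount of index relabelling will make it succeed for arrays outside the $A_{p,k,m}$ family. The paper avoids this entirely: in the Kac case it invokes Lemma \ref{lem_akm_invariance} to show that under any irreducible representation each $\pi(u_{ij})$ is $0$ or unitary, so $\pi$ is supported on a permutation $\sigma$ subject to the compatibility condition \eqref{eq_akm_perm_cond}; transpositions are excluded because $\co{r}{k}\neq 1$; the two nontrivial cyclic permutations are admissible precisely when $E$ is of the Definition \ref{def_akm} form, and otherwise only $\sigma=\id$ survives, making every irreducible representation a direct sum of commuting unitaries and hence $\A$ commutative, i.e.\ $\G_a\cong C(\T^2)$. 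You would need to replace your algebraic reduction by this representation-theoretic dichotomy (or supply an equally complete substitute) for the proof to close.
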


\begin{proof}
With the assumption $\co{j}{n}\neq 1$ for any $j\neq n$, we use Lemma
\ref{lem_akm_relations} to deduce that the generators of the quantum group
$\G_a$ satisfy \eqref{eq_s3_col}-\eqref{eq_s3_mix}. In particular, every
$u_{ij}$ is a normal partial isometry (cf.\ Theorem \ref{thm_pi}). Moreover, by
\eqref{eq_modular_prop}, we know that
$$\sum_{s=1}^3 M_s u_{sa}u^*_{sb} =\delta_{ab} M_a \one, \quad 
\sum_{s} \frac{1}{M_s} u^*_{as}u_{bs}=\frac{1}{M_a}\delta_{ab} \one. $$
Multiplying both sides of the first equation (with $a=b$) by $u_{rb}$ and using
$u^*_{sb}u_{rb} =0$ for $r\neq s$ and $u_{rb} u^*_{rb}u_{rb} = u_{rb}$, we find
out that
\begin{equation*}
 M_{r}u_{rb} = M_{b} u_{rb}.
\end{equation*}
So for $r\neq b$ either $M_r=M_b$ or $u_{rb}=0$. 

When the constants $M_1$, $M_2$ and $M_3$ are pairwise different, then only
diagonal terms of the matrix $u$ can be non-zero, i.e.\ $u=u_{11}\oplus
u_{22}\oplus u_{33}$ and $\G_a \cong \T^2$.

When exactly two of $N$'s are equal, then the fundamental
corepresentation $u$ decomposes into $u=v\oplus w$, where $w$ is the 2 by 2
matrix, $v=u_{nn}$. Due to Theorem \ref{thm_uq2_or_t2}, $\G_a$ is then
isomorphic to $U_q(2)$ iff $\frac{E_{nlj}}{E_{njl}} = \frac{E_{ljn}}{E_{jln}}$
and $\frac{E_{nlj}}{E_{nlj}} = \frac{\bar{E}_{lnj}}{\bar{E}_{jnj}}$. 
Otherwise again, $u=u_{11}\oplus u_{22}\oplus u_{33}$ and $\G_a \cong \T^2$. 

When all the three constants $M_1$, $M_2$ and $M_3$ are equal (and necessarily
non-zero), we get:
$$ \sum_{r=1}^3 u_{rn} u^*_{rn} = 1, \quad 
 \sum_{n=1}^3 u^*_{rn} u_{rn} = 1.$$
These relations imply that $u^t$ is unitary and thus $\G_a$ is of Kac type.
Therefore we can apply Lemma \ref{lem_akm_invariance} to ensure that whenever
$\pi$ is an irreducible representation of $\G_a$ on a Hilbert space $H$, then
$K_{ij}=\pi(u_{ij})(H)$ must be either $\{0\}$ or $H$. Following the proof of
Theorem \ref{thm_akm_reps}, one can show that $\pi$ must be of the form 
$$ \pi(u_{1,\sigma(1)})=V_1, \quad \pi(u_{2,\sigma(2)})=V_2, \quad
\pi(u_{3,\sigma(3)})=V_3$$
for some permutation $\sigma\in S_3$, and $\pi(u_{ij})=0$ otherwise.
Moreover, the permutation $\sigma$ has to fullfil the condition
\eqref{eq_akm_perm_cond}. Evidently, it cannot be a transposition exchanging
$r$ with $k$ due to $\co{r}{k}\neq 1$. When the array $E$ is as in Definition
\ref{def_akm}, then the three permutation: $\id$, $(231)$ and $(312)$ are
allowed. This leads to the quantum group $A_{p,k,m}(3)$. If the array is not of
that form, only $\sigma=\id$ is permitted. Then the fundamental corepresentation
$u$ decomposes under any irreducible representation $\pi$ into a direct sum
$\pi(u) = V_1\oplus V_2\oplus V_3$ of commuting unitaries. In the same way as in
Theorem \ref{thm_akm_reps}, we conclude that $\G_a \cong C(\T^2)$.
\end{proof}

\subsection{Case (2): two characteristic constants equal 1.} 
\label{sec_some}

The next step is to consider the situation when exactly two of the constants
equals 1, i.e.\ $\co{l}{n}=\co{n}{l}=1$ for some $n\neq l$. In view of Lemma
\ref{lem_iso}, without loose of generality we can assume that $\co{2}{3}=1$,
$\co{1}{3}\neq 1$.  The next three results describe this situation in a more 
detailed way. Namely, they first reveal additional relations between the
generators and, next, their influence on the $*$-representations of the algebra
of such a quantum group. This exhibits how such a quantum group 
looks like. 

\begin{lemma} \label{lem_supp_cond_12_21}
 Let $\G_b$ be a PW-quantum group related to an array $E$ which satisfies
$\co{2}{3}=1$ and $\co{1}{3}\neq 1$. Then  
\begin{eqnarray} 
\label{eq_some_row_col_0} & & 
u_{1a} u_{na}= u_{na} u_{1a} =0, \; u_{an} u_{a1}= u_{a1}u_{an}=0, \quad n\neq
1, \; a=1,2,3,
\\ \label{eq_some_with_star} & & 
u_{1n}u_{1l}^* =0, \quad u_{1n}^*u_{1l} =0, \quad n\neq l,
\\ \label{eq_some_antidiag} & & 
u_{21}u_{12}=0=u_{12}u_{21}, \quad u_{31}u_{13}=0=u_{13}u_{31}. 
\end{eqnarray}
Moreover, $u_{1n}$ and $u_{n1}$ are partial isometries, and $u_{11}$ is normal.
\end{lemma}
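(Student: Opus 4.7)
The plan is to read off the conclusions from the morphism identities of Sections~\ref{sec_morphisms} and~\ref{sec_Q}, and to use Theorem~\ref{thm_pi} once the right row--column vanishings are in place. First I would extract the full list of characteristic constants from the two hypotheses. The cocycle relation $\co{n}{l}\co{l}{j}=\co{n}{j}$ of \eqref{eq_char_const_rel} combined with $\co{2}{3}=1$ forces $\co{3}{2}=1$, and then $\co{1}{2}=\co{1}{3}\co{3}{2}=\co{1}{3}\neq 1$, $\co{2}{1}=\co{2}{3}\co{3}{1}=\co{3}{1}=\co{1}{3}^{-1}\neq 1$. Hence the four pairs $(l,n)$ with $\co{l}{n}\neq 1$ are exactly those involving the index~1, and plugging each of them into \eqref{eq_comm_rel_row0} of Theorem~\ref{thm_comm_rel}(a) produces \eqref{eq_some_row_col_0} verbatim.

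Next I would attack the antidiagonal vanishing \eqref{eq_some_antidiag}. At $(r,n,k,l)=(1,2,2,1)$ one has $\co{k}{r}\co{l}{n}=\co{2}{1}\co{1}{2}=1$, so the $u_{kn}u_{rl}$-term of \eqref{eq_comm_rel_not1} drops out and one gets $u_{12}u_{21}=\alpha\,u_{21}u_{12}$ for a computable non-zero $\alpha$. The very same indices inserted in \eqref{eq_QA_mix2} produce a zero right-hand side and a second linear relation between $u_{12}u_{21}$ and $u_{21}u_{12}$ whose coefficients are built from $E_{312}$ and $E_{321}$. Eliminating $u_{12}u_{21}$ from the pair reduces everything to a scalar multiple of $u_{21}u_{12}$ equal to zero; in the generic case the scalar is non-zero and delivers $u_{21}u_{12}=0$, and $u_{12}u_{21}=0$ follows through $\alpha$. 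The relation $u_{31}u_{13}=u_{13}u_{31}=0$ is proved symmetrically at $(r,n,k,l)=(1,3,3,1)$. In the degenerate sub-case where the separating coefficient happens to vanish one would invoke a second instance of \eqref{eq_QA_mix2}, say at $(r,n,k,l)=(3,1,1,3)$, and use the row-column vanishings of \eqref{eq_some_row_col_0} to cut the calculation short.

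For the cross-star relations \eqref{eq_some_with_star} I would first deduce that $u_{11}$ is a normal partial isometry: the hypotheses of Theorem~\ref{thm_pi} with $R=J=1$ are precisely the $a=1$ and $r=1$ instances of \eqref{eq_some_row_col_0}, so the theorem applies. Each identity $u_{1n}u_{1l}^*=0$ is then obtained by expanding the adjoint via \eqref{eq_R_star_adjoint} in a judiciously chosen form. For example, writing $u_{13}^*$ in the form containing the monomials $u_{32}u_{21}$ and $u_{22}u_{31}$ and multiplying by $u_{12}$ on the left produces only products of the shape $u_{12}u_{a2}$, each killed by \eqref{eq_some_row_col_0}; hence $u_{12}u_{13}^*=0$. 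The companion identities $u_{11}u_{1l}^*=0$, $u_{1n}^*u_{1l}=0$, and their adjoints follow by analogous choices of expansion in \eqref{eq_R_star} and \eqref{eq_R_star_adjoint}, now combined with \eqref{eq_some_row_col_0} and, where necessary, \eqref{eq_some_antidiag} to annihilate any residual $u_{i1}u_{1j}$ factor.

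Finally, the partial-isometry claim for $u_{1n}$ drops out of row-one unitarity $u_{11}u_{11}^*+u_{12}u_{12}^*+u_{13}u_{13}^*=\one$ multiplied by $u_{1n}$ on the right: the two cross terms vanish thanks to $u_{1l}^*u_{1n}=0$ from \eqref{eq_some_with_star}, leaving $u_{1n}u_{1n}^*u_{1n}=u_{1n}$. The analogous argument, starting from the column-one unitarity $\sum_s u_{s1}^*u_{s1}=\one$ and multiplying by $u_{n1}$ on the left, delivers the partial-isometry property of $u_{n1}$; the inputs $u_{n1}u_{s1}^*=0$ for $s\neq n$ are obtained from \eqref{eq_R_star_adjoint} by the same book-keeping as in the previous step, using both \eqref{eq_some_row_col_0} and \eqref{eq_some_antidiag}. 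The hard part of the whole argument will be the antidiagonal vanishing \eqref{eq_some_antidiag}: the two commutation formulas \eqref{eq_comm_rel_not1} and \eqref{eq_QA_mix2} are formally consistent but nearly degenerate at these indices, and some care is needed to rule out the exceptional choice of parameters where the separating coefficient collapses.
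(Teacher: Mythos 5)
Your treatment of \eqref{eq_some_row_col_0} (via the cocycle identities \eqref{eq_char_const_rel} and \eqref{eq_comm_rel_row0}), your appeal to Theorem~\ref{thm_pi} with $R=J=1$, and your derivation of the partial-isometry claims from unitarity plus the star-vanishings all match the paper's argument in substance. The genuine gap is exactly where you predicted the difficulty would be: the antidiagonal relations \eqref{eq_some_antidiag}. The two identities you propose to play off against each other are not independent. Relation \eqref{eq_comm_rel_not1} \emph{is} relation \eqref{eq_QA_mix2} solved for $u_{rn}u_{kl}$ (divide \eqref{eq_QA_mix2} by the nonzero coefficient $\frac{E_{jnl}}{E_{jln}}(1-\co{l}{n})$ and you get \eqref{eq_comm_rel_not1} verbatim), so evaluating both at $(r,n,k,l)=(1,2,2,1)$ and ``eliminating'' yields $0=0$. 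Worse, the only other $Q$-type instance available, namely the same relation at the swapped indices $(2,1,1,2)$, gives $u_{21}u_{12}=\alpha'\,u_{12}u_{21}$ with $\alpha\alpha'=\co{1}{2}\co{2}{1}\,(E_{312}/E_{321})^2(E_{321}/E_{312})^2=1$, so the two scalar relations are mutually consistent and never force vanishing. The holomorphic $Q$-relations alone cannot prove \eqref{eq_some_antidiag}.

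The paper's second, genuinely independent input comes from comparing the two expansions of $u_{ii}^*$ given by \eqref{eq_R_star} and \eqref{eq_R_star_adjoint} (i.e.\ the intertwiners $R$ and $R^*$), which produces $u_{rk}u_{kr}=\frac{E_{irk}}{E_{ikr}}\frac{\bar{E}_{ikr}}{\bar{E}_{irk}}u_{kr}u_{rk}$ --- a relation involving the \emph{conjugates} $\bar{E}$ and hence not a consequence of \eqref{eq_QA_mix2}. (If you insist on staying with Section~\ref{sec_Q}, the antiholomorphic information is also present in \eqref{eq_Q_mix12}, but not in the pair you chose.) Combining this with \eqref{eq_comm_rel_not1} gives $\bigl(1+\frac{E_{rki}}{E_{kri}}\frac{\bar{E}_{irk}}{\bar{E}_{ikr}}\bigr)u_{kr}u_{rk}=0$, and one must still exclude the exceptional locus where that scalar vanishes; the paper does this by a separate computation using the additional constraint $p_1=p_2=p_3$ (legitimate in the context where the lemma is applied, after the two-block case has been disposed of), showing the supplementary condition \eqref{eq_some_suppl_rk} forces $r(1+|p|^2)=0$. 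Your fallback for the ``degenerate sub-case'' --- yet another instance of \eqref{eq_QA_mix2} --- again draws only on the holomorphic relations and would not close this case. Since \eqref{eq_some_antidiag} is subsequently used in your proof of \eqref{eq_some_with_star} and of the partial-isometry property of $u_{n1}$, this gap propagates.
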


\begin{proof}
When $\co{1}{3}\neq 1$ and $\co{2}{3}=1$, then $\co{3}{1}=\frac1{\co{1}{3}}\neq
1$ and
$\co{1}{2}=\co{1}{3}\co{3}{2}=\frac{\co{1}{3}}{\co{2}{3}}=\co{1}{3}\neq 1$.
Thus, by \eqref{eq_comm_rel_row0}, the product of elements in the same row (in
the same column, respectively) with one of them being in the first row (resp.\
column) vanishes, so that Equation \eqref{eq_some_row_col_0} holds. 

Due to \eqref{eq_some_row_col_0} and Theorem \ref{thm_pi}, we conclude that
$u_{11}$ is a normal partial isometry. In fact, repeating the same reasoning
as at the beginning of the proof of Theorem \ref{thm_pi}, we can show that 
\begin{equation*} 
u_{1k}u_{1r}^* =0, \quad u_{1k}^*u_{1r} =0 \quad \mbox{for any} \; k\neq r.
\end{equation*}
and that the generators $u_{1n}$ $(n=1,2,3)$ are partial isometries. Application
of the antipode shows that $u_{r1}$ $(r=1,2,3)$ are partial isometries too. 


To see that \eqref{eq_some_antidiag} holds, let us fix $r=l\in \{2,3\}$ and
$n=k=1$. Then $\co{l}{n} \neq 1$ and thus
\eqref{eq_comm_rel_not1} describes the
relation between $u_{rk}u_{kr}$ and $u_{kr}u_{rk}$. More precisely, we have
$$u_{rk} u_{kr} = \frac{E_{irk}}{E_{ikr}}\frac{E_{irk}}{E_{ikr}} 
\frac{1 -\co{k}{r}}{1- \co{r}{k} }\,  u_{kr} u_{rk}
+ \frac{E_{irk}}{E_{ikr}} \frac{1-\co{r}{k}\co{k}{r}}{1- \co{r}{k}}\, 
u_{kk} u_{rk} = -\co{k}{r} \left(\frac{E_{irk}}{E_{ikr}}\right)^2 u_{kr} u_{rk}.
$$
On the other hand, we can compare two different ways of expressing $u_{ii}^*$
$$ \frac{E_{irk}}{E_{irk}} u_{rr} u_{kk}+ \frac{E_{ikr}}{E_{irk}}
u_{rk}u_{kr} = u^*_{ii} = 
\frac{\bar{E}_{irk}}{\bar{E}_{irk}} u_{rr} u_{kk}+
\frac{\bar{E}_{ikr}}{\bar{E}_{irk}} u_{kr}u_{rk}, $$
(cf.\ Equations \eqref{eq_R_star}, \eqref{eq_R_star_adjoint}) and then 
observe that 
$$  u_{rk}u_{kr} = \frac{E_{irk}}{E_{ikr}}
\frac{\bar{E}_{ikr}}{\bar{E}_{irk}} u_{kr}u_{rk}. $$
Together with the previous relation this yields
$$ u_{kr} u_{rk} = 
-\frac{E_{rki}}{E_{kri}} \frac{\bar{E}_{irk}} {\bar{E}_{ikr}}u_{kr}u_{rk},  $$
from which we deduce that (for $k=1$)
\begin{equation} \label{eq_some_antidiag_condition}
\frac{E_{rki}}{E_{kri}} \frac{\bar{E}_{irk}}
{\bar{E}_{ikr}}\neq -1 \quad \mbox{implies} \quad u_{kr} u_{rk} =
0=u_{rk}u_{kr}. 
\end{equation}

Now it is a matter of algebraic computation to see that $\frac{E_{rki}}{E_{kri}}
\frac{\bar{E}_{irk}} {\bar{E}_{ikr}}\neq -1$ for $r\in \{2,3\}$ and $k=1$.
Indeed, let us assume that the array $E$ satisfies:
\begin{equation} \label{eq_set_of_cond}
\co{2}{3}=1, \quad \alpha:= \co{1}{3} \neq 1, \quad p_1=p_2=p_3
\end{equation}
and the supplementary condition for $r\in \{2,3\}$ and $k=1$ 
\begin{equation} \label{eq_some_suppl_rk}
\frac{E_{rki}}{E_{kri}} \frac{\bar{E}_{irk}} {\bar{E}_{ikr}}=-1. 
\end{equation}
Then, with the notation from Remark \ref{rem_notation}, the relations of
\eqref{eq_set_of_cond} can be written as 
$$ t=pr, \quad s=\frac{\alpha p r}{q}, \quad (\#) \ \left\{ \begin{array}{l}
r+ \bar{p}t = p\bar{q}+\bar{r}s \\
r+ \bar{p}t = \bar{s} + q\bar{t}
\end{array} \right. .$$

Let us first assume that \eqref{eq_some_suppl_rk} holds for $r=2$, i.e.\ 
$s=-p\bar{q}r$. Replacing $s$ and $t$ in the second relation of $(\#)$ we get
$r(1+ |p|^2) = -\bar{p}q\bar{r} + q\bar{p}\bar{r} =0,$
contrary to our assumption $r\neq 0$. Thus
\eqref{eq_some_suppl_rk} cannot hold for $r=2$. 

Similarly, if we assume that \eqref{eq_some_suppl_rk} holds for $r=3$,
then $s = -\frac{p\bar{q}}{\bar{r}}$ and the first relation in $(\#)$ yields
$r(1+ |p|^2) = p\bar{q}+\bar{r}s = 0, $
which again contradicts the fact that $r\neq 0$. Hence \eqref{eq_some_suppl_rk}
cannot hold and \eqref{eq_some_antidiag_condition} forces
\eqref{eq_some_antidiag} to hold.
\end{proof}


\begin{lemma} \label{lem_11_inv} 
Let $\G_b$ be a PW-quantum group related to an array $E$ which satisfies
$\co{2}{3}=1$ and $\co{1}{3}\neq 1$ and let $\pi$ be a $*$-representation of
$\G_b$ on a Hilbert space $H$. Then 
\begin{enumerate}
 \item the subspace $\pi(u_{11})(H)$ is $\pi$-invariant. 
 \item 
 if $\pi(u_{11})=0$ then the subspace $\pi(u_{12}u_{12}^*)$ is $\pi$-invariant.
 \end{enumerate}
\end{lemma}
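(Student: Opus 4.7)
Both assertions amount to checking that the range $K\subseteq H$ of a concrete orthogonal projection $\pi(e)$, with $e=e^*=e^2\in\A$, is $\pi$-stable; equivalently, one wants $\pi((1-e)u_{ij}e)=0$ for every generator $u_{ij}$. The strategy is therefore a case-by-case verification, using the vanishing products collected in Lemma \ref{lem_supp_cond_12_21}, the commutation relations from Theorem \ref{thm_comm_rel}, and the commutations with $u_{11}^*$ furnished by Lemma \ref{lem_ij_rn_star}.

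For part (1), Lemma \ref{lem_supp_cond_12_21} identifies $u_{11}$ as a normal partial isometry, so $\pi(u_{11})(H)$ coincides with the range of $P_1:=\pi(u_{11}u_{11}^*)$. I plan to show that $u_{ij}u_{11}$ and $u_{ij}^*u_{11}$ both belong to $u_{11}\cdot\A$ for every pair $(i,j)$. When exactly one index equals $1$, the relations \eqref{eq_some_row_col_0}--\eqref{eq_some_antidiag} yield outright vanishing. When $i,j\in\{2,3\}$ the hypothesis $\co{2}{3}=1$ puts us in case (b) of Theorem \ref{thm_comm_rel}, so \eqref{eq_comm_rel_1} gives $u_{ij}u_{11}=A\,u_{11}u_{ij}+B\,u_{k1}u_{1l}$; multiplying on the right by $u_{11}^*$ kills the second summand via the identity $u_{k1}u_{11}^*=0$ for $k\neq 1$, which follows exactly as in Theorem \ref{thm_pi} by expanding $u_{11}^*$ through \eqref{eq_R_star} and invoking \eqref{eq_some_row_col_0}. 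Since $u_{11}$ is normal, $u_{11}u_{11}^*=u_{11}^*u_{11}$, so this absorption does not alter the image of $P_1$. For the adjoints $u_{ij}^*$ with $i,j\in\{2,3\}$, the assumption $\co{2}{3}=\co{3}{2}=1$ ensures that the characteristic-constant conditions of Lemma \ref{lem_ij_rn_star} hold for $u_{ij}u_{11}^*$, producing $u_{ij}u_{11}^*=A\,u_{11}^*u_{ij}$ with $A\neq 0$; passing to the adjoint gives $u_{ij}^*u_{11}=\bar A^{-1}u_{11}u_{ij}^*\in u_{11}\cdot\A$.

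For part (2), $u_{12}$ is a partial isometry by Lemma \ref{lem_supp_cond_12_21}, so $Q:=\pi(u_{12}u_{12}^*)$ is the projection onto $\pi(u_{12})(H)$. Under $\pi(u_{11})=0$ the unitarity condition \eqref{U} for the first row reduces to $Q+\pi(u_{13}u_{13}^*)=I$, so invariance of $Q(H)$ is equivalent to $\pi(u_{13}^*\,u_{ij}\,u_{12})=0$ for every generator $u_{ij}$. I would work through the nine cases: when $i=1$ or $j=1$ the conclusion is immediate from Lemma \ref{lem_supp_cond_12_21} (with $\pi(u_{11})=0$ eliminating the residual $u_{11}u_{32}$ summand appearing in the commutation \eqref{eq_comm_rel_not1} of $u_{31}u_{12}$); for $u_{ij}=u_{13}$ one uses $u_{13}u_{12}=-p\,u_{12}u_{13}$ from \eqref{eq_comm_rel_row1} together with $u_{13}^*u_{12}=0$. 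For the four inner generators $u_{22},u_{23},u_{32},u_{33}$, the commutation \eqref{eq_comm_rel_1} writes $u_{ij}u_{12}=\alpha\,u_{12}u_{ij}+\beta\,u_{1l}u_{kj}$; the first summand dies after pre-multiplication by $u_{13}^*$ via $u_{13}^*u_{12}=0$, while the cross term is reduced, using the modular identity \eqref{eq_modular_prop} that expresses $\pi(u_{13}^*u_{13})$ in terms of $\pi(u_{12}^*u_{12})$ together with the zero-products $u_{12}u_{22}=u_{12}u_{32}=0$, to an expression forced into $\ker\pi$ by the supplementary triple-product identities obtained from \eqref{TD} evaluated on non-permutation indices (such as $(1,1,3)$ and $(1,3,1)$) under the assumption $\pi(u_{11})=0$.

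The main obstacle is precisely this last step: the inner generators $u_{23}$ and $u_{33}$ produce cross terms $u_{13}u_{22}$, $u_{13}u_{32}$ whose $\pi$-images lie a priori in $\pi(u_{13})(H)=(I-Q)(H)$, that is, in the \emph{orthogonal complement} of $Q(H)$; it is only by invoking the finer identities coming from the non-permutation cases of \eqref{TD}, combined with $\co{2}{3}=1$, that one can force these cross terms into $\ker\pi$ and close the argument.
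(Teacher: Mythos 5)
Your part (1) follows the paper's own route and is essentially sound: reduce to the projection $\pi(u_{11}u_{11}^*)$ via normality, kill the mixed cases with \eqref{eq_some_row_col_0}--\eqref{eq_some_with_star}, and commute $u_{ij}^{\epsilon}$ past $u_{11}u_{11}^*$ for $i,j\in\{2,3\}$ using Corollary \ref{cor_comm_rel} and Lemma \ref{lem_ij_rn_star}. (One slip: for $u_{ij}u_{11}$ the relevant constant is $\co{1}{j}=\co{1}{3}\neq 1$, so you are in case (a) of Theorem \ref{thm_comm_rel}, not case (b), and the cross term is $u_{1j}u_{i1}$; this is harmless since the two-term shape and the vanishing of the cross term after multiplication by $u_{11}^*$ survive.)

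The genuine gap is in part (2), and you flag it yourself as ``the main obstacle'': the cross terms coming from the inner generators are left to unspecified ``supplementary triple-product identities'' from non-permutation instances of \eqref{TD}. That route is not needed and, as sketched, does not close. The missing observation is that $\pi(u_{11})=0$ forces $\pi(u_{22})=\pi(u_{33})=0$: by \eqref{eq_R_star},
\begin{equation*}
\pi(u_{22})^*=\tfrac{E_{213}}{E_{213}}\,\pi(u_{11})\pi(u_{33})+\tfrac{E_{231}}{E_{213}}\,\pi(u_{13})\pi(u_{31}),\qquad
\pi(u_{33})^*=\tfrac{E_{312}}{E_{312}}\,\pi(u_{11})\pi(u_{22})+\tfrac{E_{321}}{E_{312}}\,\pi(u_{12})\pi(u_{21}),
\end{equation*}
and both right-hand sides vanish (the first summands because $\pi(u_{11})=0$, the second by \eqref{eq_some_antidiag}). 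So of your ``four inner generators'' only $u_{23}$ and $u_{32}$ survive, and for these the correct tool is again \eqref{eq_R_star} rather than the two-term commutation of Theorem \ref{thm_comm_rel}: expanding $u_{31}^*$ (resp.\ $u_{23}^*$) in two ways, each expansion loses one summand under $\pi$ because it contains $U_{22}$, $U_{33}$ or a product already known to vanish, which yields clean one-term commutations such as $U_{23}U_{12}=\tfrac{E_{321}}{E_{312}}\tfrac{E_{123}}{E_{132}}U_{12}U_{23}$ and $U_{31}U_{12}=\text{const}\cdot U_{12}U_{31}$ — no cross term ever appears. Finally, your reduction to $\pi(u_{13}^*u_{ij}u_{12})=0$ only treats the nine generators; invariance under a $*$-representation also requires the adjoints, and the relations $U_{ij}^*U_{12}=\text{const}\cdot U_{12}U_{ij}^*$ (including $U_{12}^*U_{12}=\text{const}\cdot U_{12}U_{12}^*$) must be verified separately, as the paper does.
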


\begin{proof}
We already know that $u_{11}$ is a normal partial isometry, so
$K:=\pi(u_{11})(H)=\pi(u_{11}u_{11}^*)(H)$. 
Moreover, when $r\neq 1$ and $n\neq 1$, and $k$ and $l$ are the unique integers
such that $(1,r,k), (1,n,j)\in S_3$, then $\co{r}{k}=\co{j}{n}= 1$. Hence 
we can apply Lemma \ref{lem_ij_rn_star}, to show that 
$u_{rn}u_{11}^* = A_{r,k}^{n,j} u_{11}^* u_{rn},$
where $A_{r,k}^{n,j}\neq 0$. 
The application of the relations between generators and their adjoints
(Corollary \ref{cor_comm_rel} and Lemma \ref{lem_supp_cond_12_21}) yields 
 $$u_{rn}(u_{11}u_{11}^*) = \left\{ 
 \begin{array}{ll}
(u_{11}u_{11}^*)u_{11} & \mbox{if } r=n=1  \quad (\mbox{by normality}),\\
 u_{1n}u_{11}u_{11}^* =0 = (u_{11}u_{11}^*)u_{1n} & \mbox{if } r=1, n\neq 1, \\
 u_{r1}u_{11}u_{11}^* =0 = (u_{11}u_{11}^*)u_{r1} &  \mbox{if } r\neq 1, n=1,\\
u_{rn}u_{11}^*u_{11}= 
A_{r,k}^{n,j}A_{r,1}^{n,1}(u_{11}u_{11}^*)u_{rn},
 &\mbox{if } r\neq 1,n\neq 1,
 \end{array} \right.$$
and
$$u_{rn}^*(u_{11}u_{11}^*) = \left\{ 
 \begin{array}{ll}
(u_{11}u_{11}^*)u_{11}^* & \mbox{if } r=n=1,\\
u_{1n}^* u_{11}u_{11}^*=0 =
(u_{11}^*u_{11})u_{1n}^* = (u_{11}u_{11}^*)u_{1n}^*
 & \mbox{if } r=1, n\neq 1,\\
u_{r1}^*(u_{11}u_{11}^*)=0 = (u_{11}u_{11}^*)u_{r1}^* &  \mbox{if } r\neq 1,
n=1,\\
u_{rn}^*u_{11}u_{11}^* 
= (\overline{A_{r,k}^{n,j}})^{-1} \overline{A_{1,r}^{1,n}}
(u_{11}u_{11}^*) u_{rn}^*
 &\mbox{if } r\neq 1,n\neq 1. 
 \end{array} \right.$$
Thus whenever $x\in \pi(u_{11}u_{11}^*)(H)$, then $\pi(u_{rn})x, \pi(u_{rn}^*)x
\in \pi(u_{11}u_{11}^*)(H)$. 


As for the second part of the proof, let us assume that $\pi(u_{11})=0$. 
Let us denote $U_{ij}=\pi(u_{ij})$. It is enough to show that $U_{ij}^{\epsilon}
(U_{12}U_{12}^*) = \mbox{const}\cdot (U_{12}U_{12}^*) U_{ij}^{\epsilon}$ for any
$i,j \in \{1,2,3\}$ and $\epsilon\in \{1,*\}$. Indeed, if
$x=(U_{12}U_{12}^*) y\in U_{12}U_{12}^*(H)$, then 
$U_{ij}^{\epsilon} x = U_{ij}^{\epsilon} (U_{12}U_{12}^*) y = \mbox{const}\cdot
(U_{12}U_{12}^*) U_{ij}^{\epsilon} y \in U_{12}U_{12}^*(H)$. The constant can
be 0. 

Due to Lemma \ref{lem_supp_cond_12_21}, we know that
$U_{21}U_{12}=0=U_{12}U_{21}$ and $U_{31}U_{13}=0=U_{13}U_{31}$, and thus, by
\eqref{eq_R_star}, 
$$ U_{22}^* = \frac{E_{213}}{E_{213}} U_{11}U_{33} + \frac{E_{231}}{E_{213}}
U_{13}U_{31}=0, \quad 
U_{33}^* = \frac{E_{312}}{E_{312}} U_{11}U_{22} + \frac{E_{321}}{E_{312}}
U_{12}U_{21}=0.$$
Hence $ U_{kk}(U_{12}U_{12}^*)=0$ and $U_{kk}^*(U_{12}U_{12}^*)=0$ for
$k=1,2,3$. Of course, $U_{21}U_{12}U_{12}^*=0$ and, due to
\eqref{eq_some_row_col_0}, $U_{32}U_{12}U_{12}^* = 0$. Moreover, by
\eqref{eq_some_with_star}, we have $U_{13}^*U_{12} = 0$, which also implies
$$ 
U_{13}U_{12}U_{12}^* = -\frac{\bar{E}_{123}}{\bar{E}_{132}}
U_{12}U_{13}U_{12}^*=0.$$
Now, using \eqref{eq_R_star} again, we check that 
\begin{equation*}
U_{23}U_{12} = \frac{E_{321}}{E_{132}} U_{31}^* =
\frac{E_{321}}{E_{312}}\frac{E_{123}}{E_{132}}U_{12}U_{23}, \quad 
U_{31}U_{12} = \frac{E_{231}}{E_{312}} U_{23}^* =
\frac{E_{231}}{E_{213}}\frac{E_{321}}{E_{312}}U_{12}U_{31}, 
\end{equation*}
and (in the same way)
$$
U_{21}^*U_{12} = 0, \quad U_{31}^*U_{12} =0, \quad 
U_{23}^*U_{12} = \frac{E_{321}}{E_{312}}\frac{E_{231}}{E_{213}} 
U_{12} U_{23}^* ,  \quad 
U_{32}^*U_{12} = \frac{E_{321}}{E_{312}}\frac{E_{231}}{E_{213}} 
U_{12}U_{32}^*.
$$
Combining these relations we find out that $U_{ij}^{\epsilon}
(U_{12}U_{12}^*) = \mbox{const}\cdot (U_{12}U_{12}^*) U_{ij}^{\epsilon}$ for
$i\neq1$ and $j\neq 2$. To finish the proof it remains to check that
$U_{12}^*(U_{12}U_{12}^*)= \mbox{const}\cdot (U_{12}U_{12}^*)U_{12}^*$. But
this follows from 
\begin{eqnarray*}
U_{12}^*U_{12} &=& \frac{E_{231}}{E_{132}} U_{33}
U_{21} U_{12} + \frac{E_{213}}{E_{132}} U_{31}U_{23}U_{12}
= \frac{E_{231}}{E_{123}} U_{31}
\left(\frac{E_{321}}{E_{312}}\frac{E_{123}}{E_{132}}U_{12}U_{23} \right)
\\
&=& 
\frac{E_{231}}{E_{123}} 
\frac{E_{321}}{E_{312}}\frac{E_{123}}{E_{132}}
\left(\frac{E_{231}}{E_{213}}\frac{E_{321}}{E_{312}} U_{12}U_{31}\right) U_{23}
= \left(\frac{E_{321}}{E_{312}}\right)^2 \frac{E_{123}}{E_{132}}
\frac{E_{231}}{E_{213}} U_{12}U_{12}^*.
\end{eqnarray*}
\end{proof}

We are ready to describe all PW-quantum groups with a single pair of
characteristic constants being 1. 
\begin{prop} \label{prop_some}
 The PW-quantum group $\G_b$ related to an array $E$ which satisfies
$\co{2}{3}=1$ and $\co{1}{3}\neq 1$ is isomorphic to $U_q(2)$ or $C(\T^2)$. 
\end{prop}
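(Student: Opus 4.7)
My plan is to reduce to Theorem \ref{thm_uq2_or_t2} by showing that, possibly after a relabelling permitted by Lemma \ref{lem_iso}, the fundamental corepresentation $u$ of $\G_b$ effectively decomposes into a $1$-dimensional and a $2$-dimensional block. I would split the argument on whether the diagonal constants $p_1,p_2,p_3$ of \eqref{eq_const_diagonal} are all equal.

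If the $p_j$'s are not all equal, Theorem \ref{thm_decomp} immediately yields a reduction of $u$. In the case of a $1+2$ decomposition, Theorem \ref{thm_uq2_or_t2} gives the conclusion directly. If all three differ, $u=u_{11}\oplus u_{22}\oplus u_{33}$ is a direct sum of commuting unitaries, and the twisted determinant condition collapses to a single relation of the form $u_{11}u_{22}u_{33}=\text{const}$, so $\A\cong C(\T^2)$.

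The substantive case is $p_1=p_2=p_3$, for which I would analyze the irreducible $*$-representations of $\A$. Given such $\pi$ on a Hilbert space $H$, Lemma \ref{lem_11_inv}(1) together with irreducibility forces $\pi(u_{11}u_{11}^*)\in\{0,I\}$; since $u_{11}$ is a normal partial isometry by Lemma \ref{lem_supp_cond_12_21}, this means $\pi(u_{11})$ is either zero or unitary. In the unitary case, the relation \eqref{eq_some_row_col_0} with $a=1$ gives $\pi(u_{n1})\pi(u_{11})=\pi(u_{11})\pi(u_{1n})=0$, hence $\pi(u_{n1})=\pi(u_{1n})=0$ for $n\neq 1$, so $\pi(u)=\pi(u_{11})\oplus W$ with $W$ a $2\times 2$ block. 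If $\pi(u_{11})=0$, Lemma \ref{lem_11_inv}(2) lets me repeat the argument with $u_{12}$: the row-one range projections $u_{1n}u_{1n}^*$ are pairwise orthogonal by \eqref{eq_some_with_star} and sum to $1$ by unitarity, so exactly one of $\pi(u_{12})$, $\pi(u_{13})$ is unitary, and the column relations \eqref{eq_some_row_col_0} together with \eqref{eq_some_antidiag} force the complementary column entries to vanish. In every case $\pi(u)$ is a $1+2$ block after a permutation of column indices.

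The main obstacle I expect is the final step: promoting this representation-level decomposition to an algebra-level isomorphism. The cleanest route is to show that, across all irreducible representations, the same column permutation $\sigma$ is realized, so that every $\pi$ factors through a common quotient $\tilde\A$ which is itself a PW-quantum group whose fundamental corepresentation has a two-block decomposition; then Lemma \ref{lem_iso} and Theorem \ref{thm_uq2_or_t2} applied to $\tilde\A$ identify $\G_b$ as $U_q(2)$ or $C(\T^2)$. The delicate verification is that the vanishing relations $\pi(u_{n1})=\pi(u_{1n})=0$ (or their permuted analogues) extracted above are actually forced by the universal relations of $\A$, so that the quotient is an isomorphism. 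This should follow by combining (U), (TD), the commutation data of Lemma \ref{lem_supp_cond_12_21}, and a comparison of the induced parameters with those appearing in the characterisation of $U_q(2)$ in Theorem \ref{thm_uq2_or_t2}.
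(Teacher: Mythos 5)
Your overall strategy (analyze irreducible representations, use Lemma \ref{lem_11_inv} to force $\pi(u_{11}u_{11}^*)\in\{0,I\}$, then reduce to Theorem \ref{thm_uq2_or_t2}) starts out parallel to the paper's argument, but there is a genuine gap at the point where $\pi(u_{11})=0$. You treat this case as benign, claiming that the resulting pattern is ``a $1+2$ block after a permutation of column indices.'' It is not: when $\pi(u_{11})=0$, exactly one of $\pi(u_{12}),\pi(u_{13})$ is unitary and the surviving entries form a \emph{monomial} pattern attached to a $3$-cycle (e.g.\ $\pi(u_{12}),\pi(u_{23}),\pi(u_{31})$ unitary, all else zero). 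A column-only relabelling is not covered by Lemma \ref{lem_iso}, which permutes rows and columns simultaneously, and Theorem \ref{thm_uq2_or_t2} needs the off-block generators to vanish in $\A$ itself, not merely in one representation. Worse, such cyclic representations, if they existed, would genuinely obstruct the conclusion --- they are exactly what makes the $A_{p,k,m}(3)$ groups fail to be $U_q(2)$ or $C(\T^2)$. Your closing paragraph correctly identifies that one must show all irreducibles realize the same pattern, but the assertion that ``this should follow by combining (U), (TD), \dots'' is precisely the missing argument, and it cannot be supplied by showing all patterns agree: the correct resolution is to show the cyclic patterns \emph{cannot occur at all}.

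The paper closes this gap by invoking the compatibility condition \eqref{eq_akm_perm_cond} coming from (TD): if the nonzero unitaries sit along the cycle $(231)$, then (in the notation of Remark \ref{rem_notation}) one gets $t=p^2/q$ and $s=r^2$, and combining this with $\co{2}{3}=1$ forces $\co{1}{3}=1$, contradicting the hypothesis; the cycle $(312)$ is excluded the same way. Hence $\pi(u_{11})$ is unitary in \emph{every} irreducible representation, so $u_{1n}=u_{n1}=0$ for $n\neq1$ holds in $\A$ (the direct sum of irreducibles being faithful), and only then does Theorem \ref{thm_uq2_or_t2} apply. This use of the hypothesis $\co{1}{3}\neq1$ inside the representation analysis is the essential step your proposal omits. (Your preliminary split on the diagonal constants $p_1,p_2,p_3$ is harmless but unnecessary.)
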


\begin{proof}
Let $\pi$ be an irreducible representation of the algebra $\A=C(\G_b)$ on a
Hilbert space $H$. According to Lemma \ref{lem_11_inv},
$K:=\pi(u_{11})(H)$ is invariant
and thus (due to irreducibility) must be trivial:
$K=\{0\}$ or $K=H$. If $K=H$ then $\pi(u_{11})$ must be a unitary and the
relation (U) implies that the other terms in the first row and in the first
column vanish: $\pi(u_{12})=\pi(u_{13})=\pi(u_{21})=\pi(u_{31})=0$. 

So let us assume the contrary, that is $K=\{0\}$ or, equivalently,
$\pi(u_{11})=0$. In view of Lemma \ref{lem_11_inv}, part (2), the subspace $L=
\pi(u_{12}u_{12}^*)(H)$ is invariant, hence trivial. If $L=H$, then, by the same
reasoning as above, $\pi(u_{12})$ is unitary and 
$\pi(u_{11})=\pi(u_{13})=\pi(u_{22})=\pi(u_{32})=0$. By
\eqref{eq_some_antidiag}, $\pi(u_{21})=0$ and thus $\pi(u_{23})$ and
$\pi(u_{31})$ must be unitaries. Thus $V_1=\pi(u_{12})$,
$V_2=\pi(u_{23})$ and $V_3=\pi(u_{31})$ are unitaries, and they are the only
non-zero elements in $\pi(u)$. In this case, with the notation
as in Remark \ref{rem_notation}, the compatibility of the twisted determinant
condition (cf.\ \eqref{eq_akm_perm_cond} in the proof of Theorem
\ref{thm_akm_reps}), implies 
that $t=\frac{p^2}{q}$ and $s=r^2$. But then 
$1=\co{2}{3}=\frac{t}{pr}=\frac{p}{qr}$ yields
$\co{1}{3}=\frac{sq}{rp}=\frac{r q}{p}=1$, contrary to the assumption.
Thus we cannot have $L=\pi(u_{12}u_{12}^*)(H)=H$.

Similarly, if $L=\{0\}$ then $\pi(u_{12})=0$ and by the
unitarity relation $\pi(u_{13})$ must be unitary, $\pi(u_{23})$ and
$\pi(u_{33})$ vanish, and by \eqref{eq_some_antidiag}, $\pi(u_{31})$ vanishes
too. Thus $W_1=\pi(u_{13})$, $W_2=\pi(u_{21})$ and $W_3=\pi(u_{32})$ are
unitaries, and they are the only non-zero elements in $\pi(u)$. Again, the
compatibility conditions induced by (TD) contradicts the assumption that
$\co{2}{3}=1$ and $\co{1}{2}\neq 1$. Thus $L=\pi(u_{12}u_{12}^*)(H)$ cannot be
trivial. 

This means that $K=\pi(u_{11}u_{11}^*)(H)=H$, and under every
irreducible representation $\pi$ of $\G$ the matrix $u$ decomposes into two
blocks. Thus, according to Theorem \ref{thm_uq2_or_t2}, $\G\cong U_q(2)$ or
$\G\cong \T^2$. 
\end{proof}

\subsection{Case (3): all characteristic constants equal to 1.}
\label{sec_special}
We are left with the situation when all characteristic
constants equal 1, i.e.\ $\co{l}{n}=1$ for any $n\neq l$. In view of
\eqref{eq_char_const_rel}, it is equivalent to the assumption that $\co{1}{2} =
\co{1}{3}=1. $
Moreover, according to Remark \ref{rem_2block_decomp}, we can restrict our
consideration to the case, where all values on the diagonal of the intertwiner
$P$ are equal, i.e.\ $p_1=p_2=p_3$. 

We first show that these algebraic
conditions on the array $E$ restrict the possible solutions to four cases,
related to four sets of parameters associated to $E_{rki}$, $(r,k,i)\in S_3$.
Next we prove that they describe only two non-isomorphic quantum groups, the
ones defined in Subsections \ref{ssec_ex_s_k} and \ref{ssec_ex_a}. 

\begin{lemma} \label{lem_4_solutions}
If a normalized array $E$ related to a PW-quantum group satisfy 
 \begin{equation} \label{eq_4_solutions}
 p_1=p_2=p_3 \quad \mbox{and} \quad \co{1}{2} = \co{1}{3}=1,   
 \end{equation}
it must take one of the following form: 
\begin{equation*}
\begin{array}{c||c|c|c|c|c|c}
 & E_{123} & E_{132}& E_{213} & E_{231}
& E_{312} & E_{321} \\
 \hline  \hline 
{\rm Case} \, 1 & 
1 & p & p\zeta^{-m} 
& \zeta^m & \zeta^{-m} & p\zeta^m \\  \hline 
{\rm Case} \, 2 & 
1& p & p \zeta^{-m} & |p|^2 \zeta^m & |p|^2 \zeta^{-m} & |p|^2 p \zeta^{m} \\ 
\hline 
{\rm Case} \, 3 & 
1 & p & \frac1{\bar{p}} \zeta^{-m} & \zeta^m & |p|^2 \zeta^{-m} & p\zeta^m \\
\hline 
{\rm Case} \, 4 & 
1 & p & \frac1{\bar{p}}\zeta^{-m} & \frac1{|p|^2} \zeta^m & \zeta^{-m} &
\frac1{\bar{p}} \zeta^m
\end{array}
\end{equation*}
where $p\in \C^*$ and $\zeta=e^{\frac23 \i\pi}$, $m\in \{0,1,2\}$.
\end{lemma}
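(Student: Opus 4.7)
The plan is to translate the hypothesis \eqref{eq_4_solutions} into explicit algebraic constraints on the five parameters $p,q,r,s,t$ of Remark \ref{rem_notation}, and then to solve the resulting system completely. The first step is to compute the two characteristic constants in these coordinates, obtaining $\co{1}{2}=qs/t$ and $\co{1}{3}=qs/(rp)$, so that $\co{1}{2}=\co{1}{3}=1$ collapses into the single chain $t=qs=rp$. This eliminates two unknowns: $t=rp$ and $q=rp/s$, leaving $p$, $r$, $s$ as the free parameters.

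Next I would exploit the diagonal equalities. Substituting $t=rp$ and $q=rp/s$ into \eqref{eq_const_diagonal} reduces the three diagonal constants to
\[p_1=r(1+|p|^2),\qquad p_2=\bar r\bigl(s+|p|^2/\bar s\bigr),\qquad p_3=\bar s+|p|^2|r|^2/s,\]
so that $p_1=p_2$ and $p_1=p_3$ become the master equations
\[r\bar s(1+|p|^2)=\bar r(|p|^2+|s|^2),\qquad rs(1+|p|^2)=|s|^2+|p|^2|r|^2.\]

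The key observation is that each master equation splits naturally into a modulus part and a phase part. Taking absolute values and writing $x=|r|$, $y=|s|$, $z=|p|^2$, the two moduli reduce to $(1-y)(y-z)=0$ and $(x-y)(y-zx)=0$, producing exactly four combinatorial possibilities for $(|s|,|r|)$: namely $(1,1)$, $(|p|^2,|p|^2)$, $(|p|^2,1)$, and $(1,1/|p|^2)$. For the phases, observe that the right-hand side of the first master equation is a positive real multiple of $\bar r$ while the right-hand side of the second is positive real; comparing arguments gives $\arg(s)=2\arg(r)$ and $\arg(r)+\arg(s)\equiv 0\pmod{2\pi}$, which together force $3\arg(r)\equiv 0\pmod{2\pi}$. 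Hence $r=|r|\zeta^m$ and $s=|s|\zeta^{-m}$ for some $m\in\{0,1,2\}$.

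The proof would then conclude by plugging each of the four $(|s|,|r|)$ pairs, together with the phase assignment, into the formulas $q=rp/s$ and $t=rp$ and reading off the six values of $E$; a short case-by-case inspection identifies these with the four rows of the table, in the order Case 1, Case 2, Case 3, Case 4. The main technical obstacle is really just the discipline of the case analysis and the clean factorizations of the modulus equations; the only mild subtlety is verifying that the two phase constraints combine to give the cube-root-of-unity condition on $r$, which is what produces the $\zeta^m$ symmetry that appears uniformly across all four rows of the table.
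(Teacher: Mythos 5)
Your proof is correct and takes essentially the same route as the paper: write everything in the coordinates $p,q,r,s,t$ of Remark \ref{rem_notation}, use $\co{1}{2}=\co{1}{3}=1$ to eliminate two of them, and then solve $p_1=p_2=p_3$ by separating moduli from arguments. The only difference is the choice of which parameters to eliminate (you keep $p,r,s$, the paper keeps $p,q,r$), which yields the cleaner factorizations $(1-|s|)(|s|-|p|^2)=0$ and $(|r|-|s|)(|s|-|p|^2|r|)=0$ and a more transparent derivation of the condition $3\arg(r)\equiv 0 \pmod{2\pi}$; the four resulting parameter sets do match the four rows of the table.
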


\begin{proof}
If we use the notation from Remark \ref{rem_notation}, then
\eqref{eq_4_solutions} rewrites as
\begin{equation*} 
t=pr, \qquad s=\frac{pr}{q} \quad \mbox{and} \quad 
r+\bar{p}t= \bar{q}p+ \bar{r}s= \bar{s}+ \bar{t}q.
\end{equation*}
Substitution of $t$ and $s$ yields
\begin{equation} \label{eq_all_relations}
\frac{1+|p|^2}{ p}= \frac{|q|^2+ |r|^2}{rq} 
\quad \mbox{and}\quad  
\frac{|q|^2+ |r|^2}{rq} = \frac{r}{\bar{r}} \frac{1+ |q|^2}{q},
\end{equation}
and the second relation solved with respect to $|q|^2$ gives 
$$|q|^2 (r^2-\bar{r}) = \bar{r}|r|^2-r^2.$$
When $r^2=\bar{r}$, then necessarily $|r|=1$ ($r\neq 0$) and the relation is
satisfied. In fact, one easily checks that 
$|r|=1$ if and only if $r^2=\bar{r}$. On the other hand, if $|r|\neq 1$, then 
\begin{equation*} 
|q|^2 = \frac{\bar{r}|r|^2-r^2}{r^2-\bar{r}}. 
\end{equation*}
Checking when, for a complex $r$, one gets a real (and positive) term on the
right-hand side of the expression above, we find out that it must take the form
$r=2a \zeta^m$ with $m\in \{0,1,2\}$, $a>0$ and $a\neq \frac12$.
Then $|q|^2 = 2a=|r|$. 
Moreover, taking the absolute value of the first relation in
\eqref{eq_all_relations}, we
find out that in both cases ($|r|=1$ and $|r|=|q|^2$) we have
$|q|(1+|p|^2)= |p|(1+ |q|^2)$.
Therefore $|p||q|^2-(1+|p|^2)|q|+ |p|=0 $, and so $|p|=|q|$ or
$|p|=\frac1{|q|}$.
Finally, setting $p=|p| e^{\i \phi}$, $q=|q|e^{\i \xi}$, $r=|r|e^{\i \theta}$ we
find out from \eqref{eq_all_relations} that $e^{\i \xi}= e^{\i (\phi-\theta)}$.

Summing up we have the following solutions to the set of equations
\eqref{eq_4_solutions}:
Case 1 for $|p|=|q|$ and $|r|=1$, Case 2 when $|p|=|q|$ and $|r|=|q|^2$, Case
3 if $|p|=\frac1{|q|}$ and $|r|=1$, and Case 4 when $|p|=\frac1{|q|}$ and
$|r|=|q|^2$.
\end{proof}

Let us denote by $\G_k$ ($k=1,2,3,4$) the Woronowicz quantum group related to
the constants given by the Case $k$ from the table above. Note that 
$\G_1=A_{p,-m,m}(3)$ and $\G_2=SU_{p,m}(3)$, cf.\ Definitions \ref{def_akm} and
\ref{def_su_k}. The next result shows that these two families are all what we
can get from Woronowicz construction (under the permutation condition) when all
$\co{n}{l}=1$ for $n\neq l$. 

\begin{prop} \label{prop_su_isom}
 We have the following isomorphisms: 
 $$ \G_3 \cong SU_{\bar{p}\zeta^{-m},m}(3), \qquad \G_4
\cong SU_{\frac1{p},m}(3).$$
\end{prop}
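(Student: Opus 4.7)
\textbf{Proof plan for Proposition \ref{prop_su_isom}.}

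The approach is to apply the Isomorphism Lemma (Lemma \ref{lem_iso}) directly. Recall that this lemma tells us that if two normalized arrays $E$, $\tilde{E}$ satisfy
$E_{\sigma(i_1),\sigma(i_2),\sigma(i_3)} = c\,\tilde{E}_{i_1,i_2,i_3}$
for some $\sigma\in S_3$ and $c\in \C^*$ and all $(i_1,i_2,i_3)\in S_3$, then the associated PW-quantum groups are isomorphic. So for each of $\G_3$ and $\G_4$ the task reduces to producing an explicit pair $(\sigma,c)$ that matches the Case 3 (resp. Case 4) array of Lemma \ref{lem_4_solutions} with the Case 2 array corresponding to the claimed $SU_{q,m}(3)$.

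For $\G_3$ I would try the transposition $\sigma=(12)$ together with the scalar $c=\frac{1}{\bar{p}}\zeta^{-m}$. The value of $c$ is forced by the normalization $\tilde{E}_{123}=1$ since, under $\sigma=(12)$, the triple $(1,2,3)$ is mapped to $(2,1,3)$ and the Case 3 array gives $E_{213}=\frac{1}{\bar{p}}\zeta^{-m}$. One then checks the remaining five permutations: for instance $(1,3,2)\mapsto(2,3,1)$ gives $c^{-1}E_{231}=\bar{p}\zeta^m\cdot\zeta^m=\bar{p}\zeta^{-m}$, which is the value of $\tilde{E}_{132}$ for $SU_{q,m}(3)$ with $q=\bar{p}\zeta^{-m}$; similar one-line computations confirm the entries $\tilde{E}_{213}=q\zeta^{-m}$, $\tilde{E}_{231}=|q|^2\zeta^m$, $\tilde{E}_{312}=|q|^2\zeta^{-m}$ and $\tilde{E}_{321}=|q|^2 q\zeta^m$, using $|q|^2=|p|^2$ and $\zeta^{2m}=\zeta^{-m}$ throughout.

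For $\G_4$ the analogous strategy works with the transposition $\sigma=(23)$ and scalar $c=p$. The normalization is immediate because $(1,2,3)\mapsto(1,3,2)$ sends the top entry to $E_{132}=p$, and then, writing $q=1/p$, one verifies $c^{-1}E_{\sigma(i_1),\sigma(i_2),\sigma(i_3)} = \tilde{E}_{i_1,i_2,i_3}$ for each of the remaining five permutations; the key relations $|q|^2=1/|p|^2$ and $1/\bar{p}=q\,(|p|^2/\bar p\cdot 1/p)$ (equivalently $1/\bar p=q|p|^2\cdot\bar p^{-1}/\bar p$ in a direct form) make the entries match those of $SU_{1/p,m}(3)$.

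The main (only) obstacle is guessing the correct pair $(\sigma,c)$; once chosen, the normalization fixes $c$ and the verification reduces to six elementary identities in the sixth roots of unity, nothing more. No further structural machinery is needed, since Lemma \ref{lem_iso} converts the statement into purely combinatorial checks on the arrays of Lemma \ref{lem_4_solutions}.
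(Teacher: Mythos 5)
Your proposal is correct and is essentially the paper's own proof: the paper likewise invokes Lemma \ref{lem_iso} with the permutations $(213)$ and $(132)$ (your $(1\,2)$ and $(2\,3)$ in cycle notation) and a suitable scalar, leaving the six entry checks to the reader. Your scalars $c=\tfrac{1}{\bar p}\zeta^{-m}$ and $c=p$ are the reciprocals of the paper's because you take the Case~3/Case~4 array as $E$ rather than as $\tilde E$ in the lemma; with that convention your entry-by-entry verification is right (the only blemish is the garbled parenthetical identity for $\G_4$, where the relation actually used is $p/|p|^2=1/\bar p$).
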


\begin{proof}
For $\G_3$ take $\sigma=(213)$ and $c=\bar{p}\zeta^{-m}$. For
$\G_4$ take $\sigma=(132)$ and $c=\frac1{p}$. Then check that Lemma
\ref{lem_iso} is satisfied with the these permutations and constants. 
\end{proof}

\subsection{Proof of Classification Theorem}
 We sum up the results described throughout this Section to provide a complete
proof of the Theorem \ref{thm_classification}. Let $\G$ be a PW-quantum group
related to a (normalized) array $E$. 

If the diagonal constants $p_j$'s defined
by \eqref{eq_const_diagonal}, are pairwise different, then according to
Theorem \ref{thm_decomp} $u=u_{11}\oplus u_{22}\oplus u_{33}$. By the
unitarity condition (U), $u_{jj}$ is unitary and the twisted determinant
condition (TD) shows that the unitaries commute. Moreover,
$u_{33}=u_{11}^*u_{22}^*$. This means that $\G\cong C(\T^2)$. 

If $p_i=p_r\neq p_k$, then - again by Theorem \ref{thm_decomp} - $u$ decomposes
into two blocks: $1\times 1$ and $2\times 2$. In such a case we can apply
Theorem \ref{thm_uq2_or_t2} to conclude that $\G\cong C(\T^2)$ or $\G\cong
U_\mu(2)$ with $\mu = -\frac{E_{kri}}{E_{kir}}$. 

Now, we can assume that $p_1=p_2=p_3$ and focus on the characteristic
constants. If all of them are different from 1, then according to Proposition
\ref{prop_none_kac_cases}, $\G$ is isomorphic to one of $A_{k,m}(3)$, $U_q(2)$
or $C(\T^2)$. On the other hand, if exactly two characteristic constants equal
1, then, in view of Lemma \ref{lem_iso} and Proposition \ref{prop_some}, $\G$
must be isomorphic to $U_q(2)$ or $C(\T^2)$. 

Finally, it remains to identify the case when $p_1=p_2=p_3$ and $\co{n}{l}=1$
for any $n\neq l$. Then using Lemma \ref{lem_4_solutions} and Proposition
\ref{prop_su_isom} we conclude that $\G$ can be isomorphic to either
$A_{p,-m,m}(3)$ or to $SU_{p,m}(3)$, which ends the proof. 

\section{Final remarks and open problems}
\label{sec_final}

Since the objective of the paper was only to establish a list of PW-quantum
groups and find out which of them are non-trivial, thus the results included in
the paper do not provide a complete description of the structures of 
$SU_{p,m}(3)$ or $A_{p,k,m}(3)$. Both families require separate studies
(description of representations and corepresentations, for instance), which
will be dealt with in a forthcoming paper. 

Some elements in both families can still be isomorphic. In particular, Theorem
\ref{thm_akm_reps} suggests that, in case of $k+m\not\in 3\N$, there might be a
symmetry of $A_{p,k,m}(3)$ with respect to the change $(k,m) \leftrightarrow
(m,k)$. Similarly, for $m\in \{0,1,2\}$ we get three families of quantum groups
$SU_{p,m}(3)$, each one parametrized by $p$. It is still unclear whether
$SU_{p,m_1}(3)$ is isomorphic to $SU_{p,m_1}(3)$ for $m_1\neq m_2$. 

Our results suggest that only very special quantum groups may come from
Woronowicz construction with an array $E$ satisfying the permutation condition.
The natural question arises whether it is possible to provide a general
description of PW-quantum groups for arbitrary dimension.  

It is also an interesting problem to study what are the quantum groups coming
from Woronowicz construction for an array $E$ without the restriction to the
permutation condition. This problem is open even for $N=2$.

\section*{Acknowledgements}
The author wishes to thank Profesor S.\ L.\ Woronowicz and Profesor M.\
Bo\.{z}ejko for very stimulating disscussions. 

The author was supported by the Polish National Science Center
PostDoctoral Fellowship No. 2012/04/S/ST1/00102 while working on this paper. 

\end{document}